\let\eps=\varepsilon
\def\cI{{\mathcal I}}
\def\cT{{\mathcal T}}
\def\R{{\mathbb R}}
\def\Z{{\mathbb Z}}
\def\bS{{\mathbb S}}
\def\virgp{\raise 2pt\hbox{,}}
\def\cdotpv{\raise 2pt\hbox{;}}
\def\div{ \hbox{\rm div}\,  }
\def\curl{ \hbox{\rm curl}\,  }
\def\ddj{\dot \Delta_j}
\def\na{\nabla}
\def\lam{\lambda}
\def\pa{\partial}
\def\f{\frac}
\def\eqdefa{\buildrel\hbox{\footnotesize def}\over =}
\newtheorem{thm}{Theorem}[section]
\newtheorem{lem}{Lemma}[section]
\newtheorem{rmk}{Remark}[section]
\newtheorem{col}{Corollary}[section]
\newtheorem{prop}{Proposition}[section]
\newcommand{\ben}{\begin{eqnarray}}
\newcommand{\een}{\end{eqnarray}}
\newcommand{\beno}{\begin{eqnarray*}}
\newcommand{\eeno}{\end{eqnarray*}}
\begin{document}
\title[Stability of large solutions for full Navier-Stokes system]
{Stability of large solutions for full compressible Navier-Stokes equations in the whole spaces}

\author[L. He]{Lingbing He}
\address[L. He]{Department of Mathematical Sciences, Tsinghua University\\
Beijing 100084,  P. R.  China.} \email{lbhe@math.tsinghua.edu.cn}
\author[J. Huang]{Jingchi Huang}
\address[J. Huang]{ School of Mathematics, Sun Yat-sen University, Guangzhou Guangdong 510275, P. R. China. } \email{huangjch25@mail.sysu.edu.cn}
 \author[C. Wang]{Chao Wang}
\address[C. Wang]{ School of Mathematical Sciences, Peking University\\
 Beijing 100871,  P. R.  China.} \email{wangchao@math.pku.edu.cn }

\begin{abstract}
The current paper is devoted to the investigation of the global-in-time stability of large solutions for the full Navier-Stokes-Fourier system in the whole space. Suppose that the density and the temperature are bounded from above uniformly in time in the Holder space $C^\alpha$ with $\alpha$ sufficiently small and in $L^\infty$ space respectively. Then we prove two results:
(1). Such kind of the solution  will converge to its associated equilibrium with a rate which is the same as that for the heat equation if we impose the same condition on the initial data. As a result, we obtain the propagation of  positive lower bounds of the density and the temperature.
(2). Such kind of the solution is stable, that is,  any perturbed solution will remain close to the reference solution if initially they are close to each other. This shows that the set of the smooth and bounded solutions is open.

\end{abstract}
 \maketitle
\noindent {\sl Keywords: full compressible Navier-Stokes system; large solutions; stability}

\section{introduction}
The motion of the compressible viscous and heat-conductive gases is governed by the full Navier-Stokes-Fourier system:
\begin{equation}\label{cns}
 \left\{\begin{array}{l}
\partial_t\rho+\div(\rho  u )=0,\\[0.5ex]\displaystyle
\partial_t(\rho  u)+\div (\rho  u \otimes u )
+\nabla P(\rho,T)=\div \bS (u),\\
\pa_t(\rho E)+\div(\rho Eu)+\div(P(\rho,T)u)=\div\bigl(\bS(u)u\bigr)+\Delta T,\\
 \lim\limits_{|x|\rightarrow \infty} (\rho,u,T)=(1,0,1),
\end{array}
\right.
\end{equation}
where $(\rho,u,T)$ are the  density, the velocity field and the absolute temperature of the fluid, respectively. In the present work, we only consider the perfect heat conducting and viscous gases. In this case, the pressure $P(\rho, T)$ is given by $P=\rho T.$ $E$ denotes total energy, where $E=T+\f{|u|^2}2.$ Here $\bS(u)$ is the stress tensor, given by
$$\bS(u)=\mu\bigl(\na u+(\na u)'\bigr)+\lambda\div u \cI_3,$$x`
where $\cI_3$ is a $3\times 3$ unit matrix, and $A'$ means the transpose of matrix $A.$ $\mu$ and $\lambda$ are the coefficients of viscosity, which are assumed to be constants, satisfying the following physical restrictions:
$\mu>0$ and $2\mu+3\lambda\geq0.$

If the solutions are regular enough (such as strong solutions), \eqref{cns} is equivalence to the following system which is very useful in the proofs of the main theorems:
\begin{equation}\label{fcns}
 \left\{\begin{array}{l}
\partial_t\rho+\div(\rho  u )=0,\\[0.5ex]\displaystyle
\partial_t(\rho  u)+\div (\rho  u \otimes u )
+\nabla P=\mu\Delta u+(\mu+\lambda)\na\div u,\\
\pa_t(\rho T)+\div(\rho Tu)+\rho T\div u=\f{\mu}2|\na u+(\na u)'|^2+\lambda(\div u)^2+\Delta T.\\
 \end{array}
\right.
\end{equation}

\smallskip
In the present work, we are interested in the following two problems for the system \eqref{cns}:
(i). What is the long-time behavior of the solution   to   \eqref{cns} ?
(ii). Which kind of the solution to  \eqref{cns} is stable?

Obviously these two problems are fundamental for \eqref{cns}. However both of them are not solved well. The main obstruction comes from the existence of global smooth solution. So far, all the  results are restricted to the perturbation framework. In other words, the global solution is constructed near the equilibrium.  We refer readers to  \cite{MN1, MN2,Huang2} for the results in Sobolev space and to \cite{Dan-ARMA} in Critical Besov spaces.
Because of this restriction, the method on the global dynamics and the stability of \eqref{cns}  relies heavily on the analysis of the linearization of the system. We refer readers to  \cite{DX,Kobayashi1,Kobayashi2,Kobayashi3,Kobayashi4} and reference therein for details.  These results can be summarized as follows. Assume that the initial data $(\rho_0, u_0,T_0)$ is a small perturbation of equilibrium $(1, 0,1)$ in $L^1(\R^3)\times H^3(\R^3)\times H^3(\R^3).$ Then we can prove that \ben\label{decay}
\|(\rho-1, u,T-1)(t)\|_{L^2}\leq C(1+t) ^{-\f34}.
\een
This shows that in the close-to-equilibrium setting the   rate of the convergence of  the solution is the same as that for the heat equations if we put the same condition on the initial data. In this sense, \eqref{decay} can be regarded as  the optimal decay estimate for system \eqref{cns}. Finally we mention the work \cite{ZZ} on convergence to equilibruim for the full Navier-Stokes-Fourier system in the bounded domain.
\smallskip

The aim of the paper is to investigate the long-time behavior and the global-in-time stability of the solution to \eqref{cns} for the general initial data. To do that, we need to impose some assumptions on the solution itself which looks unsatisfactory. But our results are still interesting in the following sense:
\smallskip

(1). From the point of view of global dynamics, we disclose the stabilized mechanism for the system \eqref{cns} without the restriction that the solution is in the close-to-equilibrium setting. As a result, we obtain the long-time behavior of the solution   and the propagation of the lower bounds of the density and the temperature.

\smallskip
(2). Technically we introduce a new method to deal with the parabolic-hyperbolic system to catch the full dissipation. Compared to the standard linearization method, the new ingredients come from the basic energy identity and the coupling effect behind the system. The key observation is as follows. The basic energy identity shows that the system has the dissipation structure which is not complete. In the case of full Navier-Stokes-Fourier system, there is no dissipation for the density. However the coupling effect behind system help us to obtain the dissipation for the density. Finally the system will look like a heat equation. By the key observation that there exists some cancellation structure which helps us to get the control for the low frequency part of the solution, time-frequency splitting method can be applied to obtain the global dynamics: the propagation of the smoothness and the convergence to the equilibrium with the rate as the same as the result obtained by the linearization method.

\smallskip

To explain our strategy clearly, let us give a short review on \cite{HHW} where we work on the isentropic case. The proof relies on the uniform-in-time bounds for the propagation of the regularity and the dissipation inequality for the system. Roughly speaking, starting from the basic energy identity and the assumptions on the solution itself, in the first step, we want to get the uniform-in-time bounds for the high regularity for the solution. Then in the second step, by using the result built in the first step, we derive   the dissipation inequality for the system which helps to consider the longtime behavior of the solution and the propagation of the lower bounds for the density. Thanks to the interplay between these two steps, finally we obtain the global dynamics for the compressible Navier-Stokes equations.

\smallskip

Inspired by the isentropic case, to get the stability of full Navier-Stokes system, the first step is to get the uniform-in-time bounds for the propagation of the regularity. Because of thermal energy equation, we need to involve some new methods which come from the corresponding blow-up results (see \cite{SWZ2,wenzhu}). Firstly, because of the different definition of the effective viscosity flux $G$ which contains temperature now, some new terms from the $\eqref{fcns}_3$ come out(see Lemma \ref{energylem4}). To estimate them, we not only need to copy some argument from  isentropic case, but also need to use the coupling effect of the system. Secondly, the linearization of $\eqref{fcns}_3$ also brings some difficulties which need new idea to deal with. The biggest enemy comes from the term $\rho T\div u.$ Since the density and the temperature are not integrable, we have to rewrite the system  $\eqref{fcns}$ around the equilibrium $(1,0,1)$. Then we derive that $\rho T\div u=(\rho T-1)\div u +\div u$. Obviously the linear term $\div u$ will bring the trouble to obtain the uniform-in-time estimates for the solution. To deal with all the terms involved with this linear term in energy estimates, our key idea is making full use of effective viscosity flux, the cancellation of the system  and also the basic energy identity(see Lemma \ref{energylem3}-\ref{energylem5} in the after).




\bigskip

  Before we state our results, let us introduce the notations which are used throughout the paper. We use the notation $a\sim b$ whenever $a\le C_1 b$ and $b\le C_2
a$ where $C_1$ and $C_2$ are universal constants.  We denote $C(\lambda_1,\lambda_2,\cdots, \lambda_n)$ by a constant depending on parameters $\lambda_1,\lambda_2,\cdots, \lambda_n$.

\smallbreak

  Now we are in a position to state our main results on the system \eqref{cns}. Our first result is concerned with  the global dynamics of the equation \eqref{cns}.
\begin{thm}\label{thm:main1}
Let $\mu>\frac12 \lambda,$ and $(\rho,u,T)$ be a global and smooth  solution of \eqref{cns} with initial data $(\rho_0,u_0,T_0)$ where $\rho_0\geq c>0,$ $T_0\geq c>0,$ and satisfying the admissible condition  holds:
\begin{equation}\label{admissible}
\begin{split}
&u_t|_{t=0}=-u_0\cdot\na u_0+\f1{\rho_0}\div\bigl(\bS(u_0)\bigr)-\f1{\rho_0}\na (\rho_0T_0),\\
&T_t|_{t=0}=-u_0\cdot\na T_0-T_0\div u_0+\f1{\rho_0}\bS(u_0):\na u_0+\f1{\rho_0}\Delta T_0.
\end{split}
\end{equation}
Assume that $(\rho, T)$ satisfies that
\ben\label{Ass1}
&&\| \rho,  T\|_{L^\infty} \leq M_1,\label{Ass1}\\
&&\|\rho\|_{C^\alpha}\leq M_2,\label{Ass2}
\een
where  $\alpha$ is a positive number. Denote that $a\eqdefa \rho-1,$ $\theta\eqdefa T-1,$ then if $(a_0, u_0)\in L^1(\R^3)\cap H^2(\R^3)$ and $\theta_0\in L^1(\R^3)\cap H^1(\R^3),$ we have
\begin{enumerate}
\item{(Propagation of the lower bounds of the density and the temperature)} There exist two constants $c_1$ and $c_2$ depending on $c$ and $M$ such that
\beno \rho(t,x)\ge c_1>0,\quad T(t,x)\ge c_2>0. \eeno
\item{(Uniform-in-time bounds for the regularity)}
\beno \|a,u\|_{L^\infty_t H^2}^2+\|\theta\|_{L^\infty_t H^1}^2+\int_0^\infty \|a,\na u\|_{ H^2}^2+\|\na\theta\|_{ H^1}^2\,d\tau\le C(M, \|a_0\|_{L^1\cap H^2},\|u_0\|_{L^1\cap H^2},\|\theta_0\|_{L^1\cap H^1} ),\eeno
\item{(Longtime behavior of the solution)}
\begin{equation}\label{decayest}
\|u(t)\|_{H^1} + \|a(t)\|_{H^1}+ \|\theta(t)\|_{H^1} \leq C( M, \|a_0\|_{L^1\cap H^1},\|u_0\|_{L^1\cap H^2},\|\theta_0\|_{L^1\cap H^1} )(1+t)^{-\f34},\end{equation}
where $M=\max\{ M_1, M_2\}$.
\end{enumerate}
 \end{thm}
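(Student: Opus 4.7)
The plan is to run a continuity/bootstrap argument on a maximal interval where the desired $H^2$-type bounds hold with some suitable constants, and then close them strictly using the structural hypotheses \eqref{Ass1}--\eqref{Ass2}. The uniform $L^\infty$ control of $(\rho,T)$ keeps all state functions such as $1/\rho$ and $P$ comparable, while the $C^\alpha$ bound on $\rho$ with $\alpha$ small is what permits Hölder control of $1/\rho$ and the commutation of $\rho$ past singular integrals, so that the ``effective viscous flux'' technology of Hoff can be run in spite of the lack of Sobolev regularity of $\rho$. Items (1)--(3) are then obtained simultaneously: (2) from direct energy propagation, (1) from (2) via characteristics, and (3) from (2) together with a Schonbek-type time-frequency splitting.

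The first main step is to propagate $H^2$-regularity of $(a,u)$ and $H^1$-regularity of $\theta$. I would start from the basic total-energy identity of \eqref{cns}, giving $L^\infty_t L^2$ control of $(\sqrt{\rho}\,u,\theta)$ and $L^2_tL^2$ dissipation of $\na u$, and then upgrade to higher order via the effective viscous flux
\[
G := (2\mu+\lambda)\div u - (\rho T - 1),
\]
which satisfies the elliptic equation $\Delta G = \div(\rho\dot u)$. The delicate new ingredient compared with the isentropic case \cite{HHW} is the term $\rho T\div u = (\rho T - 1)\div u + \div u$ in the thermal equation, as flagged in the introduction. The linear residual $\div u$ cannot be absorbed directly; one has to pair it against $G$ rather than against $\div u$ itself so that it combines with the kinetic dissipation into a coercive quadratic form. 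This is the role of Lemmas \ref{energylem3}--\ref{energylem5} announced earlier.

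The second main step is a dissipation inequality that turns the transport-type equation for $a$ into a dissipative one via the coupling. Solving the definition of $G$ for $\div u$ and substituting into the continuity equation gives, schematically,
\[
(2\mu+\lambda)\bigl(\pa_t a + u\cdot\na a\bigr) + \rho\bigl(\rho T - 1\bigr) = -\rho G.
\]
Multiplying by $a$ and integrating yields $\|a\|_{L^2}^2$ dissipation because $\rho(\rho T - 1)$ expands around the equilibrium as $\rho\theta + a + O(a^2+\theta^2)$, producing a coercive term in $a$; the cross term in $\theta$ is absorbed by the direct heat-type dissipation $\|\na\theta\|_{L^2_t L^2}^2$ coming from the $\Delta T$ contribution in $\eqref{fcns}_3$. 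Combined with Step 1, this delivers the full dissipation inequality for $(a,\na u,\na\theta)$ stated in item (2). The positive lower bounds in (1) then follow from integrating $D_t\log\rho = -\div u$ along characteristics using the just-obtained $L^1_tL^\infty$ control of $\div u$, and similarly for $T$ using the propagation of $\|\theta\|_{H^1}$.

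For the decay rate $(1+t)^{-3/4}$ of item (3), I would invoke the time-frequency splitting in the spirit of Schonbek. On low frequencies $|\xi|\le R(t)\sim(1+t)^{-1/2}$ the hypothesis $U_0=(a_0,u_0,\theta_0)\in L^1$ gives $\|\widehat{U}_0\|_{L^\infty}\lesssim \|U_0\|_{L^1}$, and the volume of the ball of radius $R(t)$ yields the heat-kernel factor $R(t)^{3/2}\sim(1+t)^{-3/4}$; on high frequencies the dissipation from Step 2 gives exponential-type damping. The subtlety is that the linearized $(a,u,\theta)$ system contains the hyperbolic coupling $\na a$ in the momentum equation, so one cannot simply read off heat-like decay from the symbol; the ``cancellation structure'' alluded to in the introduction is precisely a low-frequency symmetrization making the linear Green function behave like that of the heat semigroup. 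Duhamel applied with this Green function, together with the bilinear estimates controlled by Step 1, closes the target rate.

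The hardest part, I expect, will be the simultaneous closure of the linear $\div u$ term in the thermal equation and the coupled $(a,\theta)$ dissipation at high order. In \cite{HHW} the pressure $P=\rho^\gamma$ makes the pressure-mass $\rho(\rho^\gamma-1)\sim a^2$ coercive in $a$ alone; here $P=\rho T$ forces $a$ and $\theta$ to be estimated jointly at every order and produces commutators $[\na,1/\rho]$ acting on $\theta$ whose control relies essentially on $\|\rho\|_{C^\alpha}\le M_2$. This is what pins down the smallness requirement on $\alpha$: it keeps the relevant paraproduct and composition constants tame so that all the bootstrap estimates close uniformly in time.
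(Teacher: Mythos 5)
Your proposal has several genuine gaps where the argument as described would not close. The most serious is your Step 2: multiplying the rewritten continuity equation by $a$ and claiming $\|a\|_{L^2}^2$ dissipation cannot work, because the resulting source terms are controlled by $\|G\|_{L^2}+\|\theta\|_{L^2}$ (and, through $G=(2\mu+\lam)\div u-(P-1)$, by $\|a\|_{L^2}$ itself), and none of these zero-order quantities is ever dissipated --- only $\na G$, $\na\theta$, $\na u$ are. The paper's way out is to do this estimate at the $L^6$ level (multiplier $|a|^4a$, see \eqref{est:a}--\eqref{est:a1}), so that Sobolev embedding converts the source into $\|\na G\|_{L^2}+\|\na\theta\|_{L^2}$, which are dissipated; the $L^2$ level of $(a,u,\theta)$ is never dissipated, and this is precisely why the decay in item (3) must go through the low-frequency Fourier estimate of Lemma~\ref{conlf} on the nonlinear system in conservative variables $(a,\rho u,\rho E_1)$, where the dangerous linear flux terms cancel exactly in the Fourier energy identity, followed by an iterated upgrade of the rate ($(1+t)^{-1/4}$, then $(1+t)^{-3/4}\log^{1/2}$, then $(1+t)^{-3/4}$). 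Your alternative route --- a low-frequency symmetrization of the linearized system, its Green function and Duhamel --- is exactly the close-to-equilibrium machinery, and you give no reason why the bilinear Duhamel terms can be closed for large solutions without smallness. Relatedly, the linear $\div u$ terms in the first-order estimates are not handled in the paper by ``pairing against $G$ to form a coercive quadratic form'': Lemmas~\ref{energylem3}--\ref{energylem5} absorb them into exact time derivatives of entropy-like functionals $f(a)$, $F(a)$, $H(a,\theta)$, which is what makes a Gronwall closure with time-integrable coefficients possible (no smallness-based continuity argument is available for large data).

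The second cluster of gaps concerns item (1) and the role of the H\"older hypothesis. For the density you invoke ``$L^1_tL^\infty$ control of $\div u$'', but the estimates only yield $\na u\in L^2((0,\infty);L^\infty)$, so characteristics alone give $\rho\ge c\,e^{-Ct^{1/2}}$, which degenerates as $t\to\infty$; the paper must combine this with $\|a(t)\|_{L^6}\to0$ and the $C^\alpha$ bound to conclude $\|a(t)\|_{L^\infty}\to0$ and hence a uniform lower bound. For the temperature, ``similarly via characteristics'' fails outright: $T$ obeys a parabolic equation with the sign-indefinite source $-T\div u$, and the paper's argument (Corollary on the lower bounds) instead estimates $f_+=(1/T-1)_+$ in $L^p$ uniformly in $p$, accepts growth in time, and compensates by proving large-time smallness of $\theta$ in $L^\infty$ via uniform bounds on $\theta_t$ and $\na^2\theta$; nothing in your outline produces this. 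Finally, the hypothesis $\|\rho\|_{C^\alpha}\le M_2$ is not used for commutators $[\na,1/\rho]$, paraproducts, or commuting $\rho$ past singular integrals, and no smallness of $\alpha$ is required in Theorem~\ref{thm:main1} ($0<\alpha<1$ suffices): its only roles are the interpolation $\|\na\Lambda^{-1}a\|_{L^\infty}\lesssim\|a\|_{L^6}^{\beta}\|a\|_{C^\alpha}^{1-\beta}$, which bounds $\|\na u\|_{L^\infty}$ through the decomposition of $\na u$ into $\curl u$, $G$ and the (merely H\"older) pressure part, and the upgrade of $\|a\|_{L^6}\to0$ to $\|a\|_{L^\infty}\to0$ just mentioned. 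As written, your plan would look for smallness of $\alpha$ to ``tame constants'' that the actual proof never needs, while missing the steps ($L^6$-based dissipation of $a$, the Fourier-level cancellation, the parabolic lower-bound argument for $T$) that the proof does need.
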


\bigskip

\begin{rmk} It shows that the upper bounds of the density and the temperature can control the propagation of   positive lower bounds of the density and the temperature.  \end{rmk}

\begin{rmk}    \eqref{decayest}  shows that our decay estimate  is comparable to  \eqref{decay} obtained in the perturbation framework. In fact, if we assume that the initial data is in $L^p$ with $p\in[0,1]$, then the decay estimate will turn to be
$$\|u(t)\|_{H^1} + \|a(t)\|_{H^1}+ \|\theta(t)\|_{H^1} \leq C( M, \|a_0\|_{L^1\cap H^1},\|u_0\|_{L^1\cap H^2},\|\theta_0\|_{L^1\cap H^1} )(1+t)^{-\f34(\f{2}{p}-1)},$$
which is exactly as the same as that for the heat equation.
\end{rmk}	

\begin{rmk}  \eqref{Ass1} and \eqref{Ass2} are always satisfied in the perturbation framework. This shows that our assumptions are reasonable.
\end{rmk}

 \bigskip

Next we are in the position to state our global-in-time stability result for the system \eqref{cns}.
\begin{thm}\label{thm:main2}
Let $(\bar\rho, \bar u,\bar T)$ be a global and smooth solution for the system \eqref{cns} with the initial data $(\bar \rho_0, \bar u_0,\bar T_0)$ verifying that   \ben\label{assm}
  \|\f{1}{\bar \rho}, \bar \rho, \na \bar\rho \|_{ L^\infty_t( H^s) }+\|\bar u, \bar u_t,\bar T, \bar {T}_t\|_{   L^\infty_t(  H^s)}+\|\na \bar u,\na \bar T\|_{  L^\infty_t(  H^s)}\leq C,
  \een
  where $s>\f32$. Assume that $(\bar\rho_0-1, \bar u_0, \bar{T}_0-1)\in L^1(\R^3)\cap H^s(\R^3)$ satisfying the assumptions of Theorem \ref{thm:main1}. There exists a $\eps_0=\eps_0(C)$ depending only on $C$ such that for any $0<\eps\leq\eps_0$, if
\ben\label{assm1}
\|\rho_0-\bar{\rho}_0\|  _{ H^s }+\|  u_0-\bar{u}_0 \|_{   H^s } +\|T_0-\bar{T}_0\|_{   H^s }  \leq \eps,
\een
then \eqref{cns} admits a global  and unique solution $(\rho, u,T)$ with the initial data $(\rho_0, u_0,T_0).$ Moreover, for any $t>0$,
\beno
\|\rho-\bar{\rho}\|  _{ H^s }+\|  u-\bar{u} \|_{   H^s } +\|T-\bar{T}\|_{   H^s }  \lesssim \min\{(1+\delta|\ln\eps|)^{-\f34}, (1+t)^{-\f34}+\epsilon\},  \eeno
where  $\delta$ is a constant independent of $\eps.$ \end{thm}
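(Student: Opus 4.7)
The plan is to work with the perturbation
\[
(a,v,\vartheta) \eqdefa (\rho-\bar\rho,\, u-\bar u,\, T-\bar T),
\]
subtracting the reference system from \eqref{fcns} to obtain a quasilinear system for $(a,v,\vartheta)$ whose source terms are built from products of the perturbation with $(\bar\rho,\bar u,\bar T)$ and their derivatives, plus nonlinear self-interactions of the perturbation. Since \eqref{fcns} is a symmetric hyperbolic–parabolic system, local existence of $(\rho,u,T)$ is standard once $(\rho_0,u_0,T_0)$ is $H^s$-close to $(\bar\rho_0,\bar u_0,\bar T_0)$, and the usual continuation criterion reduces the problem to a global-in-time $H^s$ bound on $(a,v,\vartheta)$.

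Next I would run $H^s$ energy estimates on $(a,v,\vartheta)$. Using the uniform bound \eqref{assm} on the reference solution together with commutator and product estimates in $H^s$, the energy $E(t)\eqdefa \|a,v,\vartheta\|_{H^s}^2$ satisfies a differential inequality of the form
\[
\tfrac{d}{dt} E(t) \;\leq\; C\,\bigl(1+\|\bar u,\bar\rho-1,\bar T-1\|_{W^{1,\infty}}\bigr) E(t) + C\, E(t)^{3/2},
\]
so that, as long as $E$ stays small, Gronwall yields $E(t)\le C\eps^2 e^{Ct}$. This gives the smallness $E(t)\le C\eps$ on a short time interval $[0,T_\eps]$ with $T_\eps \eqdefa \delta|\ln\eps|$ for $\delta$ chosen sufficiently small in terms of the constant in \eqref{assm}. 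Using interpolation between this smallness and the global $H^s$ a priori bounds, the first branch $(1+\delta|\ln\eps|)^{-3/4}$ of the minimum is recovered at $t=T_\eps$.

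For the long-time branch, Theorem \ref{thm:main1} applied separately to $(\rho,u,T)$ and to $(\bar\rho,\bar u,\bar T)$ gives at time $T_\eps$
\[
\|(\rho-1,u,T-1)(T_\eps)\|_{H^1} + \|(\bar\rho-1,\bar u,\bar T-1)(T_\eps)\|_{H^1} \;\lesssim\; (1+T_\eps)^{-3/4},
\]
so that both solutions have entered the small-data regime of the classical perturbation-around-equilibrium theory (cf.\ \cite{MN1,MN2,Dan-ARMA}). Applying the decay estimate \eqref{decay} to the difference of the two solutions, viewed as a small perturbation of $(1,0,1)$, gives
\[
\|a,v,\vartheta\|_{H^s}(t) \;\lesssim\; (1+t)^{-3/4}+\eps \qquad \text{for } t \geq T_\eps,
\]
and combining the two regimes produces the minimum in the statement.

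The main obstacle is the exponential factor $e^{Ct}$ coming out of the short-time Gronwall estimate: the reference coefficients $\bar u$, $\bar\rho-1$, $\bar T-1$ decay only polynomially, so no uniform-in-time coercivity for the linearized operator is available and no better-than-exponential growth can be extracted on its own. This forces the time horizon on which smallness can be propagated by the crude estimate to be only $|\ln\eps|$-long, and the matching at $t=T_\eps$ must be arranged so that the small-$H^s$ hypothesis required by the perturbation-around-equilibrium theory is satisfied at the handover time. Here Theorem \ref{thm:main1} is essential, both to close the bootstrap on $[0,T_\eps]$ (through the a priori control of lower bounds on $\rho$ and $T$) and to deliver the solution at $t=T_\eps$ into the small-data regime. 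A secondary, purely algebraic difficulty is controlling the linear term $\rho\bar T\,\div v$ in the perturbed thermal equation; as in the proof of Theorem \ref{thm:main1}, one rewrites it via the effective viscous flux and exploits the basic energy identity to absorb the bad part into the dissipation.
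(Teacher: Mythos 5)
Your overall architecture is the same as the paper's: an $H^s$ energy/Gronwall estimate for the difference on a time interval of length $\delta|\ln\eps|$, followed by a handover to the close-to-equilibrium theory once the reference solution has decayed. The first step is essentially correct (the paper's Proposition \ref{prop:ST} does exactly this, with the bootstrap bound $\eps^{1/2}$; note that in this crude estimate the linear term $\rho\bar T\div v$ needs no effective-viscous-flux trick, since exponential growth is acceptable on an $O(|\ln\eps|)$ horizon, and the lower bound on $\rho$ comes from \eqref{assm} together with $\|h\|_{H^s}\le\eps^{1/2}$, not from Theorem \ref{thm:main1}).

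The handover step, however, contains a genuine gap. You propose to apply Theorem \ref{thm:main1} ``separately to $(\rho,u,T)$ and to $(\bar\rho,\bar u,\bar T)$'' at time $T_\eps$. This is circular: Theorem \ref{thm:main1} takes as hypothesis a \emph{global} smooth solution satisfying the uniform bounds \eqref{Ass1}--\eqref{Ass2}, and global existence of $(\rho,u,T)$ is precisely the conclusion of Theorem \ref{thm:main2}; moreover nothing in the hypotheses of Theorem \ref{thm:main2} gives the perturbed solution the $L^\infty$/$C^\alpha$ bounds on $\rho$ and $T$ that Theorem \ref{thm:main1} requires. Relatedly, the decay estimate \eqref{decay} cannot be applied to ``the difference of the two solutions, viewed as a small perturbation of $(1,0,1)$,'' because the difference does not solve \eqref{cns}. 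The paper's route avoids both problems: apply Theorem \ref{thm:main1} \emph{only} to the reference solution to get $\|(\bar\rho-1,\bar u,\bar T-1)(t_0)\|_{H^1}\lesssim(1+\delta|\ln\eps|)^{-3/4}$ at $t_0=\f12(1+\delta|\ln\eps|)$, use the triangle inequality $\rho-1=h+(\bar\rho-1)$ (and likewise for $u$, $T$) together with the Step-1 bound $\|h,v,\cT\|_{H^s}\le\eps^{1/2}$ to conclude that $(\rho,u,T)(t_0)$ itself lies in the small-data regime, and then invoke the classical Matsumura--Nishida theory \cite{MN1,MN2} to continue $(\rho,u,T)$ globally from $t_0$ with decay; the difference bound for $t\ge t_0$ then follows by the triangle inequality from the decay of each solution. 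Your proposal needs this restructuring of the handover to be correct.
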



\setcounter{equation}{0}
\section{Global dynamics of the   Navier-Stokes-Fourier equations}
In this section, we give the proof to Theorem \ref{thm:main1}. To do that, we split our proof into two steps. In the first step, we want to obtain the uniform-in-time bounds for the propagation of the regularity. Then we use it to derive the
 dissipation inequality first. In the second step, we shall use the time-frequency splitting method to obtain the convergence to the equilibrium with quantitative estimates. Here the key part is making full use of the   cancellation due to the coupling effect of the system.

\subsection{Uniform-in-time bounds and the dissipation inequality}  By the energy identity and the coupling effect of the system, in this subsection, we will prove the uniform-in-time bounds for the regularity and then derive the dissipation inequality which is crucial to obtain the decay estimate.

\subsubsection{Zero-order estimate for the system}

 In what follows, we set $a\eqdefa\rho-1$ and
$\theta\eqdefa T-1.$
We first recall the basic energy identities for \eqref{cns}:
\begin{prop}\label{energylem1}
	Let $(\rho, u,T)$ be a global and smooth solution of \eqref{cns} and $T>0.$ Then the following equality holds
	\begin{equation}\label{energyest2}
	\begin{split}
	\frac{d}{dt}&\Big( \int (\rho\ln\rho-\rho+1)\,dx+\f12\int \rho |u|^2\,dx+\int \rho(T-\ln T-1)\,dx\Big)\\
	&+ \int\bigl(\f{\mu|\na u+(\na u)'|^2+\lam(\div u)^2}{T}+\f{|\na T|^2}{T^2}\bigr)\,dx=0.
	\end{split}
	\end{equation}
	\end{prop}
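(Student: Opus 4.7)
The plan is to establish \eqref{energyest2} by the classical derivation of the physical entropy identity for Navier-Stokes--Fourier: I would differentiate each of the three nonnegative functionals on the left-hand side separately, using the corresponding equation in \eqref{fcns} (which coincides with \eqref{cns} for smooth solutions), and then observe that three pairs of terms cancel exactly when the three resulting identities are added.

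First I would handle $\int(\rho\ln\rho-\rho+1)\,dx$. Its pointwise $t$-derivative equals $\ln\rho\,\pa_t\rho$, so $\eqref{fcns}_1$ combined with an integration by parts (using $\rho\to 1$ and $u\to 0$ at infinity, which also gives $\int\div u\,dx=0$) yields $\f{d}{dt}\int(\rho\ln\rho-\rho+1)\,dx=-\int\rho\,\div u\,dx$. For the kinetic energy I would invoke the identity $\f{d}{dt}\int\rho\psi\,dx=\int\rho(\pa_t\psi+u\cdot\na\psi)\,dx$, valid for any smooth $\psi$ as an immediate consequence of mass conservation; applied with $\psi=\f12|u|^2$ and combined with $\eqref{fcns}_2$, an integration by parts gives
$$\f{d}{dt}\f12\int\rho|u|^2\,dx=\int\rho T\,\div u\,dx-\int\bS(u){:}\na u\,dx,$$
where $\bS(u){:}\na u$ is exactly the viscous dissipation that appears on the right of $\eqref{fcns}_3$, by the standard symmetrization identity.

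For the thermal functional I would apply the same trick with $\psi=T-\ln T-1$, so that $\pa_t\psi+u\cdot\na\psi=(1-1/T)(\pa_t T+u\cdot\na T)$, and then substitute $\eqref{fcns}_3$ rewritten as $\rho(\pa_t T+u\cdot\na T)=-\rho T\,\div u+\bS(u){:}\na u+\Delta T$. Expanding produces six integrals; two of them combine via $\int\Delta T\,dx=0$ and the integration-by-parts identity $\int\Delta T/T\,dx=\int|\na T|^2/T^2\,dx$ (since $\na(1/T)=-\na T/T^2$). Adding the three identities then produces three exact cancellations: the $\pm\int\rho\,\div u\,dx$ between the mass-entropy and thermal parts, the pressure work $\pm\int\rho T\,\div u\,dx$ between the kinetic and thermal parts, and the viscous work $\pm\int\bS(u){:}\na u\,dx$ again between the kinetic and thermal parts. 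What survives is exactly $-\int\bS(u){:}\na u/T\,dx-\int|\na T|^2/T^2\,dx$, which after transposition is \eqref{energyest2}.

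The computation is essentially algebraic and its conceptual content is the clever choice of the three Lyapunov functionals that makes the exchange terms cancel. I do not expect a real obstacle here: the only point that needs care is the validity of the integrations by parts at infinity, but this is automatic since $(\rho,u,T)\to(1,0,1)$ and the solution is smooth by hypothesis, so all boundary contributions vanish.
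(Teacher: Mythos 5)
Your proof is correct and is essentially the paper's own argument: the authors likewise differentiate the mass-entropy, kinetic and thermal functionals (splitting the thermal one into $\int\rho T\,dx$ and $-\int\rho(\ln T-1)\,dx$ rather than using your renormalized $\psi=T-\ln T-1$ directly) and add the resulting identities so that the terms $\pm\int\rho\,\div u\,dx$, $\pm\int\rho T\,\div u\,dx$ and $\pm\int\bS(u):\na u\,dx$ cancel, leaving exactly the temperature-weighted dissipation. Note only that $\bS(u):\na u=\f{\mu}2|\na u+(\na u)'|^2+\lam(\div u)^2$, so your computation produces the factor $\f{\mu}{2}$ in the numerator of the dissipation term; the $\mu$ written in \eqref{energyest2} (and in the paper's intermediate identities) is a harmless factor-of-two slip in the paper, not a gap in your argument.
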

	
\begin{proof}
By the directly computation, we can get that
\beno
\f{d}{dt}\int (\rho\ln\rho-\rho+1)\,dx +\int \rho \div u\,dx=0,
\eeno
\beno
\f12\f{d}{dt}\int \rho |u|^2\,dx+\mu\int |\na u|^2\,dx+(\mu+\lam)\int (\div u)^2\,dx+\int \na(\rho T)\cdot u\,dx=0,
\eeno
\beno
\f{d}{dt}\int \rho T\,dx+\int \rho T\div u\,dx=\mu\int |\na u+(\na u)'|^2\,dx+\lam\int (\div u)^2\,dx,
\eeno
\beno
-\f{d}{dt}\int \rho(\ln T-1)\,dx= \int \rho \div u\,dx-\int \f{\mu|\na u+(\na u)'|^2+\lam(\div u)^2}{T}\,dx-\int \f{\Delta T}{T}\,dx.
\eeno
Combining all the above estimates, and noting that
$$\mu\int |\na u|^2\,dx+(\mu+\lam)\int (\div u)^2\,dx=\mu\int |\na u+(\na u)'|^2\,dx+\lam\int (\div u)^2\,dx,$$
$$\int \f{\Delta T}{T}\,dx=-\int \na T \cdot \na (\f1T)\,dx =\int \f{|\na T|^2}{T^2}\,dx,$$
we arrive at \eqref{energyest2}.
\end{proof}

\medskip

\begin{rmk}\label{rmk1}
	By Taylor expansion, it is not difficult to check that
$\rho\ln\rho-\rho+1\geq C(M_1)(\rho-1)^2$ and $T-\ln T-1\geq C(M_1)(T-1)^2$ if $(\rho, T)$ satisfies \eqref{Ass1}.
Moreover, if $T>0$, we have
\beno
\|\na u\|_{L^2}+\|\na T\|_{L^2} \leq  \|\sqrt{T}\|_{L^\infty} \|\f{\na u}{\sqrt{T}}\|_{L^2}+ \|T\|_{L^\infty} \|\f{\na T}{T}\|_{L^2} .
\eeno
Thus, from this lemma, we get that
\beno
\|a\|_{L^2}+\|\sqrt{\rho} u\|_{L^2}+\|\sqrt{\rho} \theta\|_{L^2}+\|\na u\|_{L^2_t(L^2)}+\|\na \theta\|_{L_t^2(L^2)} <\infty.
\eeno

\end{rmk}

\medskip

\begin{prop}\label{energylem2}
	Let $\mu>\f12 \lam,$ and $(\rho, u,T)$ be a global solution of \eqref{cns} and satisfy \eqref{Ass1}. Then the following inequality hold
	\begin{equation}\label{energyest3}
	\frac{d}{dt}\int \rho u^4\,dx+ \int |u|^2|\na u|^2\,dx
	\leq  C\|\na u\|_{L^2}^2( \| a\|_{L^6}^2 +\|\na \theta\|_{L^2}^2+\|\na u\|_{L^2}^2),
	\end{equation}
	where $C$ is a positive constants depending on $\mu,$ $\lam$ and $M_1$.
\end{prop}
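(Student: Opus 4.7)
The plan is to test the momentum equation in \eqref{fcns} against $4|u|^2 u$ and integrate over $\R^3$. Using the continuity equation $\partial_t\rho + \div(\rho u)=0$ in the standard material-derivative form, the time and transport terms on the left collapse to
$$\frac{d}{dt}\int \rho|u|^4\,dx = 4\int |u|^2 u\cdot\bigl[\mu\Delta u + (\mu+\lambda)\nabla\div u - \nabla P\bigr]\,dx,$$
so the argument reduces to controlling the two terms on the right.

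For the viscous contribution I would integrate by parts to obtain
$$4\mu\int |u|^2 u\cdot\Delta u\,dx = -4\mu\int |u|^2|\nabla u|^2\,dx - 2\mu\int |\nabla|u|^2|^2\,dx,$$
$$4(\mu+\lambda)\int |u|^2 u\cdot\nabla\div u\,dx = -4(\mu+\lambda)\int |u|^2(\div u)^2\,dx - 4(\mu+\lambda)\int (u\cdot\nabla|u|^2)\div u\,dx.$$
The first two yield pure dissipation, while the mixed cross-term has indefinite sign. Estimating it by Cauchy–Schwarz and then by Young's inequality with a weight $\alpha>0$, together with the pointwise bounds $|\nabla|u|^2|^2\le 4|u|^2|\nabla u|^2$ and $(\div u)^2\le 3|\nabla u|^2$, one can split the resulting quadratic form in $\int|u|^2|\nabla u|^2$, $\int|\nabla|u|^2|^2$ and $\int|u|^2(\div u)^2$. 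Choosing $\alpha$ so that all the coefficients combine with $-4\mu\int|u|^2|\nabla u|^2$ into something strictly negative is exactly where the restriction $\mu>\tfrac{1}{2}\lambda$ enters: it provides enough slack for a valid $\alpha$ to exist, yielding a net bound $-c\int|u|^2|\nabla u|^2\,dx$ for some $c=c(\mu,\lambda)>0$, which produces the dissipation term on the left of \eqref{energyest3}.

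For the pressure term I would use the algebraic identity $P-1=(1+a)(1+\theta)-1=a+\rho\theta$ and integrate by parts to get
$$-4\int |u|^2 u\cdot\nabla P\,dx = 4\int \div(|u|^2 u)\,(a+\rho\theta)\,dx.$$
Since $|\div(|u|^2 u)|\le 3|u|^2|\nabla u|$ and $\rho\le M_1$, the absolute value is $\le C\int|u|^2|\nabla u|(|a|+|\theta|)\,dx$. Applying Hölder with the splitting $L^3\cdot L^2\cdot L^6$ and then the Sobolev embeddings $\|u\|_{L^6}\lesssim\|\nabla u\|_{L^2}$ and $\|\theta\|_{L^6}\lesssim\|\nabla\theta\|_{L^2}$ yields
$$\int|u|^2|\nabla u|(|a|+|\theta|)\,dx \le \|u\|_{L^6}^2\|\nabla u\|_{L^2}\bigl(\|a\|_{L^6}+\|\theta\|_{L^6}\bigr) \le C\|\nabla u\|_{L^2}^3\bigl(\|a\|_{L^6}+\|\nabla\theta\|_{L^2}\bigr),$$
and Young's inequality $\|\nabla u\|_{L^2}^3 X\le \tfrac12\|\nabla u\|_{L^2}^2(\|\nabla u\|_{L^2}^2+X^2)$ produces exactly the desired right-hand side $C\|\nabla u\|_{L^2}^2(\|a\|_{L^6}^2+\|\nabla\theta\|_{L^2}^2+\|\nabla u\|_{L^2}^2)$. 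Combining this with the viscous estimate and absorbing any small multiple of $\int|u|^2|\nabla u|^2$ into the $c$ from the previous step closes the argument.

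The principal difficulty lies in the viscous step: the mixed term $\int (u\cdot\nabla|u|^2)\div u\,dx$ has no sign and is bounded by $\|\nabla|u|^2\|_{L^2}\||u||\div u|\|_{L^2}$, so one has to balance the relative weights on $\int|\nabla|u|^2|^2$ and $\int|u|^2(\div u)^2$ through a single Young parameter; the sharp hypothesis $\mu>\tfrac{1}{2}\lambda$ is precisely what makes this balancing succeed while leaving a strictly positive coefficient in front of $\int |u|^2|\nabla u|^2\,dx$.
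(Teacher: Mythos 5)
Your proposal is correct and follows essentially the same route as the paper: multiply the momentum equation by $4|u|^2u$, integrate by parts so the viscous part yields $\int|u|^2|\na u|^2$-type dissipation plus the indefinite cross term $4(\lam+\mu)\int(u\cdot\na|u|^2)\div u\,dx$, absorb that cross term using $\bigl|\na|u|\bigr|\le|\na u|$ and the hypothesis $\mu>\f12\lam$ (the paper completes a square where you invoke Young with a weight, which is the same balancing), and estimate the pressure term via $\div(|u|^2u)$, H\"older $L^3\cdot L^2\cdot L^6$, Sobolev embedding and Young, exactly as in the paper.
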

\begin{proof}
	Multiplying $4|u|^2 u$ to $\eqref{fcns}_2$, and then integrating on $\R^3,$ we can obtain that
	\beno
	\begin{split}
		\frac{d}{dt}&\int \rho u^4\,dx + \int \Big[4|u|^2\Big( \mu|\na u|^2+(\lam+\mu)(\div u)^2+2\mu\big|\na|u|\big|^2  \Big)+4(\lam+\mu)(\na |u|^2)\cdot u\div u    \Big]\,dx\\
		&=4\int \div (|u|^2 u) (P-1)\,dx\leq C\int (P-1) |u|^2|\na u|\,dx
		\leq C\|P-1\|_{L^6}\|u^2\|_{L^3}\|\na u\|_{L^2}\\
		&\leq C\|  aT+\theta\|_{L^6}\|\na u\|_{L^2}^3\leq (\|a\|_{L^6}+\|\theta\|_{L^6})\|\na u\|_{L^2}^3\leq C(\|a\|_{L^6}^2+\|\na \theta\|_{L^2}^2)\|\na u\|_{L^2}^2+\|\na u\|_{L^2}^4.
	\end{split}
	\eeno
	Using the inequality $\big|\na|u|\big|\leq |\na u|$, we have
	\beno
	\begin{split}
		&4|u|^2\Big( \mu|\na u|^2+(\lam+\mu)(\div u)^2+2\mu\big|\na|u|\big|^2  \Big)+4(\lam+\mu)(\na |u|^2)\cdot u\div u\\
		\geq& 4|u|^2\Big[ \mu|\na u|^2+(\lam+\mu)(\div u)^2+2\mu\big|\na|u|\big|^2  -2(\lam+\mu)\big|\na|u|\big| |\div u|\Big]\\
		=& 4|u|^2\Big[ \mu|\na u|^2+(\lam+\mu)\Big( \div u- \big|\na|u|\big|\Big)^2\Big]+4|u|^2(\mu-\lam)\big|\na|u|\big|^2\\
		\geq&  C|u|^2|\na u|^2,
	\end{split}
	\eeno
	where in the last step we use $\mu>\f12\lam.$ Combining these two estimates, we arrive at  \eqref{energyest3}.
\end{proof}

\subsubsection{First-order estimate for the system}
We want to give the first order energy estimate for the system. First, we need following three lemmas which are obtained due to   the coupling effect of the system. They will
play the crucial role in the proof of main theorem.
\begin{lem}\label{energylem3}
Let $(\rho,u,T)$ be a smooth global solution of \eqref{fcns} and satisfy \eqref{Ass1}. Then the following estimates hold
\begin{equation}\label{divest1}
\Big|\int a\div u\,dx+\f{d}{dt}\int f(a)\,dx\Big|\leq C\|a\|_{L^6}\|\na u\|_{L^2},
\end{equation}
\begin{equation}\label{divest2}
\Big|\int \theta\div u\,dx+\f12\f{d}{dt}\int \rho\theta^2\,dx\Big|\leq C(\|\na u\|_{L^2}^2+\|\na \theta\|_{L^2}^2),
\end{equation}
where $f(a)=a-\ln(1+a),$ and $C$ is a constant just depending only on $\mu,$ $\lam$ and $M_1.$
\end{lem}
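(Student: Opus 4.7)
The plan is to derive, for each inequality, an exact identity rewriting the linear coupling $\int a\div u\,dx$ (resp.\ $\int \theta\div u\,dx$) as a time derivative of a positive functional plus a genuinely cubic remainder, and then to close the remainder by H\"older and Sobolev using the zero-order bounds of Remark \ref{rmk1} together with the $L^\infty$ assumption \eqref{Ass1}.

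For \eqref{divest1}, I would rewrite the continuity equation as $\pa_t a + u\cdot\na a + (1+a)\div u = 0$ and differentiate $\int f(a)\,dx$ in time. With $f'(a) = a/(1+a)$, one integration by parts in the transport piece (using $\na f(a) = f'(a)\na a$) together with the algebraic identity $f'(a)(1+a) = a$---which is exactly what forces the choice $f(a) = a - \ln(1+a)$---produces the clean identity
\begin{equation*}
\int a\div u\,dx + \f{d}{dt}\int f(a)\,dx = \int f(a)\div u\,dx.
\end{equation*}
On the available range of $\rho$ (upper bound from \eqref{Ass1}, lower bound as propagated in Theorem \ref{thm:main1}), Taylor gives $|f(a)|\leq C(M_1)a^2$, so H\"older yields $\|f(a)\|_{L^2} \leq C\|a\|_{L^4}^2 \leq C\|a\|_{L^3}\|a\|_{L^6}$, and $\|a\|_{L^3}$ is bounded by interpolating the $L^2$ control of Remark \ref{rmk1} against $\|a\|_{L^\infty}\leq M_1$.

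For \eqref{divest2}, I would combine $\eqref{fcns}_3$ with the continuity equation to put the temperature equation in nonconservative form $\rho(\pa_t\theta + u\cdot\na\theta) + \rho T\div u = \f{\mu}{2}|\na u + (\na u)'|^2 + \lam(\div u)^2 + \Delta\theta$. Multiplying by $\theta$ and integrating, continuity converts the material-derivative piece into $\f12\f{d}{dt}\int \rho\theta^2\,dx$; splitting $\rho T\theta = \theta + a\theta + \rho\theta^2$ extracts the target $\int\theta\div u$, and integrating by parts in $\int\theta\Delta\theta = -\|\na\theta\|_{L^2}^2$ completes the identity
\begin{equation*}
\int\theta\div u\,dx + \f12\f{d}{dt}\int \rho\theta^2\,dx = -\int a\theta\div u\,dx - \int\rho\theta^2\div u\,dx + \int\theta\bigl[\f{\mu}{2}|\na u + (\na u)'|^2 + \lam(\div u)^2\bigr]\,dx - \|\na\theta\|_{L^2}^2.
\end{equation*}
The viscous term is $\leq C(M_1)\|\na u\|_{L^2}^2$ by \eqref{Ass1}, and the two cubic remainders are handled by H\"older--Sobolev: $|\int a\theta\div u\,dx| \leq \|a\|_{L^3}\|\theta\|_{L^6}\|\na u\|_{L^2} \leq C\|\na\theta\|_{L^2}\|\na u\|_{L^2}$, with a similar bound on $\int\rho\theta^2\div u\,dx$ using $\|\rho\|_{L^\infty}\leq M_1$ and $\|\theta\|_{L^3}$ bounded by interpolation. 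Young's inequality then absorbs the mixed products into $\|\na u\|_{L^2}^2 + \|\na\theta\|_{L^2}^2$.

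The main conceptual step, in both parts, is spotting the right potential so that the dangerous linear divergence cancels exactly: $f(a) = a - \ln(1+a)$ (forced by $f'(a)(1+a) = a$) on the density side and $\f12\rho\theta^2$ (so that continuity kills the cross terms in the material derivative) on the temperature side. Once these are in place, only cubic remainders survive, and they fall to routine H\"older--Sobolev estimates built on the $L^\infty$ bounds of \eqref{Ass1} and the $L^2$ bounds of Remark \ref{rmk1}.
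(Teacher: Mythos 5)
Your proposal is correct and takes essentially the same route as the paper: the same potentials $f(a)=a-\ln(1+a)$ and $\tfrac12\int\rho\theta^2\,dx$, the same exact cancellation identities derived from the continuity equation and the reformulated temperature equation \eqref{eqoftheta}, and the same H\"older--Sobolev estimates resting on \eqref{Ass1} together with the uniform $L^2\cap L^\infty$ bounds of Remark \ref{rmk1}. One small caveat: you should not appeal to the lower bound of $\rho$ "as propagated in Theorem \ref{thm:main1}," since that bound is established later (Corollary \ref{prop:lower bound}) using this very lemma; however, the control $|f(a)|\le C a^2$ you need there is exactly what the paper's Remark \ref{rmk2} asserts, so this does not set your argument apart from the paper's.
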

\begin{rmk}\label{rmk2}
It is not difficult to verify that there exists a small constant $c$ such that $|f(a)|\leq ca^2.$ And thanks to Remark \ref{rmk1}, $\int \rho\theta^2\,dx$ can be controlled by $\int \rho(T-\ln T-1)\,dx.$
\end{rmk}
\begin{proof}
Noting that $\eqref{fcns}_1,$ we can get that
\beno
\begin{split}
\int a\div u\,dx&=\int \f{a}{\rho}\rho\div u\,dx=-\int \f{a}{\rho}(a_t+u\cdot\na a)\,dx\\
&=-\f{d}{dt}\int f(a)\,dx-\int u\cdot\na f(a)\,dx=-\f{d}{dt}\int f(a)\,dx+\int f(a)\div u\,dx,
\end{split}
\eeno
which implies that
\beno
\begin{split}
&\Big|\int a\div u\,dx+\f{d}{dt}\int f(a)\,dx\Big|= \Big|\int f(a)\div u\,dx\Big|\\
&\leq C\|f(a)\|_{L^2}\|\na u\|_{L^2}\leq C\|a\|_{L^6}\|\na u\|_{L^2},
\end{split}
\eeno
where we use Remark \ref{rmk2} in the last step. It completes the proof of \eqref{divest1}.

The third equation of \eqref{fcns} can be rewritten by
\begin{equation}\label{eqoftheta}
\rho\theta_t+\rho u\cdot \na\theta+(P-1)\div u-\f{\mu}2|\na u+(\na u)'|^2-\lam(\div u)^2-\Delta \theta=-\div u.
\end{equation}
Then making the inner product to the above equation with $\theta,$ and noting that $(a,\theta)\in L^\infty((0,\infty);L^2\cap L^\infty),$ we obtain that
\beno
\begin{split}
\pm\int \theta \div u\,dx &\leq \mp\int \rho\theta\theta_t\,dx+C\|\rho\theta\|_{L^3}\| u\|_{L^6}\|\na \theta\|_{L^2}+C\|P-1\|_{L^3}\| \theta\|_{L^6}\|\na u\|_{L^2}+C\|\na u\|_{L^2}^2+\|\na \theta\|_{L^2}^2\\
&\leq \mp\int \rho\theta\theta_t\,dx+C(\|\na u\|_{L^2}^2+\|\na \theta\|_{L^2}^2).
\end{split}
\eeno
And
\beno
\begin{split}
\mp\int \rho\theta\theta_t\,dx&=\mp\f12\f{d}{dt}\int \rho \theta^2\,dx\pm\f12\int \rho_t \theta^2\,dx=\mp\f12\f{d}{dt}\int \rho \theta^2\,dx\mp\f12\int \theta^2u\cdot \na a\,dx\mp\f12\int \theta^2\rho\div u\,dx\\
&\leq \mp\f12\f{d}{dt}\int \rho \theta^2\,dx\pm\f12\int a\div(\theta^2 u)\,dx +C\|\rho\theta\|_{L^3}\|\theta\|_{L^6}\|\na u\|_{L^2}\\
&\leq \mp\f12\f{d}{dt}\int \rho \theta^2\,dx+C(\|\na u\|_{L^2}^2+\|\na \theta\|_{L^2}^2).
\end{split}
\eeno
Combining these two estimates, we arrive at \eqref{divest2}.
\end{proof}

\smallskip

\begin{lem}\label{energylem4}
Let $(\rho,u,T)$ be a smooth global solution of \eqref{fcns} and satisfy \eqref{Ass1}. Denote the effective viscosity flux as $G=(2\mu+\lam)\div u-(P-1).$ Then the following estimate holds
\begin{equation}\label{divest3}
-\int Ta_t\div u\,dx\leq -\f{1}{2\mu+\lam}\f{d}{dt}\int F(a)\,dx+C(\|\na u\|_{L^2}^2+\|\na \theta\|_{L^2}^2+\|u\na u\|_{L^2}^2+\|\na u\|_{L^2}\|\na G\|_{L^2}),
\end{equation}
where $F(a)=\f12 a^2+a-(1+a)\ln (1+a).$ And $C$ is a constant just depending only on $\mu,$ $\lam$ and $M_1$.
\end{lem}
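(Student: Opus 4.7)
The plan is to expand $-\int T a_t\,\div u\,dx$ using the continuity equation, exploit the effective-flux identity $(2\mu+\lam)\na\div u = \na G + \na P$ to extract a distinguished contribution $\frac{1}{2(2\mu+\lam)}\int a^2\div u\,dx$, and recognize that this is exactly $-\frac{1}{2\mu+\lam}\frac{d}{dt}\int F(a)\,dx$. All remaining pieces should split into quantities already present in the stated RHS.

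\smallskip

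Concretely, I would first substitute $a_t = -\rho\div u - u\cdot\na a$; writing $T\na a = \na(Ta) - a\na\theta$ and integrating by parts twice then yields the clean identity
\[
-\int T a_t\,\div u\,dx = \int T(\div u)^2\,dx - \int Ta\,u\cdot\na\div u\,dx - \int a u\cdot\na\theta\,\div u\,dx,
\]
via the cancellation $T(\rho-a)=T$. The first term is $\le C\|\na u\|_{L^2}^2$ since $T\le 1+M_1$, and the third is $\le C\|a\|_{L^\infty}\|u\na u\|_{L^2}\|\na\theta\|_{L^2}$, both harmless. For the middle term I would insert $(2\mu+\lam)\na\div u = \na G + \rho\na\theta + T\na a$: the $\na G$ contribution gives $-\frac{1}{2\mu+\lam}\int Tau\cdot\na G\,dx$, estimated by $\|Tau\|_{L^2}\|\na G\|_{L^2}\lesssim \|a\|_{L^3}\|u\|_{L^6}\|\na G\|_{L^2}\lesssim \|\na u\|_{L^2}\|\na G\|_{L^2}$, using Sobolev in $\R^3$ together with the uniform bound on $\|a\|_{L^3}$ obtained from Remark \ref{rmk1} and \eqref{Ass1}; the $\rho\na\theta$ contribution is handled analogously; the $T\na a$ contribution becomes $\int T^2\,u\cdot\na(a^2/2)\,dx$ and, after one more integration by parts, produces $-\tfrac12\int T^2 a^2\div u\,dx - \int Ta^2 u\cdot\na\theta\,dx$.

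\smallskip

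Finally, expanding $T^2 = 1+2\theta+\theta^2$ isolates the leading contribution $\frac{1}{2(2\mu+\lam)}\int a^2\div u\,dx$. To recognize it as $-\frac{1}{2\mu+\lam}\frac{d}{dt}\int F(a)\,dx$, I would multiply the continuity equation by $F'(a)=a-\ln(1+a)$ and use the algebraic identity $F'(a)(1+a)-F(a)=a^2/2$ to derive $\frac{d}{dt}\int F(a)\,dx = -\tfrac12\int a^2\div u\,dx$. The leftover terms $\int\theta a^2\div u$, $\int\theta^2 a^2\div u$, $\int Ta^2 u\cdot\na\theta$, and $-\frac{1}{2\mu+\lam}\int Ta\rho u\cdot\na\theta$ are absorbed via $\|\theta a\|_{L^2}\le \|a\|_{L^3}\|\theta\|_{L^6}\lesssim\|\na\theta\|_{L^2}$ and $\|a^2 u\|_{L^2}\le\|a\|_{L^\infty}\|au\|_{L^2}\lesssim\|\na u\|_{L^2}$, paired with the dual factor. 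The main obstacle will be systematically avoiding stray $\|u\|_{L^2}$ or $\|\theta\|_{L^2}$ factors whenever bounding $\|Tau\|_{L^2}$, $\|Ta\rho u\|_{L^2}$, or $\|Ta^2 u\|_{L^2}$; the fix is always to rely on Sobolev $\|\cdot\|_{L^6}\lesssim\|\na\cdot\|_{L^2}$ in $\R^3$ and to interpolate the companion factor into $L^3$ using the uniform $L^2\cap L^\infty$ bounds already at our disposal.
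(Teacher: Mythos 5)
Your proposal is correct and takes essentially the same route as the paper: the same reduction to $\int T(\div u)^2\,dx-\int Ta\,u\cdot\nabla\div u\,dx-\int a u\cdot\nabla\theta\,\div u\,dx$, the same substitution $(2\mu+\lambda)\nabla\div u=\nabla G+\rho\nabla\theta+T\nabla a$ with the $\nabla G$ and $\rho\nabla\theta$ parts bounded via $\|a\|_{L^3}\|u\|_{L^6}$, and the same identification of the quadratic-in-$a$ term with $-\frac{1}{2\mu+\lambda}\frac{d}{dt}\int F(a)\,dx$ (the paper writes $a u\cdot\nabla a=\rho u\cdot\nabla f(a)$ and uses $\div(\rho u)=-a_t$, which is equivalent to your renormalization of the continuity equation by $F'(a)$). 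Only track the prefactor $-\frac{1}{2\mu+\lambda}$ consistently in the $T\nabla a$ step---read literally, ``the contribution $\int T^2 u\cdot\nabla(a^2/2)\,dx$'' has the wrong sign---though your final extracted term $\frac{1}{2(2\mu+\lambda)}\int a^2\div u\,dx=-\frac{1}{2\mu+\lambda}\frac{d}{dt}\int F(a)\,dx$ is the correct one.
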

\begin{rmk}\label{rmk3}
Thanks to the upper bound of density, we have that $|F(a)|\lesssim \rho\ln\rho-\rho+1.$
\end{rmk}
\begin{proof}
Using the first equation of \eqref{fcns}, we can get that
\begin{equation}\label{divest4}
\begin{split}
&-\int Ta_t\div u\,dx=\int T(\div u)^2\,dx+\int T\div(au)\div u\,dx\\
&\leq C\|\na u\|_{L^2}^2-\int au\cdot\na \theta\,\div u\,dx-\int Ta u\cdot\na \div u\,dx\\
&\leq C\|\na u\|_{L^2}^2+C\|\na \theta\|_{L^2}\|u\na u\|_{L^2}-\int Ta u\cdot\na \div u\,dx.
\end{split}
\end{equation}
By the definition of effective viscosity $G,$ we can separate the last term into four parts
\beno
\begin{split}
-\int Ta u\cdot\na \div u\,dx&=-\f1{2\mu+\lam}\int Ta u\cdot\na G\,dx-\f1{2\mu+\lam}\int Ta u\cdot\na (P-1)\,dx\\
&=-\f1{2\mu+\lam}\int Ta u\cdot\na G\,dx-\f1{2\mu+\lam}\int Ta u\cdot(\rho\na\theta+\theta\na a+\na a)\,dx\\
&=\f1{2\mu+\lam}\sum_{i=1}^4 I_i.
\end{split}
\eeno
\underline{Estimates of $I_1$ and $I_2$.} It is easy to obtain that
\begin{equation}\label{divest5}
\begin{split}
I_1+I_2\leq C\|a\|_{L^3}\|u\|_{L^6}(\|\na G\|_{L^2}+\|\na \theta\|_{L^2})\leq C\|\na u\|_{L^2}(\|\na G\|_{L^2}+\|\na \theta\|_{L^2}).
\end{split}
\end{equation}
\underline{Estimate of $I_3$.} Using integration by parts, we get that
\begin{equation}\label{divest6}
\begin{split}
I_3&=-\f12\int T\theta u\cdot \na (a^2)\,dx=\f12\int a^2\div(T\theta u)\,dx\\
&\leq C\|a\|_{L^3}(\|\theta\|_{L^6}\|\na u\|_{L^2}+\|u\|_{L^6}\|\na \theta\|_{L^2})\leq C\|\na u\|_{L^2}\|\na \theta\|_{L^2}.
\end{split}
\end{equation}
\underline{Estimate of $I_4$.} Recalling $T=\theta+1,$ we have
\beno
I_4=- \bigl(\int \theta au\cdot \na a\,dx+\int au\cdot \na a\,dx\bigr)=\sum_{i=1}^2 I_{4,i}.
\eeno
We can estimate $I_{4,1}$ similarly as $I_3$ as follow:
\begin{equation}\label{divest7}
\begin{split}
I_{4,1}&=-\f12\int \theta u\cdot \na (a^2)\,dx=\f12\int a^2\div(\theta u)\,dx\\
&\leq C\|a\|_{L^3}(\|\theta\|_{L^6}\|\na u\|_{L^2}+\|u\|_{L^6}\|\na \theta\|_{L^2})\leq C\|\na u\|_{L^2}\|\na \theta\|_{L^2}.
\end{split}
\end{equation}
For $I_{4,2},$ we can obtain that
\begin{equation}\label{divest8}
\begin{split}
I_{4,2}&=-\int \rho u\cdot (\f{a}{1+a} \na a)\,dx=-\int \rho u\cdot\na f(a)\,dx\\
&=\int f(a)\div(\rho u)\,dx=-\int f(a) a_t\,dx=-\f{d}{dt}\int F(a)\,dx,
\end{split}
\end{equation}
where $f(a)$ is defined in Lemma \ref{energylem3}.

Putting \eqref{divest5}-\eqref{divest8} into \eqref{divest4}, we arrive at \eqref{divest3}.
\end{proof}

\smallskip

\begin{lem}\label{energylem5}
Let $(\rho,u,T)$ be a smooth global solution of \eqref{fcns} and satisfy \eqref{Ass1}. Then the following estimate holds
\begin{equation}\label{divest9}
\begin{split}
-\int \rho\theta_t\div u\,dx&\leq \f{1}{2\mu+\lam}\f{d}{dt}\int H(a,\theta)\,dx+C\|\sqrt{\rho} u_t\|_{L^2}(\|u\na u\|_{L^2}+\|\na u\|_{L^2})\\
&\quad +C\|\na G\|_{L^2}\bigl(\|\na u\|_{L^2}+\|u\na u\|_{L^2}+\|\na \theta\|_{L^2}\bigl)+C\|a\|_{L^6}\|\na u\|_{L^2}\\
&\quad +C\bigl(\|\na u\|_{L^2}^2+\|u\na u\|_{L^2}^2+\|\na \theta\|_{L^2}^2\bigl),
\end{split}
\end{equation}
where $H(a,\theta)=\rho\theta(\f12\theta-\f12a\theta-a)-f(a),$ and $f(a)$ is defined in Lemma \ref{energylem3}. $C$ is a constant just depending only on $\mu,$ $\lam$ and $M_1.$
\end{lem}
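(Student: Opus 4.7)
The starting point is the effective viscous flux identity $\div u = \f{1}{2\mu+\lam}\bigl(G + (P-1)\bigr)$, which splits the integrand:
\beno
-\int \rho\theta_t\div u\,dx = -\f{1}{2\mu+\lam}\int \rho\theta_t G\,dx - \f{1}{2\mu+\lam}\int \rho\theta_t(P-1)\,dx.
\eeno
My plan is to treat the two pieces separately and reassemble the resulting time derivatives into $\f{1}{2\mu+\lam}\f{d}{dt}\int H\,dx$.

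For the pressure piece, I will expand $P-1 = \theta + a + a\theta$ and split $-\int\rho\theta_t(P-1) = -\int\rho\theta\theta_t - \int \rho a\theta_t - \int \rho a\theta\theta_t$. The first and third summands will be rewritten via the identity $\rho f\pa_t f = \f12\pa_t(\rho f^2) - \f12\rho_t f^2$; using continuity $\rho_t = -\div(\rho u)$ and integrating by parts in space, they produce $-\f12\f{d}{dt}\int\rho\theta^2$ and $-\f12\f{d}{dt}\int\rho a\theta^2$ respectively, up to convective cross terms of the form $\int\rho u\theta\cdot\na\theta$. The middle summand $-\int\rho a\theta_t$ will be handled by integration by parts in time, producing $-\f{d}{dt}\int\rho a\theta$ together with an integral $\int\rho u\theta\cdot\na a$ whose $\na a$ factor we cannot directly control. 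The crucial observation is that, by continuity, $\rho u\cdot\na a = -\rho a_t - \rho^2\div u$; substituting this and integrating by parts in time once more makes a copy of $\int\rho u\theta\cdot\na a$ reappear with the opposite sign, at which point it cancels, yielding a clean closed-form identity. The leftover $-\int\theta\div u$-type integrals will be bounded directly by $\|\theta\|_{L^\infty}\|\na u\|_{L^2}$, or, where needed, processed through Lemma \ref{energylem3}, which supplies the $-f(a)$ component of $H$ together with the claimed error $C\|a\|_{L^6}\|\na u\|_{L^2}$.

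For the flux piece $-\int\rho\theta_t G\,dx$, I will write $\rho\theta_t = \pa_t(\rho\theta) + \div(\rho u)\theta$; integration by parts in time then produces $-\f{d}{dt}\int\rho\theta G$ plus error integrals of the form $\int \rho u\theta\cdot\na G$ and $\int \rho u G\cdot\na\theta$. These will be controlled via the rearranged momentum equation
\beno
\na G = \rho u_t + \rho u\cdot\na u + \mu\,\na\times(\na\times u),
\eeno
which follows from $\eqref{fcns}_2$ and naturally generates the pairings $\|\sqrt{\rho}\, u_t\|_{L^2}\cdot(\|u\na u\|_{L^2}+\|\na u\|_{L^2})$ and $\|\na G\|_{L^2}\cdot(\|\na u\|_{L^2}+\|u\na u\|_{L^2}+\|\na\theta\|_{L^2})$; the Sobolev embedding $\|u\|_{L^6}\lesssim\|\na u\|_{L^2}$ together with the uniform $L^\infty$ bounds on $(\rho, T)$ from \eqref{Ass1} dispose of the convective factors. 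Once the flux- and pressure-piece calculations are combined, all the time derivatives reassemble into $\f{1}{2\mu+\lam}\f{d}{dt}\int H\,dx$.

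The hardest part will be the cancellation of the $\na a$-integrals outlined in the pressure piece analysis: without the ``double integration by parts in time'' step, the estimate would require $\|\na a\|_{L^2}$, which our H\"older assumption $a\in C^\alpha$ does not supply.
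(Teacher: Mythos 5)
Your handling of the flux piece $-\int\rho\theta_t G\,dx$ contains the fatal gap. If you write $\rho\theta_t=\pa_t(\rho\theta)+\div(\rho u)\theta$ and integrate by parts in time, you do not get only $-\f{d}{dt}\int\rho\theta G\,dx$ plus the spatial error integrals you list: the time derivative must land on $G$, producing $+\int\rho\theta\,\pa_t G\,dx$ with $\pa_t G=(2\mu+\lam)\div u_t-(P-1)_t$. At this stage of the argument there is no control whatsoever on $\div u_t$ (estimates on $\na\dot u$ only appear later, in the second-order energy estimate), and none of the admissible right-hand side terms of \eqref{divest9} can absorb it; your proposed identity $\na G=\rho\dot u+\mu\curl\curl u$ helps bound $\na G$, not $\pa_t G$. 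Moreover, even setting that aside, your scheme would put $-\int\rho\theta G\,dx$ into the time-derivative bucket, i.e.\ it proves a statement with a different functional than the $H(a,\theta)$ of the lemma, and that $G$-dependent functional is not compatible with the way $H$ is later dominated by the entropy terms (Remark \ref{rmk4}) when closing Proposition \ref{energyprop}. The paper's key idea, which you are missing, is to never let $\pa_t$ fall on $G$ (or on $\theta+G$): it rewrites $\rho\theta_t=\pa_t(\rho E)-\pa_t(\f12\rho|u|^2)-T\rho_t$ and then uses the conservative total-energy equation $\eqref{cns}_3$ to replace $\pa_t(\rho E)$ by spatial divergences; after spatial integration by parts only $\na(\theta+G)$, $\na G$, $\na\theta$ appear, the cubic transport terms cancel between the pieces, and the linear terms $\int(\theta+G)\div u\,dx$ are converted to $\f{d}{dt}\int f(a)\,dx$ via \eqref{divest1} using $\theta+G=(2\mu+\lam)\div u-a-a\theta$.

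Two further points. First, your claim that leftover $\int\theta\div u\,dx$-type integrals ``will be bounded directly by $\|\theta\|_{L^\infty}\|\na u\|_{L^2}$'' is not a valid bound on $\R^3$ (the correct direct bound is $\|\theta\|_{L^2}\|\na u\|_{L^2}$), and in any case such merely linear-in-dissipation quantities are not admissible errors in \eqref{divest9}; these linear terms \emph{must} be converted into time derivatives through \eqref{divest1}--\eqref{divest2}, which is exactly why $H$ contains $\f12\rho\theta^2$ and $-f(a)$. Second, the difficulty you flag as ``hardest'' is not one: the integral $\int\rho\theta u\cdot\na a\,dx$ is handled by writing $\rho\na a=\na\bigl(a+\f12a^2\bigr)$ and integrating by parts in space, giving $-\int\bigl(a+\f12a^2\bigr)\div(\theta u)\,dx\lesssim\|\na u\|_{L^2}\|\na\theta\|_{L^2}$ plus similar terms; your ``double integration by parts in time'' is unnecessary and risks reintroducing uncontrolled $\theta_t$-terms.
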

\begin{rmk}\label{rmk4}
Thanks to the upper bound of density and temperature, we have that $|H(a,\theta)|\lesssim (\rho\ln\rho-\rho+1)+\rho(T-\ln T-1).$
\end{rmk}
\begin{proof}
Recalling the definition of effective viscosity $G,$ we can obtain that
\begin{equation}\label{divest10}
\begin{split}
-\int \rho\theta_t\div u\,dx&=-\f1{2\mu+\lam}\int \rho\theta_t(a\theta+a+\theta+G)\,dx\\
&=-\f1{2\mu+\lam}\Bigl[ \int a\rho \theta\theta_t\,dx+\int a^2\theta_t \,dx+\int a\theta_t\,dx +\int \rho\theta_t(\theta+G)\,dx \Bigr]\\
&=\f1{2\mu+\lam}\sum_{i=1}^4 II_i.
\end{split}
\end{equation}
\underline{Estimate of $II_1$.} Using $\eqref{fcns}_1,$ we have that
\begin{equation}\label{divest11}
\begin{split}
II_1&=-\f12\int a\rho\pa_t(\theta^2)\,dx=-\f12\f{d}{dt}\int a\rho\theta^2\,dx+\f12\int \theta^2(\rho+a)a_t\,dx\\
&=-\f12\f{d}{dt}\int a\rho\theta^2\,dx-\f12\int \theta^2(\rho+a)(u\cdot\na a+\rho\div u)\,dx\\
&\leq -\f12\f{d}{dt}\int a\rho\theta^2\,dx-\f12\int \theta^2u\cdot\na(a+a^2)\,dx+C\|\rho\theta\|_{L^3}\|\theta\|_{L^6}\|\na u\|_{L^2}\\
&\leq -\f12\f{d}{dt}\int a\rho\theta^2\,dx+\f12\int(a+a^2)\div(\theta^2 u)\,dx+C(\|\na\theta\|_{L^2}^2+\|\na u\|_{L^2}^2)\\
&\leq -\f12\f{d}{dt}\int a\rho\theta^2\,dx+C(\|\na\theta\|_{L^2}^2+\|\na u\|_{L^2}^2).
\end{split}
\end{equation}
\underline{Estimate of $II_2$.} Using $\eqref{fcns}_1$ again, we have
\begin{equation}\label{divest12}
\begin{split}
II_2&=-\f{d}{dt}\int a^2\theta\,dx+2\int \theta aa_t\,dx=-\f{d}{dt}\int a^2\theta\,dx-2\int a\theta(u\cdot\na a+\rho\div u)\,dx\\
&\leq -\f{d}{dt}\int a^2\theta\,dx-\int \theta u\cdot\na(a^2)\,dx+C\|a\|_{L^3}\|\theta\|_{L^6}\|\na u\|_{L^2}\\
&\leq -\f{d}{dt}\int a^2\theta\,dx+\int a^2\div(\theta u)\,dx+C(\|\na\theta\|_{L^2}^2+\|\na u\|_{L^2}^2)\\
&\leq -\f{d}{dt}\int a^2\theta\,dx+C(\|\na\theta\|_{L^2}^2+\|\na u\|_{L^2}^2).
\end{split}
\end{equation}
\underline{Estimate of $II_3$.} Using $\eqref{fcns}_1$ and \eqref{divest2}, we can obtain that
\begin{equation}\label{divest13}
\begin{split}
II_3&=-\f{d}{dt}\int a \theta\,dx+\int \theta a_t\,dx=-\f{d}{dt}\int a\theta\,dx-\int \theta\bigl(\div(au)+\div u\bigr)\,dx\\
&=-\f{d}{dt}\int a\theta\,dx+\int au\cdot\na\theta\,dx-\int \theta\div u\,dx\\
&\leq \f{d}{dt}\int \bigl(\f12 \rho\theta^2-a\theta\bigr)\,dx+C\|a\|_{L^3}\|u\|_{L^6}\|\na \theta\|_{L^2}+C(\|\na\theta\|_{L^2}^2+\|\na u\|_{L^2}^2)\\
&\leq \f{d}{dt}\int \bigl(\f12 \rho\theta^2-a\theta\bigr)\,dx+C(\|\na\theta\|_{L^2}^2+\|\na u\|_{L^2}^2).
\end{split}
\end{equation}
\underline{Estimate of $II_4$.}  Recalling $\rho E=\rho T+\f12\rho|u|^2,$ we have
\begin{equation}\label{defII4}
II_4=-\int (\rho E)_t(\theta+G)\,dx+\int (\f12\rho|u|^2)_t(\theta+G)\,dx+\int T\rho_t(\theta+G)\,dx=\sum_{i=1}^3 II_{4,i}.
\end{equation}
For $II_{4,1},$ using $\eqref{cns}_3,$ integration by parts, and \eqref{divest1}, we can get that
\begin{equation}\label{divest14}
\begin{split}
II_{4,1}&=-2\int (P-1)u\cdot\na (\theta+G)\,dx +2\int (\theta+G)\div u\,dx-\int \f12\rho|u|^2u\cdot\na(\theta+G)\,dx\\
&\quad+\int \bS(u) u\na(\theta+G)\,dx+\int \na\theta\cdot\na(\theta+G)\,dx\\
&\leq 2\int \div u\bigl((2\mu+\lam)\div u+a+a\theta\bigr)\,dx-\int \f12\rho|u|^2u\cdot\na(\theta+G)\,dx\\
&\quad +\|\na(\theta+G)\|_{L^2}\bigl(\|P-1\|_{L^3}\|u\|_{L^6}+\|u\na u\|_{L^2}+\|\na \theta\|_{L^2}\bigl)\\
&\leq -2\f{d}{dt}\int f(a)\,dx-\int \f12\rho|u|^2u\cdot\na(\theta+G)\,dx+ C\|\na G\|_{L^2}\bigl(\|\na u\|_{L^2}+\|u\na u\|_{L^2}+\|\na \theta\|_{L^2}\bigl)\\
&\quad +C\bigl(\|\na u\|_{L^2}^2+\|u\na u\|_{L^2}^2+\|\na \theta\|_{L^2}^2+\|a\|_{L^6}\|\na u\|_{L^2}\bigl).
\end{split}
\end{equation}
For $II_{4,2},$ using $\eqref{cns}_1,$ we can obtain that
\begin{equation}\label{divest15}
\begin{split}
II_{4,2}&=\int \f12 \rho_t |u|^2(\theta+G)\,dx +\int \rho u\cdot u_t (\theta+G)\,dx= -\int \f12\div(\rho u)|u|^2(\theta+G)\,dx+\int \rho u\cdot u_t (\theta+G)\,dx\\
&=\int \f12 \rho|u|^2 u\cdot\na (\theta+G)\,dx+\int \rho(u\cdot\na u+u_t)\cdot u(\theta+G)\,dx\\
&= \int \f12 \rho|u|^2 u\cdot\na (\theta+G)\,dx+\int \rho(u\cdot\na u+u_t)\cdot u\bigl( (2\mu+\lam)\div u+aT\bigr)\,dx\\
&\leq \int\f12 \rho|u|^2 u\cdot\na (\theta+G)\,dx +C(\|u\na u\|_{L^2}+\|\sqrt{\rho} u_t\|_{L^2})(\|u\na u\|_{L^2}+\|u\|_{L^6}\|aT\|_{L^3})\\
&\leq \int\f12 \rho|u|^2 u\cdot\na (\theta+G)\,dx +C(\|u\na u\|_{L^2}+\|\sqrt{\rho} u_t\|_{L^2})(\|u\na u\|_{L^2}+\|\na u\|_{L^2}).
\end{split}
\end{equation}
For $II_{4,3},$ using $\eqref{cns}_1$ again, and \eqref{divest1}, we can have that
\begin{equation}\label{divest16}
\begin{split}
II_{4,3}&=-\int T(\theta+G)\div(au) \,dx -\int (\theta+G)\div u \,dx-\int \theta (\theta+G)\div u \,dx\\
&\leq \int au\cdot \na [T(\theta +G)]\,dx-\int (\theta+G)\div u\,dx+C(\|a\|_{L^3}\|\theta\|_{L^6}+\|\na u\|_{L^2})\|\na u\|_{L^2}\\
&\leq \int Tau\cdot\na(\theta+G)\,dx +\int (au\cdot\na \theta-\div u)\bigl((2\mu+\lam)\div u-a-a\theta\bigr)\,dx+C\|\na \theta \|^2_{L^2}+C\|\na u\|^2_{L^2}\\
&\leq \f{d}{dt}\int f(a)\,dx+C(\|\na G\|_{L^2}+\|a\|_{L^6})\|\na u\|_{L^2}+C(\|\na u\|_{L^2}^2+\|u\na u\|_{L^2}^2+\|\na \theta\|_{L^2}^2).
\end{split}
\end{equation}
Plugging \eqref{divest14}, \eqref{divest15}, and \eqref{divest16} into \eqref{defII4}, we can obtain that
\begin{equation}\label{divest17}
\begin{split}
II_4&\leq -\f{d}{dt}\int f(a)\,dx+ C\|\na G\|_{L^2}\bigl(\|\na u\|_{L^2}+\|u\na u\|_{L^2}+\|\na \theta\|_{L^2}\bigl)+C\|\sqrt{\rho} u_t\|_{L^2}(\|u\na u\|_{L^2}+\|\na u\|_{L^2})\\
&\quad +C\bigl(\|\na u\|_{L^2}^2+\|u\na u\|_{L^2}^2+\|\na \theta\|_{L^2}^2+\|a\|_{L^6}\|\na u\|_{L^2}\bigl).
\end{split}
\end{equation}
Putting \eqref{divest11}, \eqref{divest12}, \eqref{divest13}, and \eqref{divest17} into \eqref{divest10}, we arrive at \eqref{divest9}.
\end{proof}

\medskip

Now we are in the position to give the first order energy estimate. More precisely, we have the following proposition.
\begin{prop}\label{energyprop}
Let $\mu>\frac12 \lam$ and $(\rho,u,T)$ be a smooth global solution of \eqref{fcns} and satisfy \eqref{Ass1}. Then $a \in L^\infty((0,+\infty);L^2\cap L^6)\cap L^2((0,+\infty); L^6),$ and $u\cdot\na u\in L^2((0,+\infty); L^2),$ $u\in L^\infty((0,+\infty);L^4\cap H^1 )\cap L^2((0,+\infty); \dot{H}^1\cap \dot{H}^2),$ $\theta \in L^\infty((0,+\infty);L^2 )\cap L^2((0,+\infty); \dot{H}^1).$ Furthermore, the following inequality holds
\begin{equation}\label{energyest1}
\begin{split}
\f{d}{dt}&\Big[ A_1\|\rho^{\f14} u\|_{L^4}^4+A_2\Big( \mu\|\na u\|_{L^2}^2+(\lam+\mu)\|\div u\|_{L^2}^2-\int (P-1)\div u\,dx- \f1{2\mu+\lam}\int  \bigl(H(a,\theta)-F(a)\bigr)\,dx\Big)\\
&+A_3\|a \|_{L^6}^2+A_4\big(\int (\rho\ln\rho-\rho+1)\,dx+\|\sqrt{\rho} u\|_{L^2}^2+\int\rho(T-\ln T-1)\,dx\big) \Big]\\
&\quad+ A_5\Big(\|\sqrt{\rho}u_t\|_{L^2}^2+\| |u| |\na u|\|_{L^2}^2+\|\na u\|_{L^2}^2 + \|\na\theta\|_{L^2}^2+\| a \|_{L^6}^2\\
&\qquad +\|\na G\|_{L^2}^2+\|\na\curl u\|_{L^2}^2+\|\na u\|_{L^6}^2+\|u\|_{L^\infty}^2\Big)  \leq0,
\end{split}
\end{equation}
where $A_i(i=1,\cdots,5)$ are positive constants depending on $\mu,$ $\lam$ and $M_1.$
\end{prop}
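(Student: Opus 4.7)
My plan is to combine four energy-level estimates---the basic identity of Proposition \ref{energylem1}, the $L^4$-velocity estimate of Proposition \ref{energylem2}, a new first-order velocity estimate from testing $\eqref{fcns}_2$ against $u_t$, and a density $L^6$-estimate from the continuity equation---weighted by positive constants $A_4, A_1, A_2, A_3$ respectively, and to close the system by absorbing every resulting cross term into the dissipation using elliptic regularity for the effective flux $G$.

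For the first-order velocity estimate I take the $L^2$ inner product of $\eqref{fcns}_2$ with $u_t$, integrate by parts, and use $\int \na P\cdot u_t\,dx = -\f{d}{dt}\int(P-1)\,\div u\,dx + \int(P-1)_t\,\div u\,dx$ to obtain
\[
\f{d}{dt}\Bigl[\tfrac{\mu}{2}\|\na u\|_{L^2}^2+\tfrac{\mu+\lam}{2}\|\div u\|_{L^2}^2-\int(P-1)\,\div u\,dx\Bigr]+\int\rho|u_t|^2\,dx = -\int\rho u\cdot\na u\cdot u_t\,dx + \int(P-1)_t\,\div u\,dx.
\]
The convective term is bounded by $\tfrac14\|\sqrt\rho u_t\|_{L^2}^2 + C\||u||\na u|\|_{L^2}^2$, the first absorbed by the $u_t$-dissipation and the second by Proposition \ref{energylem2}. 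For the pressure evolution I use $P-1=(1+a)(1+\theta)-1$, so that $(P-1)_t = Ta_t + \rho\theta_t$, and then invoke Lemmas \ref{energylem4} and \ref{energylem5}: together they convert $\int Ta_t\,\div u\,dx + \int\rho\theta_t\,\div u\,dx$ into $\tfrac{1}{2\mu+\lam}\f{d}{dt}\int (H(a,\theta)-F(a))\,dx$ plus controllable errors of the form $\|\na G\|_{L^2}\|\cdot\|$, $\|\sqrt\rho u_t\|_{L^2}\|\cdot\|$, $\|a\|_{L^6}\|\na u\|_{L^2}$ and $\|\na u\|_{L^2}^2 + \|u\na u\|_{L^2}^2 + \|\na\theta\|_{L^2}^2$. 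This is exactly what furnishes the $H(a,\theta)-F(a)$ contribution inside the $A_2$-weighted bracket.

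To supply the $\|\na G\|_{L^2}^2$, $\|\na\curl u\|_{L^2}^2$, $\|\na u\|_{L^6}^2$, $\|u\|_{L^\infty}^2$ needed in the dissipation I apply $\div$ and $\curl$ to $\eqref{fcns}_2$ to get $\Delta G = \div(\rho u_t+\rho u\cdot\na u)$ and $\mu\Delta\curl u = \curl(\rho u_t+\rho u\cdot\na u)$; Calder\'on--Zygmund then yields $\|\na G\|_{L^2}^2+\|\na\curl u\|_{L^2}^2 \lesssim \|\sqrt\rho u_t\|_{L^2}^2+\||u||\na u|\|_{L^2}^2$, and Hodge decomposition plus Sobolev embedding give $\|\na u\|_{L^6}\lesssim \|\na G\|_{L^2}+\|\na\curl u\|_{L^2}+\|a\|_{L^6}+\|\na\theta\|_{L^2}$ and $\|u\|_{L^\infty}^2\lesssim \|\na u\|_{L^2}\|\na u\|_{L^6}$ by Gagliardo--Nirenberg. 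The $A_3\|a\|_{L^6}^2$ bracket term and its matching dissipation---the single genuinely hyperbolic ingredient---are obtained by rewriting continuity as $a_t + u\cdot\na a + \rho(G+aT+\theta)/(2\mu+\lam)=0$, testing with $a|a|^4$, and exploiting the pointwise positive structure $\rho T|a|^6$; combined with the $L^\infty$ bound in \eqref{Ass1}, this produces a $\|a\|_{L^6}^6$-type dissipation which can be converted to the $\|a\|_{L^6}^2$ form modulo $\|\na G\|_{L^2}^2 + \|\na\theta\|_{L^2}^2$ errors (absorbable into the elliptic-gained dissipation above).

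Finally, choosing the hierarchy $A_4 \gg A_2 \gg A_3 \gg A_1 \gg 1$ in that order allows every cross term produced by Young's inequality to be strictly absorbed into the matching LHS dissipation, yielding \eqref{energyest1}. The main technical obstacle is precisely this coupled absorption: the Lemma \ref{energylem5} errors $\|\sqrt\rho u_t\|_{L^2}(\|u\na u\|_{L^2}+\|\na u\|_{L^2})$ must be dominated by the $\int\rho|u_t|^2$ dissipation on the LHS, the $\|\na G\|_{L^2}\|\cdot\|$ errors from Lemmas \ref{energylem4}--\ref{energylem5} must be dominated by the elliptic-gained $\|\na G\|_{L^2}^2$ (which itself derives from $\int\rho|u_t|^2$), and the $\|a\|_{L^6}\|\na u\|_{L^2}$ cross terms appearing throughout must be split so as not to consume the newly generated $\|a\|_{L^6}^2$ dissipation---this interlocked structure is what dictates the ordering and magnitude of the $A_i$'s.
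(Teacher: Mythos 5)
Your assembly of ingredients matches the paper's: testing $\eqref{fcns}_2$ against $u_t$ and feeding in Lemmas \ref{energylem4}--\ref{energylem5}, the elliptic estimates for $G$ and $\curl u$ from \eqref{eq:BD}, the $L^6$ density estimate from the rewritten continuity equation, and the weighted combination with Propositions \ref{energylem1}--\ref{energylem2}. The gap is in your closing step. You claim that a hierarchy of constants $A_4\gg A_2\gg A_3\gg A_1\gg1$ lets \emph{every} error be absorbed into the left-hand dissipation, yielding \eqref{energyest1} directly. This fails for the error inherited from Proposition \ref{energylem2}, namely $C\|\na u\|_{L^2}^2\bigl(\|a\|_{L^6}^2+\|\na\theta\|_{L^2}^2+\|\na u\|_{L^2}^2\bigr)$ (and it reappears multiplied by whatever weight you give the $L^4$ estimate, which must be \emph{large} to absorb the $\|u\na u\|_{L^2}^2$ errors of Lemmas \ref{energylem4}--\ref{energylem5} --- note this already forces the $L^4$ weight to dominate the $u_t$-estimate weight, the opposite of your ordering). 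This term is quadratic in the dissipated quantities $\|\na u\|_{L^2}^2$, $\|\na\theta\|_{L^2}^2$, $\|a\|_{L^6}^2$, while the dissipation is only linear in them; no fixed choice of the $A_i$'s can absorb it, since at this stage there is no smallness or boundedness of $\|\na u\|_{L^2}$ available (the solution is large).

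The paper resolves this with a two-pass argument that your proposal omits. First pass: keep the quartic term on the right (as in \eqref{enep8}), observe that its coefficient $\|\na u\|_{L^2}^2+\|\na\theta\|_{L^2}^2$ is integrable in time by the basic energy identity \eqref{energyest2}, and that the remaining factors are controlled by the energy functional itself; Gronwall's inequality then delivers precisely the memberships stated in the proposition ($u\in L^\infty(L^4\cap H^1)$, $a\in L^\infty(L^2)\cap L^2(L^6)$, $u\cdot\na u\in L^2(L^2)$, etc.), which your proof never establishes. Second pass: with $\|\na u\|_{L^2}$ and $\|a\|_{L^6}$ now uniformly bounded, the quartic right-hand side reduces to $C(\|\na u\|_{L^2}^2+\|\na\theta\|_{L^2}^2)$, which \emph{can} be absorbed by enlarging $A_4$ in front of the energy-identity dissipation; only then do the additional dissipation terms $\|\na u\|_{L^6}^2$ and $\|u\|_{L^\infty}^2$ get appended via \eqref{controlofu1} and interpolation, exactly as you indicate. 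So your elliptic/Hodge step and your $L^6$ step are fine, but the bootstrap through Gronwall is an essential missing idea, not an optional refinement.
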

\begin{rmk} Thanks to the energy identity, choose $A_4$ large enough and then we can derive that
\beno
&&A_1\|\rho^{\f14} u\|_{L^4}^4+A_2\Big( \mu\|\na u\|_{L^2}^2+(\lam+\mu)\|\div u\|_{L^2}^2-\int (P-1)\div u\,dx- \f1{2\mu+\lam}\int  \bigl(H(a,\theta)-F(a)\bigr)\,dx\Big)\\
&&+A_3\|a \|_{L^6}^2+A_4\big(\int (\rho\ln\rho-\rho+1)\,dx+\|\sqrt{\rho} u\|_{L^2}^2+\int\rho(T-\ln T-1)\,dx\big)\\
&&\sim \|\rho^{\f14} u\|_{L^4}^4+\|\na u\|_{L^2}^2+\int (\rho\ln\rho-\rho+1)\,dx+\|\sqrt{\rho} u\|_{L^2}^2+\int\rho(T-\ln T-1)\,dx+\|  a \|_{L^6}^2.
\eeno
\end{rmk}

\begin{proof} To derive the desired results, we split the proof into several steps.
	
	{\it Step 1: Estimate of $\na u$.} First, taking the inner product of $\eqref{fcns}_2$ and $u_t,$ we get that
\begin{equation}\label{enep1}
\begin{split}
\f{d}{dt}&(\f12 \mu\|\na u\|_{L^2}^2+\f12(\lam+\mu)\|\div u\|_{L^2}^2)+\int \rho |u_t|^2\,dx\\
&=\f{d}{dt}\int(P-1)\div u\,dx-\int (P-1)_t\div u\,dx-\int \rho u\cdot \na u\cdot u_t\,dx\\
&\leq \f{d}{dt}\int(P-1)\div u\,dx-\int Ta_t\div u\,dx-\int \rho\theta_t\div u\,dx +C\|\sqrt{\rho}u_t\|_{L^2}\|u\na u\|_{L^2}.
\end{split}
\end{equation}

Plugging \eqref{divest3} and \eqref{divest9} into \eqref{enep1}, together with Young inequality, we obtain that
\begin{equation}\label{enep2}
\begin{split}
\f{d}{dt}&\bigg(\f12 \mu\|\na u\|_{L^2}^2+\f12(\lam+\mu)\|\div u\|_{L^2}^2-\int(P-1)\div u\,dx- \f1{2\mu+\lam}\int  \bigl(H(a,\theta)-F(a)\bigr)\,dx\bigg)+ C\|\rho^{\f12} u_t\|_{L^2}^2\\
&\leq  C(\|\na u\|_{L^2}^2+\|u\cdot\na u\|_{L^2}^2+\|\na \theta\|_{L^2}^2+\|a\|_{L^6}\|\na u\|_{L^2})+C\eta \|\na G\|_{L^2}^2,
\end{split}
\end{equation}
where $\eta$ is a small constant, and the constant $C$ depends on $\mu,$ $\lam$ and $M_1.$

{\it Step 2: Improving estimate by the elliptic system.} Taking div and curl on both side of $\eqref{fcns}_2,$ we can get that
\begin{equation}
  \left\{
    \begin{aligned}
      & \Delta G=\div( \rho u_t+\rho u\cdot\na u),\\
      &-\mu\Delta (\curl u)=\curl( \rho u_t+\rho u\cdot\na u).
        \end{aligned}
  \right.
  \label{eq:BD}
\end{equation}
By the standard elliptic estimate, we have
\begin{equation}\label{enep3}
\|\na G\|_{L^2}^2+\|\na\curl u \|^2_{L^2}\leq C( \|\sqrt{\rho} u_t\|_{L^2}^2+ \|u\cdot\na u\|_{L^2}^2).
\end{equation}
Combining \eqref{enep2} and \eqref{enep3}, and choosing $\eta$ small enough, we can get that
\begin{equation}\label{enep4}
\begin{split}
\f{d}{dt}& \Big( \mu\|\na u\|_{L^2}^2+(\lam+\mu)\|\div u\|_{L^2}^2-\int(P-1)\div u\,dx- \f1{2\mu+\lam}\int  \bigl(H(a,\theta)-F(a)\bigr)\,dx\Big)\\
&+ \big(\|\sqrt{\rho}  u_t\|_{L^2}^2 +\|\na G\|_{L^2}^2+\|\na\curl u \|^2_{L^2}  \big) \leq C( \|\na u\|_{L^2}^2 +\|u\cdot\na u\|_{L^2}^2+\|\na \theta\|_{L^2}^2+\|a\|_{L^6}\|\na u\|_{L^2}),
\end{split}
\end{equation}
where $C$ is a positive constant depending on $\mu,$ $\lam$ and $M_1.$

{\it Step 3: Estimate of $a$.}  The first equation of \eqref{cns} can be rewritten by
\begin{equation}\label{enep5}
a _t+u\cdot\na a +\f1{2\mu+\lam}a +a \div u=-(\div u-\f1{2\mu+\lam}a ).
\end{equation}
Then making the inner product  to the above equation with $  |a |^4 a $, we obtain that
\beno
\f16\f{d}{dt}\|a \|_{L^6}^6+\f{1}{2\mu+\lam}\|a \|_{L^6}^6 +\f56\int \div u |a |^6\,dx\leq C\|\div u-\f1{2\mu+\lam}a \|_{L^6}\|a ^5\|_{L^{\f65}},
\eeno
which implies
\beno
\f16\f{d}{dt}\|a \|_{L^6}^6 + \f1{2\mu+\lam} \int (1+\f56 a)a^6\,dx& \leq & C \|\div u-\f1{2\mu+\lam}a \|_{L^6}\|a ^5\|_{L^{\f65}}\\
& \leq & C \|\div u-\f1{2\mu+\lam}a \|_{L^6}\|a\|^5_{L^6}.
\eeno
Dividing the above estimate by $\|a \|_{L^6}^4,$ and recalling $1+\f56 a\geq \f16$, we get that
\ben\label{est:a}
\f{d}{dt}\|a \|_{L^6}^2 +  \| a \|_{L^6}^2     \leq  C \|\div u-\f1{2\mu+\lam}a \|_{L^6}^2 .
\een
Noting that $\div u-\f1{2\mu+\lam}a=\f{G+\rho\theta}{2\mu+\lam},$ we arrive that
\ben\label{est:a1}
\f{d}{dt}\|a \|_{L^6}^2 +  \| a \|_{L^6}^2     \leq  C (\|G\|_{L^6}^2+\|\rho\theta\|_{L^6}^2)\leq C(\|\na G\|_{L^2}^2+\|\na\theta\|_{L^2}^2) .
\een

{\it Step 4: Closing the energy estimates.}  Combining \eqref{energyest2}, \eqref{energyest3}, \eqref{enep4} and \eqref{est:a1}, we can get
\begin{equation}\label{enep8}
\begin{split}
\f{d}{dt}&\Big[ A_1\|\rho^{\f14} u\|_{L^4}^4+A_2\Big( \mu\|\na u\|_{L^2}^2+(\lam+\mu)\|\div u\|_{L^2}^2-\int (P-1)\div u\,dx- \f1{2\mu+\lam}\int  \bigl(H(a,\theta)-F(a)\bigr)\,dx\Big)\\
&+A_3\|a \|_{L^6}^2+A_4\big(\int (\rho\ln\rho-\rho+1)\,dx+\|\sqrt{\rho} u\|_{L^2}^2+\int\rho(T-\ln T-1)\,dx\big) \Big]\\
&\quad+ A_5\Big(\|\sqrt{\rho}u_t\|_{L^2}^2+\| |u| |\na u|\|_{L^2}^2+\|\na u\|_{L^2}^2 + \|\na\theta\|_{L^2}^2+\|\na G\|_{L^2}^2+\|\na\curl u\|_{L^2}^2+\| a \|_{L^6}^2\Big) \\
&\qquad\leq A_6( \| a \|_{L^6}^2 +\|\na u\|_{L^2}^2+\|\na\theta\|_{L^2}^2) \|\na u\|_{L^2}^2,
\end{split}
\end{equation}
where $A_i(i=1,\cdots,6)$ are positive constants depending on $\lam,$ $\mu$ and $M_1$, and which ensure that the term $A_2\Bigl(\int (P-1)\div u\,dx+ \f1{2\mu+\lam}\int  \bigl(H(a,\theta)-F(a)\bigr)\,dx\Bigr)$ can be controlled by $A_2(\lam+\mu)\|  \div u\|_{L^2}^2$ and $A_4\Bigl(\int (\rho\ln\rho-\rho+1)\,dx+\int\rho(T-\ln T-1)\,dx\Bigr).$ By Gronwall's inequality, the above estimate ensures that $u\in L^\infty((0,+\infty);L^4\cap H^1)\cap L^2((0,+\infty); \dot{H}^1)$, $u_t\in L^2((0,+\infty); L^2),$ $a \in L^\infty((0,+\infty); L^2)\cap L^2((0,+\infty); L^6),$ $u\cdot\na u\in L^2((0,+\infty); L^2),$ $\theta \in L^\infty((0,+\infty);L^2)\cap L^2((0,+\infty); \dot{H}^1).$

Using these estimates, we can improve the estimate \eqref{enep8}.  Notice that the term in the righthand side of  \eqref{enep8} can be bounded by $C(\|\na u\|_{L^2}^2+\|\na \theta\|_{L^2}^2)$. Then thanks to the energy identity \eqref{energyest2},  the dissipation inequality in the proposition is followed by the fact that for $i\ge1$ and $p\in[2,6],$
\begin{equation}\label{controlofu1}
\begin{split}
 \|\na^i u\|_{L^p}&\leq \|\na^{i-1} \curl u\|_{L^p}+ \|\na^{i-1}\bigl(\div u-\f1{2\mu+\lam}(P-1) \bigr)\|_{L^p}+\f1{2\mu+\lam}\|\na^{i-1}(P-1) \|_{L^p}\\
\|\na^i u\|_{L^6}&\leq\|\na^i \curl u\|_{L^2}+ \f1{2\mu+\lam}\|\na^i G\|_{L^2}+\f1{2\mu+\lam}\|\na^{i-1}(P-1) \|_{L^6}.
\end{split}
\end{equation}
And the last estimate means $\na u\in L^2((0,+\infty); L^6).$ By interpolation, we have
$$\|u\|_{L^2(L^\infty)}^2\leq C\|\na u\|_{L^2(L^2)}\|\na u\|_{L^2(L^6)},$$
which completes the proof of \eqref{energyest1}.
\end{proof}

\medskip

\subsubsection{Improving regularity estimate for $u$ and $\theta$} In order to get the dissipation estimate for $a$, we first improve the regularity estimates for $u$ and $\theta$ in this subsection.  We still assume that $(\rho,u,T)$ is a global and smooth solution of \eqref{fcns}. We set up some notations. For a function or vector field (or even a $3\times 3$ matrix) $f(t,x)$, the material derivative $\dot{f}$ is defined by
\beno
\dot{f}= f_t+u\cdot\nabla f,
\eeno
and $\div(f\otimes u)= \sum_{j=1}^3\partial_j(fu_j)$.
For two matrices $A=(a_{ij})_{3\times 3}$ and $B=(b_{ij})_{3\times 3}$, we use the notation $A:B=\sum_{i,j=1}^3a_{ij}b_{ij}$ and $AB$ is as usual the multiplication of matrix.

\begin{prop}\label{Prop41}
Let $\mu>\frac12 \lam$ and $(\rho,u,T)$ be a global and smooth solution of \eqref{fcns} and satisfy \eqref{Ass1} and the admissible condition \eqref{admissible}. Then there exist constants $A_i$(i=1,\dots,6) such that
\begin{equation}\label{energyest4}
\begin{split}
&\frac{d}{dt}\Big[ A_1\|\rho^{\f14} u\|_{L^4}^4+A_2\Big( \mu\|\na u\|_{L^2}^2+(\lam+\mu)\|\div u\|_{L^2}^2-\int (P-1)\div u\,dx- \f1{2\mu+\lam}\int  \bigl(H(a,\theta)-F(a)\bigr)\,dx\Big)\\
&\quad+A_3\|  a \|_{L^6}^2+A_4\big(\int (\rho\ln\rho-\rho+1)\,dx+\|\sqrt{\rho} u\|_{L^2}^2+\int\rho(T-\ln T-1)\,dx\big)+A_5\Big(\|\sqrt{\rho}\dot{u}\|_{L^2}^2+\|\na\theta\|_{L^2}^2\\
&\qquad-\int \theta(2\mu|D u|^2+\lam(\div u)^2)\,dx\Big)\Big]
+A_6\Big( \|\sqrt{\rho}u_t\|_{L^2}^2+\| |u| |\na u|\|_{L^2}^2+\|\na u\|_{L^2}^2 + \|\na\theta\|_{L^2}^2 +\|\na G\|_{L^2}^2\\
&\quad\qquad+\|\na\curl u\|_{L^2}^2+\| a \|_{L^6}^2+\|\na u\|_{L^6}^2
+\|\na \dot{u}\|_{L^2}^2+\|\sqrt{\rho}\dot\theta\|_{L^2}^2+\|G\|_{W^{1,6}}^2+\|\curl u\|_{W^{1,6} }^2 +\|\Delta\theta\|_{L^2}^2 \Big)\leq 0.
\end{split}
\end{equation}

\end{prop}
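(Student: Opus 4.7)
\textbf{Proof proposal for Proposition \ref{Prop41}.}

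The plan is to augment the estimate in Proposition \ref{energyprop} with two Hoff--type material-derivative estimates — one for $\dot u$ producing the new dissipation $\|\nabla\dot u\|_{L^2}^2$, and one for the temperature producing $\|\sqrt\rho\dot\theta\|_{L^2}^2$ — and then to feed those bounds into the elliptic systems already present in the paper to recover the $W^{1,6}$ control of $G$ and $\curl u$ as well as the $L^2$ control of $\Delta\theta$. The admissible condition \eqref{admissible} is precisely what guarantees $\dot u(0),\dot\theta(0)\in L^2$, so the Gronwall argument starts from finite data.

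For the velocity, I would apply the operator $\partial_t+\div(\cdot\, u)$ to $\eqref{fcns}_2$ to produce a transport--diffusion equation for $\rho\dot u$, test it with $\dot u$, and integrate by parts. The principal part yields $\tfrac12\tfrac{d}{dt}\|\sqrt\rho\dot u\|_{L^2}^2+\mu\|\nabla\dot u\|_{L^2}^2+(\mu+\lambda)\|\div\dot u\|_{L^2}^2$, while the commutator remainders from the convection and the Laplacian contribute terms structurally of the form $\|\nabla u\|_{L^4}^2\|\nabla\dot u\|_{L^2}$ and $\|\nabla u\|_{L^3}\|\nabla\dot u\|_{L^2}\|\nabla u\|_{L^6}$, all of which are absorbed using Young's inequality together with the $L^\infty_t H^1$ bound on $u$ from Proposition \ref{energyprop}. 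The pressure--time term $-\int\partial_t P\,\div\dot u$ is dealt with exactly as in Lemmas \ref{energylem4}--\ref{energylem5}: expanding $P-1=aT+\theta$ and using $\eqref{fcns}_1$ and the effective viscous flux $G$ produces the characteristic $\tfrac{d}{dt}$-corrections that match the ``potential energy'' piece already appearing in \eqref{energyest4}.

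For the temperature I would multiply $\eqref{fcns}_3$ (rewritten as $\rho\dot\theta + \rho T\div u = 2\mu|Du|^2+\lambda(\div u)^2+\Delta\theta$) by $\dot\theta$. The Laplacian contributes $-\tfrac12\tfrac{d}{dt}\|\nabla\theta\|_{L^2}^2$ plus the transport commutator $\int\Delta\theta\,u\cdot\nabla\theta$, while the viscous right-hand side, after writing $\dot\theta=\theta_t+u\cdot\nabla\theta$, yields $\tfrac{d}{dt}\int\theta(2\mu|Du|^2+\lambda(\div u)^2)\,dx$ together with a residue $-\int\theta\,\partial_t(2\mu|Du|^2+\lambda(\div u)^2)\,dx$; this residue is the delicate piece and is exactly why the combination $\|\nabla\theta\|_{L^2}^2-\int\theta(2\mu|Du|^2+\lambda(\div u)^2)\,dx$ has to appear on the left of \eqref{energyest4} rather than as clean dissipation. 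I would handle the residue by expanding $\partial_t\nabla u$ in terms of $\nabla u_t$, so that it is controlled by $\|\theta\|_{L^\infty}\|\nabla u\|_{L^4}\|\nabla u_t\|_{L^2}$ and hence, via $\|\nabla u_t\|_{L^2}\lesssim \|\nabla\dot u\|_{L^2}+\|\nabla(u\cdot\nabla u)\|_{L^2}$, is absorbed into a small multiple of $\|\nabla\dot u\|_{L^2}^2$ with a harmless prefactor using \eqref{Ass1}.

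For the higher regularity, the elliptic system \eqref{eq:BD} already reads $\Delta G=\div(\rho\dot u)$ and $-\mu\Delta\curl u=\curl(\rho\dot u)$, so the standard $W^{1,p}$ estimate gives $\|G\|_{W^{1,6}}+\|\curl u\|_{W^{1,6}}\lesssim \|\rho\dot u\|_{L^6}\lesssim \|\nabla\dot u\|_{L^2}$ (by Sobolev and the uniform bound on $\rho$), and then $\|\nabla u\|_{L^6}$ is recovered from \eqref{controlofu1}. Similarly $\|\Delta\theta\|_{L^2}$ is obtained by reading off the temperature equation as $\Delta\theta=\rho\dot\theta+\rho T\div u-2\mu|Du|^2-\lambda(\div u)^2$ and bounding the right-hand side by $\|\sqrt\rho\dot\theta\|_{L^2}$ plus terms already in the dissipation budget. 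Adding a small multiple of the resulting identities to the estimate of Proposition \ref{energyprop}, choosing $A_5$ small enough that all right-hand-side contributions are absorbed into the enlarged dissipation, yields \eqref{energyest4}. The main obstacle, as indicated above, is the residual coupling term $\int\theta\,\partial_t(2\mu|Du|^2+\lambda(\div u)^2)$; its control requires the just-established $\|\nabla\dot u\|_{L^2}$ dissipation, which is why the velocity and temperature estimates have to be closed simultaneously rather than in sequence.
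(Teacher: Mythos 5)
Your overall architecture is essentially the paper's: a Hoff-type estimate for $\dot u$ obtained by applying $\pa_t+\div(\cdot\,u)$ to $\eqref{fcns}_2$, a temperature estimate that produces the correction $-\int\theta\bigl(2\mu|D u|^2+\lam(\div u)^2\bigr)dx$ on the left, the elliptic system \eqref{eq:BD} and the heat equation to recover $\|G\|_{W^{1,6}}$, $\|\curl u\|_{W^{1,6}}$, $\|\na u\|_{L^6}$ and $\|\Delta\theta\|_{L^2}$, all combined with Proposition \ref{energyprop} via Gronwall. However, two of your key steps do not close as written. The main gap is your treatment of the residue $-\int\theta\,\pa_t\bigl(2\mu|D u|^2+\lam(\div u)^2\bigr)dx$: bounding it by $\|\theta\|_{L^\infty}\|\na u\|_{L^4}\|\na u_t\|_{L^2}$ and then using $\|\na u_t\|_{L^2}\lesssim\|\na\dot u\|_{L^2}+\|\na(u\cdot\na u)\|_{L^2}$ introduces $\|u\cdot\na^2 u\|_{L^2}$, i.e.\ full second derivatives of $u$ in $L^2$. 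These are not in the available dissipation budget at this stage: by the elliptic decomposition, $\|\na^2u\|_{L^2}$ is controlled only up to $\|\na P\|_{L^2}$, hence up to $\|\na a\|_{L^2}$, and the $\na a$ estimate is Proposition \ref{prop_density}, which is proved \emph{after} -- and by means of -- the present proposition, so your absorption argument is circular. The paper avoids this by substituting $u_t=\dot u-u\cdot\na u$ \emph{inside} the integral before applying H\"older and integrating the transport part by parts against $\theta$ (the term $IV_3$ in its proof), so that only $|\na u|^2$-type quantities remain and the residue is bounded by $\eta\|\na\dot u\|_{L^2}^2+C\bigl(\|\na u\|_{L^2}+\|\na\theta\|_{L^2}\bigr)\|\na u\|_{L^4}^2$ plus lower-order terms.

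The second problematic step is the pressure--time term $\int P_t\,\div\dot u\,dx$. Handling it ``exactly as in Lemmas \ref{energylem4}--\ref{energylem5}'' does not transfer, because those computations exploit the specific multiplier $\div u$ (substitution of $\div u=\f1{2\mu+\lam}(G+P-1)$, integrations by parts landing on $\na\div u=\f1{2\mu+\lam}(\na G+\na P)$, etc.); repeating them with $\div\dot u$ produces either $\na\div\dot u$ (second derivatives of $\dot u$) or explicit $\na a$-terms, neither of which is controlled here. The paper instead keeps $\na P_t$ together with the commutator $\div(\na P\otimes u)$ generated by the material-derivative formulation, so that after integration by parts one obtains $\int\bigl[(\rho T)_t+\div(\rho Tu)\bigr]\div\dot u\,dx=\int\rho\dot\theta\,\div\dot u\,dx$, which is absorbed by the temperature dissipation via $\|\sqrt\rho\theta_t\|_{L^2}^2+\|u\cdot\na\theta\|_{L^2}^2$; in particular the $H(a,\theta)-F(a)$ corrections in \eqref{energyest4} are inherited from the $A_2$-block of Proposition \ref{energyprop}, not produced at the $\dot u$-level as you suggest. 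Finally, a smaller point: $\|\na u\|_{L^4}^4$ cannot simply be absorbed by Young's inequality and the $L^\infty_tH^1$ bound; one needs $\|\na u\|_{L^4}^4\lesssim\|\na u\|_{L^6}^2\bigl(1+\|\sqrt\rho\dot u\|_{L^2}^2\bigr)$ through $\eqref{controlofu1}_2$ and the flux bound $\|G\|_{L^6}+\|\curl u\|_{L^6}\lesssim\|\sqrt\rho\dot u\|_{L^2}$, and then Gronwall with $\|\na u\|_{L^6}^2\in L^1_t$ from Proposition \ref{energyprop}.
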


\begin{proof} To derive the desired results, we split the proof into several steps.

{\it Step 1: Estimate of $\dot{u}$.} We rewrite the second equation of \eqref{fcns} as
\beno
\rho\dot{u}+\nabla P -\div\bS(u)=0.
\eeno
Then it is not difficult to check that
\ben\label{eq41}
\begin{split}
&\rho \dot{u}_t+\rho u\cdot \nabla \dot{u}+\nabla P_t+\div(\nabla P \otimes u)\\
&\quad=\mu\big[\Delta u_t+\div(\Delta u\otimes u)\big]+
(\lambda+\mu)\big[\nabla\div u_{t}+\div((\nabla\div u)\otimes u)\big].
\end{split}
\een
By the  energy estimate, we derive that
\ben\label{eq42}
&&\f{d}{dt}\int \f{1}{2}\rho|\dot{u}|^2\,dx\underbrace{-\mu\int \dot{u}\cdot\big(\Delta
u_t+\div(\Delta u\otimes u)\big)\,dx}_{\eqdefa III_1}\nonumber\\
&&\quad-(\lambda+\mu)\underbrace{\int \dot{u}\cdot\big((\nabla
\div u_{t})+\div((\nabla\div u)\otimes u))\big)\,dx}_{\eqdefa III_2}\\
&&=\underbrace{\int  P_{t}\div\dot{u}+(\dot{u}\cdot\nabla u)\cdot\nabla P\, dx}_{\eqdefa III_3}.\nonumber
\een
\underline{Estimate of $III_1$.} It is easy to check that
\beno
\begin{split}
&-\int \dot{u}\cdot\big(\Delta
u_t+\div(\Delta u\otimes u)\big)dx=\int \left[\nabla\dot{u}:\nabla u_t+ u\otimes\Delta u:\nabla \dot{u}\right]dx\\
&=\int \Big[|\nabla\dot{u}|^2-\big((\nabla u\nabla u)+(u\cdot\nabla) \nabla u\big):\nabla\dot{u}-\nabla(u\cdot\nabla\dot{u}):\nabla u\Big]dx\\
&=\int \Big[|\nabla\dot{u}|^2-(\nabla u\nabla u):\nabla\dot{u}+\big((u\cdot\nabla)\nabla\dot{u}\big):\nabla u
-(\nabla u\nabla\dot{u}):\nabla u-\big((u\cdot\nabla)\nabla\dot{u}\big):\nabla u\Big]dx\\
&\geq \int \left[\f{3}{4}|\nabla\dot{u}|^2-C|\nabla u|^4\right]dx.
\end{split}
\eeno
\underline{Estimate of $III_2$.} Observe that
\beno
\begin{split}
&\div\big((\nabla\div u)\otimes u\big)=\nabla(u\cdot\nabla\div u)-\div(\div u\nabla\otimes u)+\nabla(\div u)^2,\\
&\div\dot{u}=\div u_t+\div(u\cdot\nabla u)=\div u_t+u\cdot\nabla\div u+\nabla u:(\nabla u)',
\end{split}
\eeno
then we get
\beno
\begin{split}
&-\int \dot{u}\cdot\Big[\nabla\div
u_{t}+\div\big((\nabla\div u)\otimes u\big)\Big]dx\\
&=\int \Big[\div\dot{u}\div u_t+\div\dot{u}(u\cdot\nabla\div u)
-\div u(\nabla\dot{u})':\nabla u+\div\dot{u}(\div u)^2\Big]dx\\
&=\int \Big[|\div\dot{u}|^2-\div\dot{u}\nabla u:(\nabla u)'-\div u(\nabla\dot{u})^T:\nabla u+\div\dot{u}(\div u)^2\Big]dx\\
&\geq\int \Big[\f{1}{2}|\div\dot{u}|^2-\f{1}{4} |\nabla\dot{u}|^2-C|\nabla u|^4\Big]dx.
\end{split}
\eeno

\underline{Estimate of $III_3$.} We have
\beno
\begin{split}
III_3&=\int \bigl((\rho T)_t\div\dot{u}-P(\na u)':\na\dot{u}-\rho Tu\cdot\na\div\dot{u}\bigr)\, dx\\
&=\int \bigl((\rho T)_t\div\dot{u}+\div(\rho Tu)\div\dot{u}-P(\na u)':\na\dot{u}\bigr)\, dx=\int \bigl(\rho \dot{\theta}\div\dot{u}-P(\na u)':\na\dot{u}\bigr)\, dx\\
&\leq C\|\sqrt{\rho}\dot{\theta}\|_{L^2}\|\div \dot{u}\|_{L^2}+\|P\|_{L^\infty}\|\nabla u\|_{L^2  }\|\nabla\dot{u}\|_{L^2 } .
\end{split}
\eeno

Substituting these estimates into (\ref{eq42}) yields
\ben\label{eq43}
\begin{split}
&\f{d}{dt}\int \rho|\dot{u}|^2\,dx+\mu\int |\nabla\dot{u}|^2\,dx
+(\lambda+\mu)\int |\div\dot{u}|^2\,dx \\
&\leq C(\|\na u\|_{L^4}^4+\|\nabla {u}\|^2_{L^2 }+\|\sqrt{\rho}\dot{\theta}\|_{L^2}^2)\leq C(\|\na u\|_{L^4}^4+\|\nabla {u}\|^2_{L^2 }+\|\sqrt{\rho}\theta_t\|_{L^2}^2+\|u\cdot\na\theta\|_{L^2}^2).
\end{split}
\een

{\it Step 2: Estimate of $\theta_t$.} Multiply $\eqref{fcns}_3$ by $\theta_t$ and take integration to obtain
\beno
\begin{split}
&\f12\f{d}{dt}\int |\na\theta|^2\,dx+\int \rho\theta_t^2\,dx\\
=&-\int\rho u\cdot\na\theta \theta_t\,dx-\int\rho \theta\div u\theta_t\,dx+\int (2\mu|D u|^2+\lam(\div u)^2)\theta_t\,dx\\
\leq &C\|{\rho}^\f12\theta_t\|_{L^2}(\|u\cdot\na\theta\|_{L^2}+\|\na u\|_{L^2}) +\int (2\mu|D u|^2+\lam(\div u)^2)\theta_t\,dx\\
\leq &\eta\|{\rho}^\f12\theta_t\|_{L^2}^2+C_\eta(\|u\cdot\na\theta\|_{L^2}^2+\|\na u\|_{L^2}^2)+\int (2\mu|D u|^2+\lam(\div u)^2)\theta_t\,dx.
\end{split}
\eeno
Then we can estimate the last term in the following\beno
\begin{split}
&\int (2\mu|D u|^2+\lam(\div u)^2)\theta_t\,dx\\
=&\f{d}{dt}\int \theta(2\mu|D u|^2+\lam(\div u)^2)\,dx-\int \theta\big(4\mu (D u):(D u_t)+2\lam (\div u)(\div u_t)\big)\,dx\\
=&\f{d}{dt}\int \theta(2\mu|D u|^2+\lam(\div u)^2)\,dx-\int \theta\big(4\mu (D u):(D \dot u)+2\lam (\div u) (\div \dot u)\big)\,dx\\
&\quad+\int \theta\big(4\mu (D u):(D (u\cdot \na u)) +2\lam (\div u )(\div (u\cdot \na u))\big)\,dx\overset{def}{=}\sum_{i=1}^3IV_i.
\end{split}
\eeno
It is not difficult to derive that\beno
\begin{split}
IV_2
\leq& C\|\na u\|_{L^2}\|\na \dot u\|_{L^2}
\leq \eta\|\na \dot u\|_{L^2}^2+C_\eta\|\na u\|_{L^2}^2,
\end{split}
\eeno
\beno
\begin{split}
IV_3
=&\int \theta\big(4\mu (D u):  D(u\cdot \na) u +2\lam (\div u) \div (u\cdot \na) u)\big)\,dx\\
&\quad+\int \theta\big(4\mu (D u): (u\cdot \na (D u)) +2\lam (\div u)  (u\cdot \na (\div u))\big)\,dx\\
=&\int \theta\big(4\mu (D u): D(u\cdot \na) u +2\lam (\div u)  \div (u\cdot \na) u\big)\,dx-\int \big(\theta (\div u)+u\cdot \na \theta)(2\mu |D u|^2+\lam (\div u)^2\big)\,dx\\
\leq &C(\|\na u\|_{L^2}+\|\na \theta\|_{L^2})\|\na u\|_{L^4}^2
\leq C(\|\na u\|_{L^2}^2+\|\na \theta\|_{L^2}^2+\|\na u\|_{L^4}^4).
\end{split}
\eeno

Substituting these above estimates, choosing $\eta$ small enough, then we get
\ben\label{eq44}
\begin{split}
\f{d}{dt}&\int \Big[\f12|\na\theta|^2- \theta\big(2\mu|D u|^2+\lam(\div u)^2\big)\Big]\,dx+\int \rho\theta_t^2\,dx\\
&\leq \eta\|\na \dot u\|_{L^2}^2+C(\|u\cdot\na\theta\|_{L^2}^2+\|\na \theta\|_{L^2}^2+\|\na u\|_{L^2}^2+\|\na u\|_{L^4}^4),
\end{split}
\een
where $\eta$ is a small constant.

{\it Step 3: Closing the energy estimates.} Combining \eqref{eq43} and \eqref{eq44}, and choosing $\eta$ small enough, we get that
\ben\label{eq45}
\begin{split}
&\f{d}{dt}\int \Big[\rho|\dot{u}|^2+C|\na\theta|^2- \theta\big(2\mu|D u|^2+\lam(\div u)^2\big)\Big]\,dx+\int \Bigl(\mu|\nabla\dot{u}|^2+(\lambda+\mu) |\div\dot{u}|^2+C\rho\theta_t^2\Bigr)\,dx\\
&\leq C(\|\na u\|_{L^2}^2+\|\na u\|_{L^4}^4+\|\na \theta\|_{L^2}^2+\|u\|_{L^\infty}^2\|\na\theta\|_{L^2}^2).
\end{split}
\een
To conclude the estimate by Gronwall's inequality, we use the term $\|\sqrt{\rho}\dot{u}\|_{L^2 }$ to control $\|\nabla u\|_{L^4 }$. By Proposition \ref{energyprop} and \eqref{enep3}, we have
\beno
&&\|\na u\|_{L^\infty(0,\infty;L^2)}+\|P-1\|_{L^\infty(0,\infty;L^6)}\leq C, \\
&& \|\curl u\|_{L^6}+  \|G \|_{L^6}\leq \|\rho\dot{u}\|_{L^2}\leq C  \|\sqrt{\rho}\dot{u}\|_{L^2},
\eeno
which together with $\eqref{controlofu1}_2$ imply that
\begin{eqnarray*}
\|\nabla u\|^4_{L^4 }&\leq&\|\nabla u\|_{L^2 }\|\nabla
u\|^3_{L^6 }\leq C\|\nabla u\|_{L^6 }^2\|\nabla
u\|_{L^6 }\\
&\leq& C\|\nabla u\|^2_{L^6 }\big(\|\curl u\|_{L^6 }+\|G\|_{L^6 }+\|P-1\|_{L^6}\big)\\
&\leq& C\|\nabla u\|^2_{L^6 }\Big(1+\|\sqrt{\rho}\dot{u}\|_{L^2 }\Big)\le C\|\nabla u\|^2_{L^6 }\Big(1+\|\sqrt{\rho}\dot{u}\|_{L^2 }^2\Big).
\end{eqnarray*}

Substituting this estimate into \eqref{eq45} and noting that  $\|\nabla
u(t)\|^2_{L^6 }, \|u(t)\|^2_{L^\infty} \in L^1(0,\infty),$ by Proposition \ref{energyprop}, we get by
Gronwall's inequality that
\begin{eqnarray}\label{eq:w-high}
\int (\rho|\dot{u}|^2+C|\na\theta|^2)\,dx+\int^\infty_0\int ( |\nabla
\dot{u}|^2+C\rho\theta_t^2)\,dxdt\leq C,
\end{eqnarray}
with $C$ depending only on $\|\rho\|_{L^\infty}$ and $\rho_0,u_0,T_0$. Moreover, back to $\eqref{fcns}_3,$ we have $\Delta \theta\in L^2((0,\infty);L^2).$

By using \eqref{eq:w-high}, \eqref{eq45} can be improved as
\beno
&&\f{d}{dt}\int (\rho|\dot{u}|^2+C|\na\theta|^2)\,dx+\int ( |\nabla
\dot{u}|^2+C\rho\theta_t^2)\,dx\leq
C(\|\na u\|_{L^6}^2+\|\nabla {u}\|^2_{L^2}+\|u\|_{L^\infty}^2),\eeno
from which together with  \eqref{energyest1}, \eqref{eq:BD} and Sobolev imbedding theorem implies \eqref{energyest4}.

\end{proof}

\medskip

\subsubsection{Estimate for the propagation of $\na a$}  In this subsection, we want to give the proof to the upper bound of $\|\na u\|_{L^2((0,+\infty);L^\infty) }$ which in turn gives the estimates for propagation of $\na a$.   Here is the main result of this subsection:
\begin{prop}\label{prop_density}
Let $0<\alpha<1,$ $\mu>\frac12 \lam$ and $(\rho,u,T)$ be a global and smooth solution of \eqref{fcns} and satisfy \eqref{Ass1}--\eqref{Ass2}.  Initial data $(\rho_0,u_0,T_0)$ verify the admissible condition \eqref{admissible}.
Then
\ben\label{eq:blow}
 \| a\|_{L^\infty((0,+\infty);W^{1,6})\cap L^2((0,+\infty);W^{1,6})}+\|\na u\|_{L^2((0,+\infty);L^\infty) }\leq C,
\een
where $C$ depends on the initial data $(\rho_0, u_0, T_0)$ and $M$.  Moreover,
 \ben\label{L2naa}
	\f{d}{dt}\|\na a\|_{L^2}^2+\f{1}{4(2\mu+\lam)}\|  \na a\|_{L^2}^2  \leq
  C( \|\na \dot{u}\|_{L^2}^2+\|\dot{u}\|^2_{L^2}+\|\na\theta\|_{L^2}^2+\|\theta\|_{\dot{H}^2}^2+\|a\|_{L^6}^2).
  \een

 \end{prop}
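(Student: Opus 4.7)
The plan is to first establish the $L^2_t(L^\infty_x)$ bound on $\na u$, then derive the damped ODE for $\|\na a\|_{L^2}^2$ by applying $\na$ to the rewritten continuity equation \eqref{enep5}, and finally upgrade to the $W^{1,6}$ bound via an $L^6$ analogue. The $L^\infty_x$ bound on $\na u$ is where the H\"older assumption \eqref{Ass2} becomes essential: decomposing $\na u$ through the effective viscous flux $G$ and the vorticity $\curl u$, Proposition~\ref{Prop41} already gives $\|G\|_{L^2_t(W^{1,6})}, \|\curl u\|_{L^2_t(W^{1,6})}\leq C$, which via the Sobolev embedding $W^{1,6}\hookrightarrow L^\infty$ handles two of the three pieces of $\na u$. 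The third piece is a singular-integral image of $P-1=a+\theta+a\theta$; since Riesz-type operators do not map $L^\infty$ to $L^\infty$, the $C^\alpha$ bound on $\rho$ is used to upgrade $\|P-1\|_{C^\alpha}\leq C(M_1,M_2)$ and close the $L^\infty$ control uniformly in time.

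With $\|\na u\|_{L^2_t(L^\infty)}$ in hand, I would apply $\na$ to \eqref{enep5}, test against $\na a$, and integrate by parts on the transport term (this produces the skew contribution $-\tfrac12\int(\div u)|\na a|^2\,dx$). The key algebraic step is substituting $(2\mu+\lam)\na\div u=\na G + T\na a+\rho\na\theta$ into $\int a\,\na\div u\cdot\na a\,dx$: the damping from \eqref{enep5} delivers $\frac{1}{2\mu+\lam}\|\na a\|_{L^2}^2$ on the LHS, corrected by the lower-order quantity $\frac{1}{2\mu+\lam}\int aT|\na a|^2\,dx$, whose size is controlled by $\|a\|_{L^\infty}$ and $\|T\|_{L^\infty}\leq M$. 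The remaining cross term $\int(\na u)^T\na a\cdot\na a$ is absorbed by Young's inequality using the $L^2_t(L^\infty)$ bound on $\na u$, while the right-hand-side contributions are handled by converting $\|\na G\|_{L^2}^2\lesssim\|\dot u\|_{L^2}^2$ via \eqref{eq:BD} and bounding $\|\na(\rho\theta)\|_{L^2}$ by $\|\na\theta\|_{L^2}+\|a\|_{L^6}$ and the $\dot H^2$ norm of $\theta$. After collecting, the coercivity drops to the clean $\frac{1}{4(2\mu+\lam)}\|\na a\|_{L^2}^2$ stated in \eqref{L2naa}.

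The $L^\infty_t(W^{1,6})\cap L^2_t(W^{1,6})$ bound on $a$ then follows by running the parallel argument with $|\na a|^4\na a$ as the test field: the damping becomes $\frac{6}{2\mu+\lam}\|\na a\|_{L^6}^6$, the cross terms are again controlled using $\|\na u\|_{L^\infty}$, and a Gronwall argument closes the estimate because each of $\|\na\dot u\|_{L^2}^2$, $\|\dot u\|_{L^2}^2$, $\|\Delta\theta\|_{L^2}^2$, $\|\na\theta\|_{L^2}^2$ and $\|a\|_{L^6}^2$ is integrable on $[0,\infty)$ by Proposition~\ref{Prop41}. Combining with \eqref{L2naa} yields \eqref{eq:blow}.

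The main obstacle is the first step. The bounds of Propositions~\ref{energyprop}--\ref{Prop41} alone leave the singular-integral image of $P-1$ out of reach in $L^\infty_x$ by the familiar logarithmic margin for Lam\'e systems with merely bounded pressure, and the H\"older assumption \eqref{Ass2} is precisely the input that removes this obstruction. A subsidiary difficulty is checking that the absorption in the $\na a$ estimate indeed leaves the clean constant $\frac{1}{4(2\mu+\lam)}$, which amounts to a quantitative verification that $\|aT\|_{L^\infty}\leq M^2$ does not swallow the damping coming from \eqref{enep5} --- an algebraic check driven by the uniform upper bound $M$.
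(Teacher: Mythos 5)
Your overall architecture matches the paper's (decompose $\na u$ through $\curl u$, $G$ and a singular--integral image of $P-1$; test the gradient of the continuity equation in $L^2$ and $L^6$; convert $G$, $\curl u$ norms into $\dot u$ norms via \eqref{eq:BD}), but there are two genuine gaps at exactly the two places where the paper's specific devices are needed.

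First, your treatment of the pressure part of $\na u$ does not deliver what \eqref{eq:blow} requires. You claim $\|P-1\|_{C^\alpha}\leq C(M_1,M_2)$, but $P-1=a+\theta+a\theta$ contains the temperature, and \eqref{Ass1} gives only an $L^\infty$ bound on $T$; no H\"older bound on $T$ is assumed, so this estimate is not available. More seriously, even granting a uniform-in-time $C^\alpha$ bound, it only yields a uniform-in-time $L^\infty$ bound for that piece of $\na u$ (``close the $L^\infty$ control uniformly in time''), whereas the statement and the subsequent Gronwall argument for the $W^{1,6}$ bound of $a$ need $\|\na u\|_{L^\infty}^2$ to be \emph{integrable on $(0,\infty)$}: a constant is not in $L^1(0,\infty)$. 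The paper resolves both points with the Littlewood--Paley interpolation $\|\na\Lambda^{-1}a\|_{L^\infty}\lesssim\|a\|_{L^6}^{\beta}\|a\|_{C^\alpha}^{1-\beta}$ (and $\|\na\Lambda^{-1}\theta\|_{L^\infty}\lesssim\|\na\theta\|_{L^2}^{1/2}\|\theta\|_{\dot H^2}^{1/2}$, using $\theta\in L^2_t(\dot H^2)$ from Proposition \ref{Prop41}), see \eqref{nau}; the decaying $L^6$ and $\dot H^1$ factors are what make $\|\na u\|_{L^\infty}^2$ time-integrable, and this bound is then run \emph{coupled} with the $p=6$ estimate \eqref{lpa}, after which $a\in L^2_t(W^{1,6})$ feeds back to give $\na u\in L^2_t(L^\infty)$. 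Your linear ordering (first $\na u$ in $L^2_tL^\infty$ from a purely uniform H\"older bound, then $a$) cannot close.

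Second, your mechanism for the coercivity constant $\f{1}{4(2\mu+\lam)}$ in \eqref{L2naa} fails. You propose to treat the correction $\f{1}{2\mu+\lam}\int aT|\na a|^2\,dx$ (equivalently $\int(\rho T-1)|\na a|^2\,dx$) by its size, ``controlled by $\|a\|_{L^\infty}$ and $\|T\|_{L^\infty}\leq M$,'' and describe the remaining issue as checking that $\|aT\|_{L^\infty}\leq M^2$ does not swallow the damping. With only the upper bound $M$ (which is arbitrary and may be large) this check cannot succeed, and you cannot invoke lower bounds of $\rho$ or $T$ here, since those are proved only afterwards in Corollary \ref{prop:lower bound} using \eqref{eq:blow} --- that would be circular. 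The paper's point is a sign cancellation, not a size estimate: after writing \eqref{eq52}, one substitutes $\div u=\f{1}{2\mu+\lam}(G+\rho T-1)$ \emph{also} in the transported term $\f1p\int\div u\,|\na a|^p\,dx$, so that the total coefficient of $|\na a|^p$ on the dissipative side becomes $\f{1}{(2\mu+\lam)p}\bigl[(p-1)\rho T+1\bigr]\geq\f{1}{(2\mu+\lam)p}$, using only $\rho T\geq0$. It is the $-1$ coming from the reference pressure, combined with nonnegativity of $\rho T$, that produces the clean constant ($\f{1}{2(2\mu+\lam)}$ for $p=2$, then $\f{1}{4(2\mu+\lam)}$ after the Young absorption); no control of $\|aT\|_{L^\infty}$ is needed or possible. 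Relatedly, the cross term $\int\na u\,\na a\cdot\na a\,dx$ cannot simply be ``absorbed by Young'' into the damping, since $\|\na u\|_{L^\infty}$ is not small; one must first close a Gronwall argument to get $\|\na a\|_{L^2}\leq C$ uniformly and only then re-read \eqref{lpa} with $p=2$, bounding $\|\na u\|_{L^\infty}^2\|\na a\|_{L^2}^2\lesssim\|\na u\|_{L^\infty}^2$ and converting it, via \eqref{nau} and the elliptic estimates for \eqref{eq:BD}, into the right-hand side of \eqref{L2naa}.
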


\begin{proof}
First, we recall that an homogeneous Littlewood-Paley decomposition
$(\ddj)_{j\in\Z}$ is a dyadic decomposition in the Fourier space for $\R^d.$
One may for instance set $\ddj:=\varphi(2^{-j}D)$ with $\varphi(\xi):=\chi(\xi/2)-\chi(\xi),$
and $\chi$ a non-increasing nonnegative smooth function supported in $B(0,4/3),$ and with
value $1$ on $B(0,3/4)$  (see \cite{Bah}, Chap. 2 for more details). Then we have the interpolation inequality
\beno
\| \na \Lambda^{-1}   a\|_{L^\infty}&\leq & 2^{\f N2} \|a\|_{L^6}+\sum_{j\geq N}2^{-j \alpha  } (2^{j \alpha  }\|\ddj  a\|_{L^\infty})\le 2^{\f N2} \|a\|_{L^6}+ 2^{-N\alpha  }  \| a\|_{C^{\alpha}}.
\eeno
Choosing $2^{N(\f12+\alpha )}=\f{ \| a\|_{C^{\alpha}} }{\|a\|_{L^6}}$, we get that
$\| \na \Lambda^{-1}  a\|_{L^\infty}\leq C\|a\|_{L^6}^\beta \|a\|_{C^{\alpha}}^{1-\beta}$
with $\beta=1-\f{1}{1+2\alpha}\in (0,1).$ From Proposition \ref{energyprop}, we have $\theta\in L^2((0,\infty);\dot{H}^2)\subset L^2(C^{\f12}).$ Then similar to $a,$ we can get that $\| \na \Lambda^{-1}  \theta\|_{L^\infty}\leq C\|\theta\|_{L^6}^{\f12} \|\theta\|_{C^{\f12}}^{\f12}\leq C\|\na\theta\|_{L^2}^{\f12} \|\theta\|_{\dot{H}^2}^{\f12}.$ Therefore, we obtain that
\ben\label{nau}
\begin{split}
\|\nabla u\|_{L^\infty}&\leq C(\|\na\Lambda^{-1}\curl u\|_{L^\infty}+\| \na\Lambda^{-1}G\|_{L^\infty}+\| \na \Lambda^{-1} (P-1)\|_{L^\infty})\\
&\leq C(\|\curl u\|_{W^{1,6}}+\| G\|_{W^{1,6}}+\|a\|_{L^6}^\beta \|a\|_{C^{\alpha}}^{1-\beta}+\|\na\theta\|_{L^2}^{\f12} \|\theta\|_{\dot{H}^2}^{\f12}).
\end{split}
 \een

On the other hand, recall that $\div u=\f1{2\mu+\lam}(G+\rho T-1)=\f1{2\mu+\lam}(G+\theta+aT),$ it is not difficult to derive that
\ben\label{eq52}
&&\partial_t \na a+(u\cdot\nabla)\na a+\nabla u\nabla a+ \div u \nabla a  =-\rho \na\div u=-\f1{2\mu+\lam}\rho\bigl(\na(G+\theta)+T\na a+a\na\theta\bigr).
\een
 Multiplying  \eqref{eq52} by $ |\nabla a|^{p-2} \nabla a $
and integrating the resulting equation on $\R^3$, we can derive that for $p\ge 2$,
\beno
&&\f1p\f{d}{dt} \|\nabla a\|_{L^p}^p  +\f{1}{2\mu+\lam}\int \rho T |\nabla a|^p\,dx \\
&\leq&  \f1p\int \div u |\na a|^p\,dx+C\|\na u\|_{L^\infty}\|\nabla a\|_{L^p}^p+C (\|\na G\|_{L^p} +\|\na\theta\|_{L^p}) \|\na a\|_{L^p}^{p-1}\\
&\leq& \f1{(2\mu+\lam)p}\int (\rho T-1)|\na a|^p\,dx +C(\|G\|_{L^\infty}+\|\na u\|_{L^\infty})\|\nabla a\|_{L^p}^p+C (\|\na G\|_{L^p} +\|\na\theta\|_{L^p}) \|\na a\|_{L^p}^{p-1},
\eeno
which means that
\beno
&&\f1p\f{d}{dt} \|\nabla a\|_{L^p}^p  +\f{1}{(2\mu+\lam)p}\int[(p-1) \rho T+1] |\nabla a|^p\,dx \\
&\leq &C(\|G\|_{L^\infty}+\|\na u\|_{L^\infty})\|\nabla a\|_{L^p}^p+C (\|\na G\|_{L^p} +\|\na\theta\|_{L^p}) \|\na a\|_{L^p}^{p-1}.
\eeno
Noting that $p\geq 2$ and the non-negativity of density and temperature, dividing by $\|\na a\|_{L^p}^{p-2},$ we can obtain that
\beno
\begin{split}
 \f12\f{d}{dt} \|\nabla a\|_{L^p}^2  +\f{1}{(2\mu+\lam)p}\| \na a  \|_{L^p}^2 &\leq C(\|G\|_{L^\infty}+\|\na u\|_{L^\infty})\|\nabla  a\|_{L^p}^2+C (\|\na G\|_{L^p} +\|\na\theta\|_{L^p}) \|\na a\|_{L^p}\\
 &\leq\eta\|\na a\|_{L^p}^2+C\left((\|G\|_{W^{1,6}}^2+\|\na u\|_{L^\infty}^2)\|\na a\|_{L^p}^2+\|\na G\|_{L^p}^2+\|\na \theta\|_{L^p}^2\right),
\end{split}
\eeno
with $\eta$ small enough, which implies that
\ben\label{lpa}
\begin{split}
 \f{d}{dt} \|\nabla a\|_{L^p}^2  +\f{1}{(2\mu+\lam)p}\| \nabla a\|_{L^p}^2 \leq C\left((\|G\|_{W^{1,6}}^2+\|\na u\|_{L^\infty}^2)\|\na a\|_{L^p}^2+\|\na G\|_{L^p}^2+\|\na \theta\|_{L^p}^2\right).
 \end{split}
\een
By taking $p=6$ in \eqref{lpa} and using \eqref{nau} and \eqref{energyest4}, we obtain from Gronwall's inequality that
 $\|a\|_{L^\infty((0,+\infty);W^{1,6})\cap L^2((0,+\infty);W^{1,6})}\leq C.$ From which together with \eqref{nau} implies that $\|\na u\|_{L^2((0,+\infty);L^\infty) }\leq C.$ It completes the proof to \eqref{eq:blow}.

Now we go back to \eqref{lpa} with $p=2$. By Gronwall's inequality, we obtain that  $\na a\in L^\infty((0,+\infty);L^2)\cap L^2((0,+\infty);L^2).$ Thanks to the uniform-in-time bounds obtained in the above, \eqref{lpa} with $p=2$ will yield that
\beno \f{d}{dt} \|\nabla a\|_{L^2}^2  +\f{1}{4(2\mu+\lam)}\| \nabla a \|_{L^2}^2 \leq C( \|\nabla G\|_{L^{2}}^2+\|\nabla \theta\|_{L^{2}}^2+\|\theta\|_{\dot{H}^2}^2+\|G\|^2_{W^{1,6}} +\|\curl u\|_{W^{1,6}}^2 )+C_\eta\|a\|_{L^6}^2,
\eeno
from which together with the elliptic estimate for \eqref{eq:BD}, we obtain \eqref{L2naa}.
\end{proof}

As a consequence, we can prove the propagation of the lower bounds of the density and the temperature.

\begin{col}\label{prop:lower bound}
Let $0<\alpha<1,$ $\mu>\frac12 \lam$ and $(\rho,u,T)$ be a global and smooth solution of \eqref{fcns} and satisfy \eqref{Ass1}--\eqref{Ass2}.  Initial data $(\rho_0,u_0,T_0)$ verify the admissible condition \eqref{admissible}.
Then, there exists two constants $\underline{\rho}=\underline{\rho}(c,M)>0$ and $\underline{T}=\underline{T}(c,M)>0$ such that for all $t\ge0$, $\rho(t,x)\ge \underline{\rho}$ and $T(t,x)\ge \underline{T}$.

\end{col}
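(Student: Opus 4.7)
The strategy is a Lagrangian analysis combined with the effective viscous flux structure. For the density lower bound, let $X(t,x)$ denote the flow of $u$, which is well-defined globally since $\nabla u \in L^2((0,\infty); L^\infty)$ by Proposition \ref{prop_density}. The continuity equation $\partial_t\rho + u\cdot\nabla\rho + \rho\,\div u = 0$ along the flow gives $\frac{d}{dt}\frac{1}{\rho(t, X(t,x))} = \frac{\div u}{\rho}(t, X(t,x))$. Substituting $\div u = \frac{1}{2\mu+\lambda}(G + \rho T - 1)$ and integrating the resulting linear ODE via the integrating factor $e^{t/(2\mu+\lambda)}$, I obtain the representation
\begin{equation*}
\frac{1}{\rho(t, X(t,x))} = 1 + \left(\frac{1}{\rho_0(x)} - 1\right) e^{-\frac{t}{2\mu+\lambda}} + \frac{1}{2\mu+\lambda}\int_0^t e^{-\frac{t-s}{2\mu+\lambda}} \left(\theta + \frac{G}{\rho}\right)(s, X(s,x))\,ds.
\end{equation*}
Setting $W(t) = \|1/\rho(t, \cdot)\|_{L^\infty}$ and taking the supremum in $x$ yields the convolution-Gronwall inequality
\begin{equation*}
W(t) \leq C(c,M) + \frac{1}{2\mu+\lambda}\int_0^t e^{-\frac{t-s}{2\mu+\lambda}} \bigl(\|\theta(s)\|_{L^\infty} + \|G(s)\|_{L^\infty}\, W(s)\bigr)\,ds.
\end{equation*}
From Proposition \ref{Prop41} the dissipation provides $G \in L^2((0,\infty); W^{1,6})$ and $\Delta\theta \in L^2((0,\infty); L^2)$ together with $\theta \in L^\infty((0,\infty); \dot H^1)$; since in $\R^3$ both $W^{1,6}$ and $\dot H^2$ embed into $L^\infty$, we conclude $\|G\|_{L^\infty}, \|\theta\|_{L^\infty} \in L^2((0,\infty))$.

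The main obstacle is closing this inequality uniformly in $t$, since $\int_0^t \|G\|_{L^\infty}\,ds$ only grows like $\sqrt{t}$, which a direct Gronwall would exponentiate into a bound blowing up with $t$. I would handle this by time-splitting. On a fixed finite window $[0, T_*]$ a direct Gronwall argument gives a bound on $W$ depending on $T_*$. For $t \geq T_*$, I split $\int_0^t = \int_0^{T_*} + \int_{T_*}^t$: the first piece carries a factor $e^{-(t-T_*)/(2\mu+\lambda)}$ and is therefore eventually negligible, while the second is estimated by Cauchy--Schwarz and the $L^2$-time decay of the exponential kernel, producing a bound of the form $C(2\mu+\lambda)^{-1/2}\bigl(\|G\|_{L^2((T_*,\infty); L^\infty)} + \|\theta\|_{L^2((T_*,\infty); L^\infty)}\bigr) \sup_{s\in[T_*,t]} W(s)$. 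Choosing $T_*$ large enough so that this coefficient is less than $\tfrac12$ allows me to absorb the $W$-term on the right, yielding a $t$-uniform bound $W \leq C(c,M)$, equivalently $\rho \geq \underline{\rho}(c,M) > 0$.

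For the temperature lower bound I would combine the minimum principle with the same effective viscous flux mechanism. At a point $x^*(t)$ where $T$ attains its infimum, $\nabla T(t, x^*) = 0$ and $\Delta T(t, x^*) \geq 0$; together with the pointwise non-negativity of the quadratic form $\frac{\mu}{2}|\nabla u + (\nabla u)'|^2 + \lambda(\div u)^2$ ensured by $\mu > \tfrac12 \lambda$, evaluating the energy equation $\eqref{fcns}_3$ at $x^*$ yields $T_{\min}'(t) \geq -T_{\min}(t)\,\div u(t, x^*(t))$. Rewriting $\div u(t, x^*) = \frac{1}{2\mu+\lambda}(G(t, x^*) + \rho(t, x^*) T_{\min}(t) - 1)$ and setting $Y(t) = 1/T_{\min}(t)$, the same integrating factor $e^{t/(2\mu+\lambda)}$ produces a convolution inequality for $Y$ of exactly the same form as for $W$, to which the same time-splitting argument applies and gives $T(t, x) \geq \underline{T}(c, M) > 0$ uniformly in $t$.
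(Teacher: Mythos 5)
Your proof is correct in substance, but it takes a genuinely different route from the paper. The paper argues in two stages for each quantity: for the density it combines the finite-time bound $\rho(t,x)\ge c\,e^{-Ct^{1/2}}$ (from $\|\div u\|_{L^2_t(L^\infty)}\le C$ in Proposition \ref{prop_density}, which needs \eqref{Ass2}) with the large-time statement $\|a(t)\|_{L^\infty}\to 0$, obtained by interpolating $\|a(t)\|_{L^6}\to 0$ (a consequence of \eqref{est:a}) against the uniform $C^\alpha$ bound \eqref{Ass2}; for the temperature it first gets an $e^{Nt}$-growing bound via $L^p$ estimates on $(1/T-1)_{+}$, then shows $\|\theta\|_{L^6}\to 0$, and then propagates extra bounds on $\theta_t$ and $\na^2\theta$ in $L^\infty_t(L^2)$ to obtain H\"older continuity of $T$ and hence $\|\theta\|_{L^\infty}\to 0$. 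You instead exploit the effective viscous flux throughout: the Duhamel representation for $1/\rho$ along particle paths with the damping $e^{-t/(2\mu+\lambda)}$, the integrability $\|G\|_{L^\infty},\|\theta\|_{L^\infty}\in L^2(0,\infty)$ furnished by Proposition \ref{Prop41}, a time-splitting absorption to close the convolution--Gronwall inequality uniformly in time, and the analogous ODE at the spatial minimum of $T$. Your route is more direct and quantitative: it yields the uniform lower bounds in one stroke, avoids the paper's additional propagation of $\theta_t,\na^2\theta$ estimates, and essentially bypasses the H\"older assumption \eqref{Ass2} (smoothness alone justifies the flow, so only \eqref{Ass1} and Proposition \ref{Prop41} are really used). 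The paper's route, in exchange, produces the extra qualitative information $\|a\|_{L^\infty},\|\theta\|_{L^\infty}\to 0$, i.e.\ uniform convergence to equilibrium, which your argument does not give.

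Three small repairs are needed. First, $\dot H^2(\R^3)$ alone does not embed into $L^\infty$; the correct justification is the interpolation $\|\theta\|_{L^\infty}\lesssim \|\na\theta\|_{L^2}^{1/2}\|\Delta\theta\|_{L^2}^{1/2}$, which together with Cauchy--Schwarz in time and Proposition \ref{Prop41} gives $\|\theta\|_{L^\infty}\in L^2_t$ as you need. Second, the pointwise nonnegativity of $\f{\mu}2|\na u+(\na u)'|^2+\lam(\div u)^2$ comes from the standing assumption $2\mu+3\lam\ge 0$ (via $|S|^2\ge \f13(\mathrm{tr}\,S)^2$ for symmetric $S$), not from $\mu>\f12\lam$. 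Third, the minimum-point argument requires the usual care: the infimum of $T(t,\cdot)$ is attained whenever it lies below $1$ (since $T\to 1$ at infinity), and $t\mapsto T_{\min}(t)$ need not be differentiable, so one should work with Dini derivatives or, equivalently, compare $1/T$ with the ODE supersolution through the parabolic maximum principle for $\rho(\pa_t+u\cdot\na)-\Delta$. These are standard and do not affect the architecture of your proof.
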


\begin{proof}
Using the equation of density and the estimate \eqref{eq:blow}, we have
\beno
\rho(t,x)\geq \rho_0(x)\exp^{-\int_{0}^t \|\div u\|_{L^\infty}\,d\tau}\ge ce^{-Ct^{\f12}}.
\eeno
 On the other hand, thanks to \eqref{est:a}, we  derive that $\lim\limits_{t\rightarrow \infty} \|a(t)\|_{L^6}=0,$ from which together with upper bounds for $\rho$ in $C^\alpha$,  we derive that
$\lim\limits_{t\rightarrow \infty} \|a(t)\|_{L^\infty}=0$.
These two facts imply that there exists a constant $ \underline{\rho}=\underline{\rho}(c,M_1)>0$ such that for all $t\ge0,$ $\rho(t,x)\ge \underline{\rho}.$

\medskip

Next we  prove the lower bound of the temperature. First, by maximum principle of heat equation, it is not hard to prove that the temperature is also positive. Next, let $f=\f{1}{T}-1$ which satisfies that
\beno
\rho(\pa_t +u\cdot \na)f-\Delta f &=& - (f+1)^2 \Big((\f{\mu}2|\na u+(\na u)'|^2+\lambda(\div u)^2)+T^{-1}|\na T|^2\Big)+ \rho (f+1) \div u \\
&\leq & \rho (f+1) \div u.
\eeno
Define $f_{+}=\max\{f,0\}$. Multiplying $|f_{+}|^{p-2}f_{+}$ on the both sides of the above inequality to get
\beno
\f1p\f{d}{dt} \|\rho^{\f1p}f_{+}\|^p_{L^p}\leq   ( \| \rho   \div u\|_{L^\infty}\|f_{+}\|_{L^p}+\| \rho   \div u\|_{L^p})\|f_{+}\|^{p-1}_{L^p}.
\eeno
Recalling that   the density has the positive lower bound, we get there exists a constant $C$ independent of $p$ such that
\beno
 \f{d}{dt}  \|\rho^{\f1p}f_{+}\|_{L^p}\leq  C ( \| \rho   \div u\|_{L^\infty}\|f_{+}\|_{L^p}+\| \rho   \div u\|_{L^p}),
\eeno
which implies that for all $t>0$, it holds
\beno
\|f_{+}(t)\|_{L^p}\leq (1+\|f_{+}(0)\|_{L^p})e^{Nt},
\eeno
where $N= \| \rho\|_{L^\infty} ( \|\div u\|_{L^2}+\|   \div u\|_{L^\infty} )$. Let $p\to \infty$, we get that
\beno
\|f_{+}(t)\|_{L^\infty}\leq (1+\|f(0)\|_{L^2\cap L^\infty} )e^{Nt},
\eeno
This shows that in any finite time  the temperature has positive lower bound.

Secondly we will prove the  $\|\theta\|_{L^6}$ will converge to zero when   $t$ goes to infinity.
Multiplying $|\theta|^4\theta$ on the both sides of \eqref{eqoftheta} and integrating by part, we  obtain that
\beno
\f{d}{dt}  \|\rho^\f16 \theta\|^6_{L^6}+\|\theta^2 \na \theta\|^2_{L^2}\leq \|\na u\|^2_{L^2} +\|\na \theta\|^2_{L^2}+\|  \na a\|^2_{L^2}.
\eeno
Because that the right hand of the above inequality is integrable with respect to time, we conclude that
$\|\rho^\f16 \theta\|^6_{L^6}$ is convergent when $t$ goes to infinity. Recalling that $\theta\in L^2([0,\infty];L^6)$, we get the desired result.

 To conclude our result, by the   argument used for the density, it seems that we only need to get the uniform-in-time bounds for the high order estimate of $\theta$.
Taking $\pa_t$ on the both sides of $\eqref{fcns}_3$, we get
\beno
\rho\pa_t \theta_t-\Delta \theta_t= -a_t\theta_t-(\rho u\cdot \na u+\rho T\div u)_t+ (\f{\mu}2|\na u+(\na u)'|^2+\lambda(\div u)^2)_t.
\eeno
Then by energy estimates, we have
\beno
\f{d}{dt} \|\rho^\f12 \theta_t\|^2_{L^2}+\|\na\theta_t\|^2_{L^2}\leq C(\|\theta_t\|^2_{L^2}+\|\na a\|^2_{L^2\cap L^6}+\|\na u\|^2_{H^1}+\|\na u\|^2_{L^\infty}+\|  u_t\|^2_{H^1})(\|  \theta_t\|^2_{L^2}+1).
\eeno
Applying the Gronwall's inequality and the results obtained in the previous steps, it is easy to get that
\beno
\|\theta_t\|_{L^\infty(0,\infty; L^2)}<\infty.
\eeno
 Going back to the $\eqref{fcns}_3$, we obtain that
$\|\na^2\theta\|_{L^\infty(0,\infty; L^2)}<\infty$
which implies that Holder continuous of the temperature. By copying the  argument used for the density, the above facts will imply that there exists a constant $ \underline{T}=\underline{T}(c,M)>0$ such that for all $t\ge0,$ $T(t,x)\ge \underline{T}.$

\end{proof}

 \subsubsection{Deriving the dissipation inequality} We want to prove
\begin{prop}\label{energy_prop_main}
Let $0<\alpha<1,$ $\mu>\frac12 \lam,$ and $(\rho,u,T)$ be a global and smooth solution of \eqref{fcns} and satisfy \eqref{Ass1}--\eqref{Ass2}. Initial data $(\rho_0,u_0,T_0)$ verify the admissible condition \eqref{admissible}. Then there exist
constants $A_i(i=1,\cdots,7)$ are positive constants depending on $\mu,$ $\lam$ and $M$ such that
\beno
\begin{split} X(t)&\eqdefa   A_1\|\rho^{\f14} u\|_{L^4}^4+A_2\Big( \mu\|\na u\|_{L^2}^2+(\lam+\mu)\|\div u\|_{L^2}^2-\int (P-1)\div u\,dx- \f1{2\mu+\lam}\int  \bigl(H(a,\theta)-F(a)\bigr)\,dx\Big)\\
&+A_3\|  a \|_{L^6}^2+A_4\big(\int (\rho\ln\rho-\rho+1)\,dx+\|\sqrt{\rho} u\|_{L^2}^2+\int\rho(T-\ln T-1)\,dx\big)\\
&+A_5(\|\sqrt{\rho}\dot{u}\|_{L^2}^2+\|\na\theta\|_{L^2}^2)+A_6\|\na a\|_{L^2}^2\\
&\sim \|    u\|_{H^1}^2+\| a\|_{H^1}^2 +\| \dot u\|_{L^2}^2+\|\theta\|_{H^1}^2, \end{split}\eeno
which verifies
 \begin{equation}\label{energyest1_main}
\begin{split}
\frac{d}{dt} X(t)
+A_7\Big( \|\na^2 u\|_{L^2}^2+\|\na   u\|_{L^2}^2+\| \na a\|_{L^2}^2 +\|\na\dot u\|_{L^2}^2+\|\na^2\theta\|_{L^2}^2\Big)\leq 0.
\end{split}
\end{equation}
\end{prop}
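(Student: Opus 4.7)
The functional $X(t)$ in the statement augments the functional of Proposition~\ref{Prop41} by the single term $A_6\|\na a\|_{L^2}^2$, and the target dissipation in \eqref{energyest1_main} augments the dissipation of \eqref{energyest4} by $\|\na a\|_{L^2}^2$. The plan is therefore to add to \eqref{energyest4} a small multiple $A_6$ of the density estimate \eqref{L2naa}, then absorb the newly produced right-hand side into the existing dissipation. The RHS of \eqref{L2naa} contributes $\|\na\dot u\|_{L^2}^2,\ \|\dot u\|_{L^2}^2,\ \|\na\theta\|_{L^2}^2,\ \|\theta\|_{\dot H^2}^2,\ \|a\|_{L^6}^2$; four of these appear verbatim in the dissipation of \eqref{energyest4} (using $\|\theta\|_{\dot H^2}^2\sim\|\Delta\theta\|_{L^2}^2$), so for $A_6$ small they are absorbed. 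The only genuinely new term is $\|\dot u\|_{L^2}^2$, which I would handle via Corollary~\ref{prop:lower bound}: since $\rho\ge\underline{\rho}>0$ and $\dot u=u_t+u\cdot\na u$, one has $\|\dot u\|_{L^2}^2\le 2\|u_t\|_{L^2}^2+2\|u\cdot\na u\|_{L^2}^2\le C(\|\sqrt{\rho}u_t\|_{L^2}^2+\||u||\na u|\|_{L^2}^2)$, and both terms on the right are part of the dissipation of \eqref{energyest4}. After choosing $A_6$ small enough, we obtain $\tfrac{d}{dt}X(t)+A_7\bigl(\text{dissipation of \eqref{energyest4}}+\|\na a\|_{L^2}^2\bigr)\le 0$ for some $A_7>0$.

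To recast the dissipation in the compact form of \eqref{energyest1_main}, I would use the elliptic decomposition $\|\na^2 u\|_{L^2}\le C(\|\na G\|_{L^2}+\|\na\curl u\|_{L^2}+\|\na(P-1)\|_{L^2})$ together with $\|\na(P-1)\|_{L^2}\le C(\|\na a\|_{L^2}+\|\na\theta\|_{L^2})$ (from the $L^\infty$ bounds in \eqref{Ass1}), and the standard $\|\na^2\theta\|_{L^2}\sim\|\Delta\theta\|_{L^2}$. Each of $\|\na G\|_{L^2}^2,\|\na\curl u\|_{L^2}^2,\|\na a\|_{L^2}^2,\|\na\theta\|_{L^2}^2,\|\Delta\theta\|_{L^2}^2$ sits in the combined dissipation, which therefore dominates $\|\na^2 u\|_{L^2}^2+\|\na u\|_{L^2}^2+\|\na a\|_{L^2}^2+\|\na\dot u\|_{L^2}^2+\|\na^2\theta\|_{L^2}^2$, yielding \eqref{energyest1_main}.

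For the equivalence $X(t)\sim\|u\|_{H^1}^2+\|a\|_{H^1}^2+\|\dot u\|_{L^2}^2+\|\theta\|_{H^1}^2$, I would appeal to Remarks~\ref{rmk1}--\ref{rmk4}: the positive quantities $\int(\rho\ln\rho-\rho+1)\,dx$ and $\int\rho(T-\ln T-1)\,dx$ are comparable to $\|a\|_{L^2}^2$ and $\|\theta\|_{L^2}^2$ respectively, while $|F(a)|+|H(a,\theta)|\lesssim(\rho\ln\rho-\rho+1)+\rho(T-\ln T-1)$. The indefinite cross term $\int(P-1)\div u\,dx$ is absorbed into $A_2\mu\|\na u\|_{L^2}^2$ and the Remark~\ref{rmk1} quantities via Young's inequality, using $\|P-1\|_{L^2}\le C(\|a\|_{L^2}+\|\theta\|_{L^2})$ from \eqref{Ass1}; the same mechanism controls the $\int\theta(2\mu|Du|^2+\lambda(\div u)^2)\,dx$ contribution hidden inside the $A_5$-block (via $\|\theta\|_{L^\infty}\le M_1$). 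Finally, $\|\sqrt{\rho}\dot u\|_{L^2}^2$ is comparable to $\|\dot u\|_{L^2}^2$ thanks to the two-sided bound on $\rho$, and $A_6\|\na a\|_{L^2}^2$ supplies the last piece of the $H^1$ norm of $a$. Choosing the constants in the order $A_6\ll\min(A_2,A_3,A_5)\ll A_4$ secures the equivalence.

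The main obstacle is precisely the bookkeeping of these seven constants: $A_6$ must be small enough to absorb the RHS of \eqref{L2naa} (in particular the $\|\dot u\|_{L^2}^2$ term, which consumes a portion of the $\|\sqrt{\rho}u_t\|_{L^2}^2+\||u||\na u|\|_{L^2}^2$ dissipation via Corollary~\ref{prop:lower bound}), while simultaneously $A_4$ has to be large enough for the Remark~\ref{rmk1} quantities to dominate the indefinite cross terms inside $X(t)$, ensuring both positivity and $H^1$-equivalence. Once this ordering of constants is settled, the rest of the argument is a direct linear combination of the estimates already established in Propositions~\ref{Prop41} and~\ref{prop_density}.
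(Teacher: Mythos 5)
Your proposal is correct and follows essentially the same route as the paper: add (a small multiple of) the density estimate \eqref{L2naa} to \eqref{energyest4}, absorb its right-hand side into the existing dissipation (the $\|\dot u\|_{L^2}^2$ term via the lower bound of $\rho$ from Corollary \ref{prop:lower bound}), then use the lower and upper bounds of $\rho$ and $T$, the energy-identity remarks, and the elliptic/momentum-equation decomposition of $\na^2 u$ to obtain both the equivalence $X(t)\sim \|u\|_{H^1}^2+\|a\|_{H^1}^2+\|\dot u\|_{L^2}^2+\|\theta\|_{H^1}^2$ and \eqref{energyest1_main}. Your bookkeeping of the constants $A_i$ is, if anything, more explicit than the paper's.
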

\begin{proof} From \eqref{L2naa} and \eqref{energyest4}, we get that there exist constants $A_i(i=1,\cdots,7)$ such that
\beno
\begin{split}
&\f{d}{dt}\Bigl[A_1\|\rho^{\f14} u\|_{L^4}^4+A_2\Big( \mu\|\na u\|_{L^2}^2+(\lam+\mu)\|\div u\|_{L^2}^2-\int (P-1)\div u\,dx- \f1{2\mu+\lam}\int  \bigl(H(a,\theta)-F(a)\bigr)\,dx\Big)\\
&\quad+A_3\|  a \|_{L^6}^2+A_4\Big(\int (\rho\ln\rho-\rho+1)\,dx+\|\sqrt{\rho} u\|_{L^2}^2+\int\rho(T-\ln T-1)\,dx\Big)\\
&\quad+A_5(\|\sqrt{\rho}\dot{u}\|_{L^2}^2+\|\na\theta\|_{L^2}^2)+A_6\|\na a\|_{L^2}^2\Bigr]\\
&\qquad+A_7\Big( \|\na^2 u\|_{L^2}^2+\|\na   u\|_{L^2}^2+\| \na a\|_{L^2}^2 +\|\na\dot u\|_{L^2}^2+\|\na^2\theta\|_{L^2}^2\Big)\leq 0.
\end{split}
\eeno	

  Thanks to the energy identity \eqref{energyest2}, the constant $A_4$ can be chosen large enough to ensure that $X(t)\ge0$. Thanks to  the positive lower and upper bounds  of density and temperature, one has $\int (\rho\ln\rho-\rho+1)\,dx\sim \|\rho-1\|_{L^2}^2$
  and $\int\rho(T-\ln T-1)\,dx\sim \|T-1\|_{L^2}^2$, from which together with $\rho \dot u+\na P=\mu\Delta u+(\lam+\mu)\na\div u$, we deduce that   $X(t)\sim \|u\|_{H^1}^2+\| a\|_{H^1}^2 +\| \dot u\|_{L^2}^2+\|\theta\|_{H^1}^2$.  It ends the proof of the proposition.
\end{proof}

  \subsection{Time-frequency splitting method and the longtime behavior of the solution}
The aim of this subsection is to show the longtime behavior of the solution with quantitative estimates.

First, note that $\rho_t=a_t$ and $T_t=\theta_t,$ we can make a small modification on \eqref{cns} to get
\begin{equation}\label{cns1}
 \left\{\begin{array}{l}
\partial_t a+\div(\rho  u )=0,\\[0.5ex]\displaystyle
\partial_t(\rho  u)+\div (\rho  u \otimes u )
+\nabla (P-1)=\div \bS (u),\\
\pa_t(\rho E_1)+\div(\rho E_1u)+\div(Pu)=\div\bigl(\bS(u)u\bigr)+\Delta \theta,
\end{array}
\right.
\end{equation}
where $\rho E_1=\rho \theta+\f12\rho|u|^2.$ We begin with a crucial lemma on the estimate of the low frequency part of the solution.

\begin{lem}\label{conlf}  Let $a_0,u_0,\theta_0\in L^1(\R^3)\cap L^2(\R^3),$ and $\rho_0\in L^\infty(\R^3).$ Then if $\rho(t,x)\le M$, we have
\begin{equation}\label{conlfsol}
\begin{split}
 &\int_{S(t)} \big( |\hat{a}(\xi,t)|^2+|\widehat{\rho u}(\xi,t)|^2+|\widehat{\rho E_1}(\xi,t)|^2 \big) \,d\xi \leq
  C\big(\|a_0\|_{L^1}^2+ \|\rho_0 u_0\|_{L^1}^2 +\|\rho_0\theta_0\|_{L^1}^2+\|\rho_0|u_0|^2\|_{L^1}^2 \big)(1+t)^{-\f32}\\
	&\qquad+ C(M)(1+t)^{-\f32}\int_0^t \big( \|u\|_{L^2}^4+ \|a\|_{L^2}^4+\|\theta\|_{L^2}^4+\|\na u\|_{L^2}^4+\|u\|_{L^2}^3\|\na u\|_{L^2}^3\big)\,ds,  \end{split}
\end{equation}
where $S(t) =\{\xi\in \R^3: |\xi|\leq C(1+t)^{-\f12}  \}.$
\end{lem}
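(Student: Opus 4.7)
The plan is to work on the Fourier side using the conservation form \eqref{cns1}. Writing each equation in integral form,
\begin{align*}
\hat a(t,\xi) &= \hat a_0(\xi) -i\xi\cdot\int_0^t\widehat{\rho u}(s,\xi)\,ds,\\
\widehat{\rho u}(t,\xi) &= \widehat{\rho_0 u_0}(\xi) -i\xi\cdot\int_0^t\widehat{\rho u\otimes u}\,ds -i\xi\int_0^t\widehat{P-1}\,ds +\int_0^t\widehat{\div\bS(u)}\,ds,\\
\widehat{\rho E_1}(t,\xi) &= \widehat{\rho_0 E_{1,0}}(\xi) -i\xi\cdot\int_0^t\widehat{\rho E_1 u}\,ds -i\xi\cdot\int_0^t\widehat{Pu}\,ds +\int_0^t\widehat{\div(\bS(u)u)}\,ds -|\xi|^2\int_0^t\hat\theta\,ds,
\end{align*}
each source carries either an $i\xi$ (divergence terms) or a $|\xi|^2$ (diffusion terms). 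Squaring these identities and using $|\xi|\le C(1+t)^{-1/2}$ on $S(t)$, I would apply Cauchy--Schwarz to turn $|\int_0^t(\cdot)ds|^2$ into $t\int_0^t|\cdot|^2 ds$ and exploit that $t|\xi|^{2k}\le C(1+t)^{1-k}$ to absorb powers of $t$. The volume bound $\mathrm{vol}(S(t))\le C(1+t)^{-3/2}$ then delivers the overall decay when one integrates over $S(t)$.

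For the initial-data contribution I use $|\hat f_0(\xi)|\le\|f_0\|_{L^1}$ and integrate over $S(t)$, yielding the first term on the right-hand side of \eqref{conlfsol}. For the genuinely nonlinear flux terms $\widehat{\rho u\otimes u}$, $\widehat{Pu}$, $\widehat{\rho E_1 u}$, $\widehat{\bS(u)u}$ and the nonlinear part of the pressure, I would exploit the $L^\infty$-bound on $\rho$ and estimate each integrand via $|\hat g|\le\|g\|_{L^1}$. This gives clean quadratic bounds like $\|\rho u\otimes u\|_{L^1}\lesssim M\|u\|_{L^2}^2$, $\|\rho E_1 u\|_{L^1}\lesssim M(\|\theta\|_{L^2}\|u\|_{L^2}+\|u\|_{L^2}^3)$, $\|\bS(u)u\|_{L^1}\lesssim \|\nabla u\|_{L^2}\|u\|_{L^2}$, which after squaring and time-integration match the quartic and $\|u\|_{L^2}^3\|\nabla u\|_{L^2}^3$ terms in \eqref{conlfsol}. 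The pressure term needs the splitting $P-1=a+\theta+a\theta$ so that only the bilinear piece $a\theta$ is estimated in $L^1$ and the linear pieces are handled as in the next step.

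The subtle part is the linear-in-$u$ and linear-in-$\theta$ diffusion terms $\widehat{\div\bS(u)}$ and $|\xi|^2\hat\theta$: since neither $u$ nor $\theta$ need lie in $L^1$, one cannot estimate them by their $L^1$-norms. Here I would use the simple algebraic identities $u=\rho u-au$ and $\theta=\rho\theta-a\theta$ (valid because $\rho=1+a$) to decompose
\[\hat u=\widehat{\rho u}-\widehat{au},\qquad \hat\theta=\widehat{\rho\theta}-\widehat{a\theta}.\]
The strictly nonlinear remainders are in $L^1$ with $\|au\|_{L^1}\le\|a\|_{L^2}\|u\|_{L^2}$ and $\|a\theta\|_{L^1}\le\|a\|_{L^2}\|\theta\|_{L^2}$, and feed into the quartic bounds in \eqref{conlfsol}. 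The pieces involving $\widehat{\rho u}$ and $\widehat{\rho\theta}=\widehat{\rho E_1}-\tfrac12\widehat{\rho|u|^2}$ couple back to the left-hand side: after swapping the $ds$ and $d\xi$ integrals, I would use Plancherel in the spatial variable on $S(t)$ to bound $\int_{S(t)}|\widehat{\rho u}|^2d\xi\le \|\rho u\|_{L^2}^2\le M^2\|u\|_{L^2}^2$, and similarly for $\widehat{\rho\theta}$, which after the squared Cauchy--Schwarz produces additional factors of $\|u\|_{L^2}^2$, $\|\nabla u\|_{L^2}^2$ that combine with $\|a\|_{L^2}^2$, $\|\theta\|_{L^2}^2$ to land back inside the quartic template of the right-hand side.

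The main obstacle is precisely the diffusion terms discussed above. Naively they would produce an irreducible $|\xi|^2$ coupling that defeats the $L^1$-style argument; the decomposition $\hat u=\widehat{\rho u}-\widehat{au}$ is the one concession to the full system's structure that makes everything line up, and it is the only place where the coupling between the three equations and the upper bound on $\rho$ are both essential. Once this is in place, the remaining computation is a careful bookkeeping of Cauchy--Schwarz and the $|\xi|\le C(1+t)^{-1/2}$ budget; collecting every contribution produces exactly the five quartic integrands displayed in \eqref{conlfsol}, all multiplied by the common factor $(1+t)^{-3/2}$ from $\mathrm{vol}(S(t))$.
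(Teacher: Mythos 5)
Your overall frame (restrict to $S(t)$, use $|\hat g(\xi)|\le\|g\|_{L^1}$ for the quadratic fluxes, let the volume of $S(t)$ produce $(1+t)^{-\f32}$) coincides with the paper's treatment of the nonlinear terms, but the route through the time-integrated (Duhamel) form of \eqref{cns1} breaks down precisely at the linear first-order couplings, and your fix does not close. After integrating each equation in time, squaring and applying Cauchy--Schwarz, the continuity equation contributes $|\xi|^2t\int_0^t|\widehat{\rho u}|^2ds$, the linear part of the pressure gradient contributes $|\xi|^2t\int_0^t(|\hat a|^2+|\hat\theta|^2)\,ds$, and the term $i\xi\cdot\hat u$ in the energy equation contributes $|\xi|^2t\int_0^t|\hat u|^2ds$; on $S(t)$ the factor $|\xi|^2t$ is only $O(1)$, so after your Plancherel step these terms are bounded merely by $C\int_0^t\big(\|u\|_{L^2}^2+\|a\|_{L^2}^2+\|\theta\|_{L^2}^2\big)ds$ -- no decay prefactor, quadratic rather than quartic integrands. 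This is not dominated by the right-hand side of \eqref{conlfsol} (a priori it grows like $t$), and it cannot simply ``couple back to the left-hand side'': the left side is the quantity at the single time $t$, so absorbing $\int_0^t(\cdot)\,ds$ needs a Gronwall argument, and with an $O(1)$ coefficient (instead of $O((1+t)^{-1})$) Gronwall only yields a factor $e^{Ct}$. The same defect, in milder form, affects your treatment of $\widehat{\div\bS(u)}$: the $\widehat{\rho u}$ piece gives $(1+t)^{-1}\int_0^t\|u\|_{L^2}^2ds$, again quadratic and not of the displayed form -- your claim that it ``combines with $\|a\|_{L^2}^2$, $\|\theta\|_{L^2}^2$'' has no source for those extra factors.

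The paper avoids all of this by never integrating the three equations separately: it multiplies each Fourier-transformed equation by the conjugate of its own unknown ($\bar{\hat a}$, $\overline{\widehat{\rho u}}$, $\overline{\widehat{\rho E_1}}$) and sums, so that the antisymmetric linear couplings $i\xi\cdot\widehat{\rho u}\,\bar{\hat a}+i\xi\hat a\cdot\overline{\widehat{\rho u}}$ and, after writing $\widehat{\rho u}=\hat u+\widehat{au}$ and $\widehat{\rho E_1}=\hat\theta+\widehat{a\theta}+\widehat{\tfrac12\rho|u|^2}$, also $i\xi\hat\theta\cdot\overline{\hat u}+i\xi\cdot\hat u\,\bar{\hat\theta}$, are purely imaginary and vanish from the real energy identity, while $\widehat{\Delta u}\cdot\overline{\hat u}$ and $\widehat{\Delta\theta}\,\overline{\hat\theta}$ give dissipation with a favorable sign; only genuinely bilinear remainders ($\widehat{au}$, $\widehat{a\theta}$, $\widehat{\rho|u|^2}$, the fluxes in $B_1$--$B_5$) survive, and those are exactly the ones your $L^1$--$L^\infty$ Fourier bound handles. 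Without this cancellation (or, alternatively, a genuinely different argument via the linearized semigroup and its low-frequency decay), the linear terms defeat the estimate, so as written your proposal has a real gap at this point.
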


\begin{proof}  We take the Fourier transform of \eqref{cns1}, and then multiply $\bar{\hat{a}}$ to the first equation, multiply $\overline{\widehat{\rho u}}$ to the second equation, multiply $\overline{\widehat{\rho E_1}}$ to the third equation respectively to obtain that
	\beno
	\left\{
	\begin{aligned}
		& \f12\f{d}{dt} |\hat{a}|^2 + i \xi \cdot \widehat{\rho u} \bar{\hat{a}}=0,\\
		&\f12\f{d}{dt} |\widehat{\rho u}|^2+\big(\widehat{\div(\rho u\otimes u)}-\mu\widehat{\Delta u}-(\lam+\mu)\widehat{\na\div u} \big) \cdot \overline{\widehat{\rho u}}+ i  \xi(\widehat{a \theta}+\hat{a}+\hat{\theta} ) \cdot \overline{\widehat{\rho u}}=0,\\
		&\f12\f{d}{dt}|\widehat{\rho E_1}|^2+\big(\widehat{\div(\rho E_1 u)}+\widehat{\div((P-1)u)}+i\xi\cdot \hat{u} \big)  \overline{\widehat{\rho E_1}}=\big(i\xi\cdot\widehat{\bS(u)u}+\widehat{\Delta \theta}\big)\overline{\widehat{\rho E_1}},
	\end{aligned}
	\right.
	\eeno
	which implies that
	\beno
	\begin{split}
	\f12 \f{d}{dt} \big(|\hat{a}|^2+  |\widehat{\rho u}|^2+|\widehat{\rho E_1}|^2\big)&=\big[ -\widehat{\div(\rho u\otimes u)}+\mu\widehat{\Delta u}+(\lam+\mu)\widehat{\na\div u}+ i  \xi\widehat{a\theta} \big]\cdot\overline{\widehat{\rho u}}-i\xi\hat{\theta}\cdot\overline{\widehat{au}}\\
	&-  \big[\widehat{\div(\rho E_1 u)}+\widehat{\div((P-1)u)}-i\xi\cdot\widehat{\bS(u)u}- \widehat{\Delta \theta} \big]\overline{\widehat{\rho E_1}} -i\xi\cdot \hat{u} \big(\overline{\widehat{a\theta}}+ \overline{\widehat{\f12\rho|u|^2}}\big)\\
	& \overset{\text{def}} {=}F(\xi,t).
	\end{split}
	\eeno
	Integrating the above equation with time $t$, we get that
	\beno
	|\hat{a}(\xi,t)|^2+|\widehat{\rho u}(\xi,t)|^2+|\widehat{\rho E_1}(\xi,t)|^2=|\hat{a}(\xi,0)|^2+|\widehat{\rho u}(\xi,0)|^2 +|\widehat{\rho E_1}(\xi,0)|^2+2\int_0^t F(\xi, s)\,ds.
	\eeno
	Let $S(t)  \overset{\text{def}}{=}\{\xi: |\xi|\leq C(1+t)^{-\f12}  \},$ then we can split the phase space $\R^3$ into two time-dependent regions, $S(t)$ and $S(t)^c.$ Integrating the above equation over $S(t),$ and noting that $\widehat{\rho u}=\hat{u}+\widehat{au},$ $\widehat{\rho \theta}=\hat{\theta}+\widehat{a\theta},$ and
	\beno
	\widehat{\Delta u}\bar{\hat{u}}=-|\xi|^2 |\hat{u}|^2, \quad \widehat{\na\div u} \bar{\hat{u}}= -|\xi \cdot \hat{u}|^2, \quad \widehat{\Delta \theta}\bar{\hat{\theta}}=-|\xi|^2 |\hat{\theta}|^2,
	\eeno
	we can obtain that
	\begin{equation}\label{decayeq1}
	\begin{split}
	 &\int_{S(t)} \big(|\hat{a}(\xi,t)|^2+|\widehat{\rho u}(\xi,t)|^2+|\widehat{\rho E_1}(\xi,t)|^2 \big) \,d\xi +\int_0^t \int_{S(t)} \big( \mu |\xi|^2 |\hat{u}|^2 + (\lam+\mu) |\xi\cdot \hat{u}|^2+|\xi|^2 |\hat{\theta}|^2 \big) \,d\xi ds\\
	&=\int_{S(t)} \big( |\hat{a}(\xi,0)|^2+|\widehat{\rho u}(\xi,0)|^2+|\widehat{\rho E_1}(\xi,0)|^2 \big) \,d\xi \\
	&\qquad+ \int_0^t \int_{S(t)} \Big[ -\widehat{\div(\rho u\otimes u)} \cdot\overline{\widehat{\rho u}} +\Big(\mu\widehat{\Delta u}  +(\lam+\mu)\widehat{\na\div u}\Big) \cdot\overline{\widehat{a u}} +   i  \xi\big(\widehat{a\theta}\cdot\overline{\widehat{\rho u}}-\hat{\theta}\cdot\overline{\widehat{au}}\big)\\
	&\qquad -\Big( \widehat{\div(\rho E_1 u)}+\widehat{\div((P-1)u)}-i\xi\cdot\widehat{\bS(u)u}  \Big)\overline{\widehat{\rho E_1}} -\big(|\xi|^2\hat{\theta}+i\xi\cdot\hat{u}\big)\big(\overline{\widehat{a\theta}}+ \overline{\widehat{\f12\rho|u|^2}}\big)\Big]  \,d\xi ds \\
	&\eqdefa \int_{S(t)} \big( |\hat{a}(\xi,0)|^2+|\widehat{\rho u}(\xi,0)|^2+|\widehat{\rho E_1}(\xi,0)|^2 \big) \,d\xi + \sum_{i=1}^5 B_i.
	\end{split}
\end{equation}
	From Proposition \ref{energyprop}, we have that $a,$ $u,$ $\theta$ and $\na u$ all belong to $L^\infty((0,+\infty);L^2),$ which means $\widehat{\rho u\otimes u},$ $\widehat{\f12\rho|u|^2},$ $\widehat{a u},$ $\widehat{\rho\theta u},$ $\widehat{a\theta},$ $\widehat{(P-1)u}$ and $\widehat{\bS(u)u}$ belong to $L^\infty((0,+\infty);L^\infty).$ Thanks to these facts, we can give estimates to the terms $B_i(i=1,\cdots,5).$ We first have
	\ben\label{decayeq2}
	\begin{split}
	|B_1|&\leq \Big| \int_0^t \int_{S(t)}  \widehat{\div(\rho u\otimes u)}\cdot (\overline{\hat{u}}+\overline{\widehat{a u}})\,d\xi ds\Big|\\
	&\leq \eta \int_0^t \int_{S(t)}  \mu |\xi|^2 |\hat{u}|^2 \,d\xi ds  +C_{\eta} \int_0^t \int_{S(t)} \big| \widehat{\rho u\otimes u} \big|^2 \,d\xi ds +\int_0^t \int_{S(t)} |\xi| \big| \widehat{\rho u\otimes u} \big| |\widehat{a u}| \,d\xi ds\\
	 &\leq \eta \int_0^t \int_{S(t)}  \mu |\xi|^2 |\hat{u}|^2 \,d\xi ds +C_{\eta} \int_0^t \|\widehat{\rho u\otimes u}\|_{L^\infty}^2 \int_{S(t)}\,d\xi ds \\
	 &\qquad + C(1+t)^{-\f12}\int_0^t \|\widehat{\rho u\otimes u}\|_{L^\infty} \|\widehat{a u}\|_{L^\infty}\int_{S(t)}\,d\xi ds \\
	 &\leq \eta \int_0^t \int_{S(t)}  \mu |\xi|^2 |\hat{u}|^2 \,d\xi ds+C_{\eta}(1+t)^{-\f32}\int_0^t \|u\|_{L^2}^4 ds + C(1+t)^{-2}\int_0^t \|u\|_{L^2}^3 \|a\|_{L^2}\,ds.
	\end{split}
\een
	Similarly, one has
	\begin{equation}\label{decayeq3}
	\begin{split}
	|B_2|&\leq \eta \int_0^t \int_{S(t)}  \mu |\xi|^2 |\hat{u}|^2 \,d\xi ds +C_{\eta} \int_0^t \int_{S(t)} |\xi|^2 \big| \widehat{a u} \big|^2 \,d\xi ds \\
	&\leq \eta \int_0^t \int_{S(t)}  \mu |\xi|^2 |\hat{u}|^2 \,d\xi ds +C_{\eta}(1+t)^{-\f52}\int_0^t \|u\|_{L^2}^2 \|a\|_{L^2}^2\,ds,
	\end{split}
	\end{equation}
	and
	\begin{equation}\label{decayeq4}
	\begin{split}
	|B_3|&\leq \eta \int_0^t \int_{S(t)}  (\mu |\xi|^2 |\hat{u}|^2+ |\xi|^2 |\hat{\theta}|^2)\,d\xi ds +C_{\eta} \int_0^t \int_{S(t)} \big( \big| \widehat{a u} \big|^2+\big| \widehat{a \theta} \big|^2\big) \,d\xi ds  + \int_0^t \int_{S(t)}  |\xi| | \widehat{a\theta}|  | \widehat{au}|  \,d\xi ds \\
	&\leq \eta \int_0^t \int_{S(t)}  (\mu |\xi|^2 |\hat{u}|^2+ |\xi|^2 |\hat{\theta}|^2) \,d\xi ds +C_{\eta}(1+t)^{-\f32}\int_0^t (\|u\|_{L^2}^2+\|\theta\|_{L^2}^2) \|a\|_{L^2}^2\,ds\\
	&\quad +C(1+t)^{-2} \int_0^t   \|u\|_{L^2}\|\theta\|_{L^2}\|a\|_{L^2}^2  \, ds.
	\end{split}
	\end{equation}

For $B_4,$ recalling that $\rho E_1=\theta+a\theta+\f12\rho|u|^2$ and $\big\||u|^3\big\|_{L^1}\leq \|u\|_{L^2}^{\f32}\|\na u\|_{L^2}^{\f32},$ we can obtain that
\begin{equation}\label{decayeq5}
\begin{split}
|B_4|&\leq \eta \int_0^t \int_{S(t)}  |\xi|^2 |\hat{\theta}|^2\,d\xi ds +C_{\eta}\int_0^t \int_{S(t)} \big(\big| \widehat{\rho E_1 u} \big|^2 +\big| \widehat{(P-1) u} \big|^2+\big| \widehat{\bS(u)u} \big|^2\big)\,d\xi ds\\
&\quad +\int_0^t \int_{S(t)} |\xi| \big(\big| \widehat{\rho E_1 u} \big| +\big| \widehat{(P-1) u} \big|+\big| \widehat{\bS(u)u} \big|\big) \big( \big| \widehat{a\theta} \big| +\big| \widehat{\f12\rho|u|^2} \big|  \big)\,d\xi ds\\
&\leq \eta \int_0^t \int_{S(t)}  |\xi|^2 |\hat{\theta}|^2\,d\xi ds +C_{\eta}(1+t)^{-\f32}\int_0^t [\|u\|_{L^2}^2(\|\theta\|_{L^2}^2+\|a\|_{L^2}^2+\|\na u\|_{L^2}^2)+\|u\|_{L^2}^3\|\na u\|_{L^2}^3] \,ds\\
&\quad+C(1+t)^{-2}\int_0^t \Big[\|u\|_{L^2}(\|\theta\|_{L^2}+\|a\|_{L^2}+\|\na u\|_{L^2})+\|u\|_{L^2}^{\f32}\|\na u\|_{L^2}^{\f32} \Big] (\|a\|_{L^2}\|\theta\|_{L^2}+\|u\|_{L^2}^2)\,ds.
\end{split}
\end{equation}	
Similar to $B_2$ and $B_4,$ one has
\begin{equation}\label{decayeq6}
\begin{split}
|B_5|&\leq \eta \int_0^t \int_{S(t)}  (\mu |\xi|^2 |\hat{u}|^2+ |\xi|^2 |\hat{\theta}|^2)\,d\xi ds +C_{\eta}\int_0^t \int_{S(t)} |\xi|^2\big(\big| \widehat{a\theta} \big|^2 +\big| \widehat{\f12\rho|u|^2} \big|^2\big)\,d\xi ds\\
&\quad +C_{\eta}\int_0^t \int_{S(t)} \big(\big| \widehat{a\theta} \big|^2 +\big| \widehat{\f12\rho|u|^2} \big|^2\big)\,d\xi ds\\
&\leq \eta \int_0^t \int_{S(t)}  (\mu |\xi|^2 |\hat{u}|^2+ |\xi|^2 |\hat{\theta}|^2)\,d\xi ds +C_{\eta}(1+t)^{-\f52}\int_0^t (\|a\|_{L^2}^2\|\theta\|_{L^2}^2+\|u\|_{L^2}^4)\,ds\\
&\quad+C(1+t)^{-\f32}\int_0^t (\|a\|_{L^2}^2\|\theta\|_{L^2}^2+\|u\|_{L^2}^4)\,ds.
\end{split}
\end{equation}

	Note that $a_0,$ $\rho_0 u_0,$ $\rho_0 \theta_0$ and $\rho_0 |u_0|^2$ belong to $L^1(\R^3).$ Then we have
	\begin{equation}\label{decayeq7}
	\begin{split}
	&\int_{S(t)} \big( |\hat{a}(\xi,0)|^2+|\widehat{\rho u}(\xi,0)|^2+|\widehat{\rho E_1}(\xi,0)|^2 \big) \,d\xi\\
	&\leq C\big(\|a_0\|_{L^1}^2+ \|\rho_0 u_0\|_{L^1}^2 +\|\rho_0\theta_0\|_{L^1}^2+\big\|\rho_0|u_0|^2\big\|_{L^1}^2 \big)(1+t)^{-\f32}.
	\end{split}
	\end{equation}
	Plugging $\eqref{decayeq2}-\eqref{decayeq7}$ into \eqref{decayeq1}, and choosing $\eta$ small enough, we arrive at
	\begin{equation*}
	\begin{split}
	&\int_{S(t)} \big( |\hat{a}(\xi,t)|^2+|\widehat{\rho u}(\xi,t)|^2+|\widehat{\rho E_1}(\xi,t)|^2 \big) \,d\xi +C\int_0^t \int_{S(t)} \big( \mu |\xi|^2 |\hat{u}|^2 + (\lam+\mu) |\xi\cdot \hat{u}|^2+|\xi|^2 |\hat{\theta}|^2 \big) \,d\xi ds\\
	&\leq C\big(\|a_0\|_{L^1}^2+ \|\rho_0 u_0\|_{L^1}^2 +\|\rho_0\theta_0\|_{L^1}^2+\big\|\rho_0|u_0|^2\big\|_{L^1}^2 \big)(1+t)^{-\f32}\\
	&\quad+ C(1+t)^{-\f32}\int_0^t \big( \|u\|_{L^2}^4+ \|a\|_{L^2}^4+\|\theta\|_{L^2}^4+\|\na u\|_{L^2}^4+\|u\|_{L^2}^3\|\na u\|_{L^2}^3\big)\,ds.
	\end{split}
	\end{equation*}
It ends the proof to the lemma.
\end{proof}

\medskip

Now we are in a position to prove

\begin{prop}\label{decayprop}  Let $(\rho,u,T)$ be a global smooth solution of \eqref{fcns} and satisfy \eqref{Ass1}--\eqref{Ass2} with  $0<\alpha<1$ and $\mu>\frac12 \lam$. Assume that initial data $(\rho_0,u_0,T_0)$ verify the admissible condition \eqref{admissible}. Suppose that $(a_0,\theta_0) \in L^1(\R^3)\cap H^1(\R^3)$ and $u_0 \in L^1(\R^3)\cap H^2(\R^3),$ then we have
\begin{equation}\label{decayest2}
\|u(t)\|_{H^1} + \|a(t)\|_{H^1}+ \|\theta(t)\|_{H^1}\leq C(1+t)^{-\f34},
\end{equation}
where the constant $C$ depends only on $\mu,$ $\lam,$ $M,$ $\|a_0\|_{L^1\cap H^1},$ $\|u_0\|_{L^1\cap H^2}$ and $\|\theta_0\|_{L^1\cap H^1}.$
\end{prop}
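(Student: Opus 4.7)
The strategy is a time–frequency splitting argument of Schonbek type. Set $r(t)=C_0(1+t)^{-\f12}$, $S(t)=\{\xi\in\R^3:|\xi|\le r(t)\}$, and take as input the dissipation inequality of Proposition~\ref{energy_prop_main}, $\f{d}{dt}X(t)+A_7 D(t)\le 0$, where $X\sim\|u\|_{H^1}^2+\|a\|_{H^1}^2+\|\theta\|_{H^1}^2+\|\dot u\|_{L^2}^2$ and $D$ dominates $\|\na u\|_{H^1}^2+\|\na a\|_{L^2}^2+\|\na\theta\|_{H^1}^2+\|\na\dot u\|_{L^2}^2$. Observe first that the momentum equation $\rho\dot u=\mu\Delta u+(\mu+\lam)\na\div u-\na P$ combined with $\rho\ge\underline\rho>0$ (Corollary~\ref{prop:lower bound}) gives $\|\dot u\|_{L^2}^2\le C(\|\na^2 u\|_{L^2}^2+\|\na a\|_{L^2}^2+\|\na\theta\|_{L^2}^2)\le CD$, so $\dot u$ requires no separate low-frequency treatment.

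For any $f\in L^2(\R^3)$, Plancherel yields $\|f\|_{L^2}^2\le \|\hat f\,\mathbf{1}_{S(t)}\|_{L^2}^2+r(t)^{-2}\|\na f\|_{L^2}^2$. Applying this to $u,a,\theta$ and combining with the previous remark, I expect to derive
\beno
X(t)\le C(1+t)\,D(t)+L(t),
\eeno
with $L(t)\eqdefa\int_{S(t)}\bigl(|\hat u|^2+|\hat a|^2+|\hat\theta|^2\bigr)d\xi$; equivalently, $D(t)\ge \f{c}{1+t}\bigl(X(t)-L(t)\bigr)$. To bound $L(t)$, rewrite $\rho u=u+au$ and $\rho E_1=\theta+a\theta+\f12\rho|u|^2$, so that $\hat u,\hat\theta$ on $S(t)$ coincide with $\widehat{\rho u},\widehat{\rho E_1}$ modulo quadratic remainders controlled by $\int_{S(t)}|\widehat{au}|^2d\xi\le|S(t)|\,\|\widehat{au}\|_{L^\infty}^2\le C(1+t)^{-\f32}\|a\|_{L^2}^2\|u\|_{L^2}^2$ and its analogues. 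Lemma~\ref{conlf} then produces
\beno
L(t)\le \f{C}{(1+t)^{\f32}}\Bigl(1+\int_0^t\bigl(\|u\|_{L^2}^4+\|a\|_{L^2}^4+\|\theta\|_{L^2}^4+\|\na u\|_{L^2}^4+\|u\|_{L^2}^3\|\na u\|_{L^2}^3\bigr)ds\Bigr).
\eeno

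Substituting into $\f{d}{dt}X+A_7 D\le 0$ yields the ODE
\beno
\f{d}{dt}X(t)+\f{c_0}{1+t}X(t)\le\f{C}{(1+t)^{\f52}}\Bigl(1+\int_0^t X(s)^2\,ds\Bigr).
\eeno
The final step is a bootstrap on $\Phi(t)\eqdefa\sup_{0\le s\le t}(1+s)^{\f32}X(s)$. Multiplying by the integrating factor $(1+t)^{c_0}$ with $c_0>\f32$ and integrating, the hypothesis $\Phi(t)\le K$ gives $\int_0^t X(s)^2 ds\le K^2\int_0^t(1+s)^{-3}ds\le K^2$, and reinsertion yields $X(t)\le (C_1+C_2K^2)(1+t)^{-\f32}$, i.e.\ $\Phi(t)\le C_1+C_2K^2$. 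Together with $X(0)<\infty$ (from the admissible condition \eqref{admissible}) and $X(t)\to 0$ at infinity (which follows from $D\in L^1$ and the uniform continuity of $X$), a standard continuity argument closes the bootstrap and gives $X(t)\le C(1+t)^{-\f32}$, equivalent to~\eqref{decayest2}.

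The principal technical obstacle is extracting low-frequency information about $(a,u,\theta)$ through the conservative variables $(a,\rho u,\rho E_1)$: the nonlinear corrections produced by Lemma~\ref{conlf} must be absorbed by the bootstrap, which forces the precise choice of splitting radius $r(t)\sim(1+t)^{-\f12}$ so that $|S(t)|\sim(1+t)^{-\f32}$ matches the heat-equation decay rate and the self-interaction term $\int_0^t X(s)^2 ds$ is integrable at infinity. Everything else amounts to matching the parabolic scaling between the Plancherel gain $r(t)^{-2}\sim(1+t)$ on high frequencies and the $L^1\!\to\!L^2$ gain $|S(t)|\sim(1+t)^{-\f32}$ on low frequencies.
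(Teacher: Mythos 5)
Your overall framework coincides with the paper's: the same splitting set $S(t)=\{|\xi|\le C(1+t)^{-1/2}\}$, the same low--frequency input (Lemma \ref{conlf}, transferred from $(a,\rho u,\rho E_1)$ to $(a,u,\theta)$ by the quadratic corrections you indicate), and the same dissipation inequality from Proposition \ref{energy_prop_main}; your remark that $\|\dot u\|_{L^2}^2\le C(\|\na^2u\|_{L^2}^2+\|\na a\|_{L^2}^2+\|\na\theta\|_{L^2}^2)$ via the momentum equation and the lower bound of $\rho$ is a legitimate way to spare $\dot u$ a low--frequency treatment (the paper instead estimates $\int_{S(t)}|\widehat{\dot u}|^2d\xi$ from the same equation). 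The genuine gap is in your final closure of the ODE $\f{d}{dt}X+\f{c_0}{1+t}X\le C(1+t)^{-5/2}\bigl(1+\int_0^tX^2ds\bigr)$. Your bootstrap only yields $\Phi(t)\le C_1+C_2\Phi(t)^2$ with $C_1,C_2$ determined by $M$ and the (large) initial data; such a quadratic inequality is vacuous unless $4C_1C_2<1$, i.e.\ unless a smallness condition holds, which is exactly what this large--solution setting does not provide, and a continuity argument cannot close it since $\Phi(0)=X(0)$ may already exceed the smaller root. Moreover the auxiliary fact you invoke to help the closure, $X(t)\to0$, does not follow from $D\in L^1$ and uniform continuity: $X$ contains the undissipated quantities $\|u\|_{L^2}^2$, $\|a\|_{L^2}^2$, $\|\theta\|_{L^2}^2$ (controlling these is precisely the purpose of the low--frequency analysis), so $X$ is not dominated by $D$; from $\f{d}{dt}X\le -A_7D$ one only gets that $X$ is non-increasing.

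The paper closes the argument by iteration rather than by a one-shot bootstrap, and this needs no smallness: first insert the crude bound $\int_0^tX^2ds\le Ct$ (from the uniform bound $X\le C$ of Proposition \ref{energy_prop_main}) to get $X(t)\le C(1+t)^{-1/2}$; re-insert this to get $\int_0^tX^2ds\le C\log(1+t)$ and hence $X(t)\le C(1+t)^{-3/2}\log(1+t)$; re-insert once more to get $\int_0^\infty X^2ds<\infty$ and the final $X(t)\le C(1+t)^{-3/2}$, which is \eqref{decayest2}. (A closure in your style can be made to work, but only by taking the supremum over $t\ge T_0$ with $T_0$ large, so that the coefficient in front of $\Phi^2$ becomes of size $(1+T_0)^{-2}$ and an actual small parameter appears --- which is in effect another way of performing the first iteration.) With your last step replaced by this iterative improvement, the rest of your argument reproduces the paper's proof.
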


\begin{proof} We separate the proof into several steps.
	
	{\it Step 1: The first sight of the convergence.} Thanks to \eqref{conlfsol} and the fact that $a,$ $u,$ $\theta$ and $\na u$ belong to $L^\infty((0,+\infty);L^2),$ we have
\begin{equation}\label{decayeq9}
\begin{split}
&\int_{S(t)} \big( |\hat{a}(\xi,t)|^2+|\widehat{\rho u}(\xi,t)|^2+|\widehat{\rho E_1}(\xi,t)|^2 \big) \,d\xi \\
&\leq C\big(\|a_0\|_{L^1}^2+ \|\rho_0 u_0\|_{L^1}^2+\|\rho_0\theta_0\|_{L^1}^2+\big\|\rho_0|u_0|^2\big\|_{L^1}^2 \big)(1+t)^{-\f32} \\
&\quad+C \big(\|u\|_{L^\infty(L^2)}^4+\|a\|_{L^\infty(L^2)}^4+\|\theta\|_{L^\infty(L^2)}^4+\|\na u\|_{L^\infty(L^2)}^4+\|u\|_{L^\infty(L^2)}^3\|\na u\|_{L^\infty(L^2)}^3\big) (1+t)^{-\f12}\\
&\leq C(1+t)^{-\f12}.
\end{split}
\end{equation}
Due to the fact $u=\rho u- a u$ and $\theta=\rho E_1-a\theta-\f12\rho|u|^2,$ we have
\beno
\int_{S(t)}  |\widehat{  u}(\xi,t)|^2  \,d\xi &\leq& \int_{S(t)}   |\widehat{\rho u}(\xi,t)|^2  \,d\xi+\int_{S(t)} |\widehat{a u}(\xi,t)|^2   \,d\xi\\
&\leq&C(1+t)^{-\f12}+C(1+t)^{-\f32}|\widehat{a u}(\xi,t)|_{L^\infty}^2\\
&\leq&C(1+t)^{-\f12},
\eeno
and
\beno
\int_{S(t)}  |\widehat{ \theta}(\xi,t)|^2  \,d\xi &\leq& \int_{S(t)}   |\widehat{\rho E_1}(\xi,t)|^2  \,d\xi+\int_{S(t)} |\widehat{a \theta}(\xi,t)|^2   \,d\xi+\int_{S(t)} \big|\widehat{\f12\rho|u|^2}(\xi,t)\big|^2   \,d\xi\\
&\leq&C(1+t)^{-\f12}+C(1+t)^{-\f32}\big(|\widehat{a \theta}(\xi,t)|_{L^\infty}^2+\big|\widehat{\rho|u|^2}(\xi,t)\big|_{L^\infty}^2\big)\\
&\leq&C(1+t)^{-\f12}.
\eeno

Next, because of $\rho \dot u=\mu\Delta u+(\lam+\mu)\na\div u-\na P$, following the same argument, we obtain
\beno
\int_{S(t)}  |\widehat{  \rho \dot {u} }(\xi,t)|^2  \,d\xi \leq  \int_{S(t)}  |\mu\widehat{ \Delta u}+(\lam+\mu)\widehat{\na\div u}-  \big(\widehat{\na (a\theta)}+\widehat{\na a} +\widehat{\na \theta}\big)(\xi,t)|^2  \,d\xi  \leq  C(1+t)^{-\f32},
\eeno
which implies that
$
\int_{S(t)}  |\widehat{   \dot {u} }(\xi,t)|^2  \,d\xi   \leq  C(1+t)^{-\f32}.
$

 We recall the dissipation inequality \eqref{energyest1_main}. Then by time-frequency splitting method, it is not difficult to derive that
 \beno
\f{d}{dt} X(t)+ \f{K}{1+t} X(t)&\leq& \f{1}{1+t}\int_{S(t)}\big( |\hat{a}(\xi,t)|^2+|\widehat{  u}(\xi,t)|^2+|\widehat{ \dot{u}}(\xi,t)|^2 \big) \,d\xi\\&\le&
  C(1+t)^{-\f32},
\eeno
which implies
\begin{equation}\label{informdecay}
X(t)\leq C(1+t)^{-\f12}.
\end{equation}
In particular, we have
\begin{equation}\label{decayeq10}
\|u\|_{L^2}+ \|a\|_{L^2}+ \|\theta\|_{L^2}+ \|\na u\|_{L^2} \leq C(1+t)^{-\f14}.
\end{equation}

{\it Step 2: Improving the decay estimate (I).}
We want to improve the decay estimate. Thanks to \eqref{conlfsol} and \eqref{decayeq10}, we improve the estimate for the low frequency part as follows
\beno
\int_{S(t)} \big( |\hat{a}(\xi,t)|^2+|\widehat{\rho u}(\xi,t)|^2+|\widehat{\rho E_1}(\xi,t)|^2 \big) \,d\xi  \leq    C(1+t)^{-\f32}\log(1+t).
\eeno
Now following the simiar argument used in the previous step, we conclude that
\beno \int_{S(t)} \big( |\hat{a}(\xi,t)|^2+|\widehat{u}(\xi,t)|^2+|\widehat{\theta}(\xi,t)|^2+|\widehat{\dot{u}}(\xi,t)|^2 \big) \,d\xi  \leq  C(1+t)^{-\f32}\log(1+t), \eeno
which implies that
\beno
\f{d}{dt} X(t)+ \f{K}{1+t} X(t)\leq C(1+t)^{-1} (1+t)^{-\f32}\log(1+t).
\eeno
We obtain that
\begin{equation}\label{decayeq11}
X(t)\leq C(1+t)^{-\f32}\log(1+t).
\end{equation}

{\it Step 3:  Improving the decay estimate (II). } Finally we try to get the sharp decay estimate. By \eqref{decayeq11}, we have
$\|u\|_{L^2}+ \|a\|_{L^2}+ \|\theta\|_{L^2}+ \|\na u\|_{L^2}  \leq C(1+t)^{-\f12}.$ Now
we may repeat the same process in the above to get that
\beno \int_{S(t)} \big( |\hat{a}(\xi,t)|^2+|\widehat{u}(\xi,t)|^2+|\widehat{\theta}(\xi,t)|^2+|\widehat{\dot{u}}(\xi,t)|^2 \big) \,d\xi  \leq  C(1+t)^{-\f32}, \eeno
which implies that
\beno
\f{d}{dt} X(t)+ \f{K}{1+t} X(t)\leq C(1+t)^{-1}(1+t)^{-\f32}.
\eeno
It is enough to derive \eqref{decayest2}. We ends the proof to the proposition.
\end{proof}

\medskip

\subsection{Proof of Theorem \ref{thm:main1}} Finally, we are in the position to prove Theorem \ref{thm:main1}.
\begin{proof}[Proof of Theorem \ref{thm:main1}]
We first note that the second results of the theorem are proved by Proposition  \ref{decayprop}. Due to Proposition \ref{energy_prop_main}, we have $a\in L^\infty((0,+\infty);H^1)\cap L^2((0,+\infty); \dot{H}^1),$ $u\in L^\infty((0,+\infty);H^2),$ $\na u\in L^2((0,+\infty); H^1\cap L^\infty),$ $\theta\in L^\infty((0,+\infty);H^1)$ and $\na \theta\in L^2((0,+\infty); H^1).$ Then the desired result is reduced to the proof of the propagation of $\na^2 a$.
	
We notice that  by Proposition \ref{prop_density}, $\na a\in L^\infty((0,\infty); L^p)$ with $p\in[2,6]$, which will be used frequently in what follows. Recall that
\begin{equation}\label{aeq}
\na^2a_t +\na^2( u\cdot\na a)+\na^2 a\,\div u+\na a\na\div u+\rho\na^2\div u=0,
\end{equation}
and $\div u=\f1{2\mu+\lam}(G+a\theta +a+\theta),$ then it is not difficult to derive that
\beno
\begin{split}
\f1{2}\f{d}{dt} \|\na^2 a\|_{L^2}^2 &+ \f1{2\mu+\lam} \int \rho T|\na^2 a|^2\,dx \leq \Big| \int \div u |\na^2 a|^2\,dx\Big| + \Big|\int \na a\na\div u \na^2 a\,dx \Big|\\
&+\f1{2\mu+\lam}\Big|\int \rho(\na^2 G +\rho\na^2\theta +\na a\na\theta)\na^2 a\,dx \Big|+ \Big|\int \na^2 (u\cdot \na \mathfrak{a})\na^2 a\,dx \Big|\\
&\overset{\text{def}}{=} D_1+D_2+D_3+D_4.
\end{split}
\eeno
By Cauchy-Schwartz inequality and Proposition \ref{prop_density}, we can estimate $D_i(i=1,2)$ easily by
\beno
D_1&\leq& \|\div u\|_{L^\infty} \|\na^2 a\|_{L^2}^2 \leq  \eta \|\na^2 a\|_{L^2}^2+C_\eta \|\na u\|_{L^\infty}^2\|\na^2 a\|_{L^2}^2,\\
D_2&\leq& C  \|\na a\|_{L^3} \|\na^2 u\|_{L^6} \|\na^2 a\|_{L^2}\leq  \eta \|\na^2 a\|_{L^2}^2 + C_\eta \|\na a\|_{L^3}^2 \|\na^2 u\|_{L^6}^2.
\eeno
For $D_3$, we have
\beno
D_3&\leq& C ( \|\na^2 G \|_{L^2}+\|\na^2 \theta\|_{L^2}) \|\na^2 a\|_{L^2}+C\|\na a\|_{L^3} \|\na\theta \|_{L^6} \|\na^2 a\|_{L^2}\\
&\leq &\eta \|\na^2 a\|_{L^2}^2 +C_\eta (\|\na^2 G \|_{L^2}^2+\|\na^2 \theta\|_{L^2}^2 +\|\na a\|_{L^3}^2 \|\na^2\theta \|_{L^2}^2).
\eeno
For $D_4$,  thanks to integration by parts,  we obtain that
\beno
\int u\cdot\na \na^2 a\, \na^2 a\,dx=-\f12\int \div u |\na^2 a|^2\,dx
\eeno
which implies that
\beno
D_4&\leq& \|\na a\|_{L^3} \|\na^2 u \|_{L^6} \|\na^2 a\|_{L^2}+\|\na u\|_{L^\infty} \|\na^2 a \|_{L^2}^2+\|\div u\|_{L^\infty} \|\na^2 a \|_{L^2}^2\\
&\leq &\eta \|\na^2 a\|_{L^2}^2 +C_\eta ( \|\na a\|_{L^3}^2 \|\na^2 u \|_{L^6}^2+\|\na u\|_{L^\infty}^2 \|\na^2 a \|_{L^2}^2).
\eeno

\medskip

  Combining all the estimates in the above, and using the lower bounds of density and temperature, we obtain that
\begin{equation}\label{aest1}
\begin{split}
&\f12\f{d}{dt} \|\na^2 a\|_{L^2}^2 + C \f1{2\mu+\lam} \|\na^2 a\|_{L^2}^2
\leq C\|\na u\|_{L^\infty}^2\|\na^2 a\|_{L^2}^2\\&\qquad+ C\Big( \|\na^2 G \|_{L^2}^2+\|\na^2 \theta\|_{L^2}^2 +\|\na a\|_{L^3}^2 \|\na^2\theta \|_{L^2}^2+\|\na a\|_{L^3}^2 \|\na^2 u\|_{L^6}^2 \Big).
\end{split}
\end{equation}
 Thanks to \eqref{eq:BD}, we have
\ben\label{uh3}
\begin{split}
&\| \na^2 G  \|_{ L^2}+\|\na^2\curl u\|_{L^2}  \leq \|\na(\rho \dot{u})\|_{ L^2}\leq C ( \|\na a \dot{u}\|_{ L^2}+\| \na \dot{u}\|_{ L^2}) \\
&\leq C ( \|\na a \|_{L^3}\|  \dot{u}\|_{ L^6}+\| \na \dot{u}\|_{ L^2})
  \leq C \| \na \dot{u}\|_{ L^2},\end{split}\een
from which together with $\eqref{controlofu1}_2$ and Proposition \ref{energy_prop_main} imply that
$$\int_0^\infty \Big( \|\na^2 G \|_{L^2}^2+\|\na^2 \theta\|_{L^2}^2 +\|\na a\|_{L^3}^2 \|\na^2\theta \|_{L^2}^2+\|\na a\|_{L^3}^2 \|\na^2 u\|_{L^6}^2 \Big) \,dt\le C.$$

By Gronwall's inequality, we have $ a\in L^\infty((0,+\infty); \dot{H}^2)\cap L^2((0,+\infty); \dot{H}^2)$, from which together with \eqref{uh3}, we deduce that $\na u\in L^2((0,\infty);H^2)$.  We ends the proof of Theorem \ref{thm:main1}.
\end{proof}

\setcounter{equation}{0}
  \section{Global-in-time Stability for system \eqref{cns}}
In this section,   we want to prove Theorem \ref{thm:main2}. The main idea of proof falls into two steps:
\begin{enumerate}
	\item By the local well-posedness for the system \eqref{cns},  we show that the perturbed solution will remain close to the reference solution for a long time if initially they are closer.
	\item With the convergence result implies that the reference solution is close to the  equilibrium after a long time, we can find a time $t_0$ such that $t_0$ is far away from the initial time, and at this moment the solution is close to the equilibrium. Then it is not difficult to prove the global existence in the perturbation framework.
	\end{enumerate}

 \subsection{Setup of the problem}
Let $(\bar{\rho}, \bar{u}, \bar T)$ be a global smooth solution for the \eqref{cns} with the initial data $(\bar{\rho_0},\bar{u_0},\bar{T_0}).$ And let $(\rho, u, T)$ be the solution for the \eqref{cns} associated the initial data $(\rho_0,u_0, T_0),$ which satisfies \eqref{assm1}.
We denote
  $h=\rho-\bar\rho$, $v=u-\bar u$ and $\cT=T-\bar T$ which satisfy that error equations as follows
  \beno
   (ERR)\left\{\begin{array}{l}
 \partial_t h+ (\bar u+v)\cdot \na h=-(h+\bar \rho)\div v-h\div \bar u,\\
\pa_t v -\f1\rho (\mu\Delta v+(\lambda+\mu)\na \div v)   =\f{W_1}{\rho},\\
\pa_t \cT-\f1\rho \Delta \cT=\f{W_2}{\rho}
\end{array}
\right.
  \eeno
  where
  \beno
 && -W_1=h\pa_t \bar u+h \bar u\cdot\na \bar u+\rho\bar u\cdot \na v+\na (h\cT+h\bar T+\bar \rho \cT)+\rho v\cdot\na \bar u+v\cdot \na v   , \\\
  && -W_2=h\pa_t \bar T+\rho\bar u\cdot \na \cT+ h\bar u\cdot \na T+ \rho T\div v+\rho\cT\div u+ h\cT\div \bar u+v\cdot \na \cT \\
  &&\qquad-  \f{\rho\mu}2|\na v+(\na v)'|^2-\rho\lambda(\div v)^2- \rho\mu(\na \bar u+(\na \bar u)')(\na v+(\na v)')-2\rho\lambda\div v \div \bar u.
  \eeno

  \bigskip

 Before proving the stability, we  give the estimate to the term  $\f{W_1}{\rho}$ and $\f{W_2}{\rho}$. We have

  \begin{lem}\label{lem:G-F}
  Assume that $s>\f32$. Let $(\bar{\rho},\bar{u}, \bar T)$ be the smooth solution for \eqref{cns} satisfying \eqref{assm}. There exists a $\eps_0$ such that for any $0<\eps\leq \eps_0$, if
  \beno
  \|h, v, \cT\|_{  L^\infty_T(H^s)}\leq \eps^{\f12},
  \eeno
 then there holds
  \beno
  \|\f{W_1}{\rho}, \f{W_2}{\rho} \|_{   H^{s-1} }\leq C_1\big(\|h\|  _{ H^s }+\|  v \|_{   H^s } +\|\cT\|_{   H^s } \big)
  \big(1+\|h\|_{   H^{s} }+\|  v \|_{  H^{s+1}}+\|  \cT \|_{  H^{s+1}} \big) ,
    \eeno
where $C_1$ is a positive constant depending only on $\mu,$ $\lam,$ and $C$ in \eqref{assm}.
  \end{lem}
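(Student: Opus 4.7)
The plan is to bound $W_1$ and $W_2$ directly in $H^{s-1}(\R^3)$ by Sobolev product estimates and then absorb the prefactor $\rho^{-1}$. Our basic tools, valid for $s>\f32$ in $\R^3$, are the continuous embedding $H^s \hookrightarrow L^\infty$, the algebra property $\|fg\|_{H^s} \le C\|f\|_{H^s}\|g\|_{H^s}$, and the tame product estimate
\[
\|fg\|_{H^{s-1}} \le C\bigl(\|f\|_{L^\infty}\|g\|_{H^{s-1}} + \|f\|_{H^{s-1}}\|g\|_{L^\infty}\bigr).
\]
The smallness hypothesis $\|h\|_{L^\infty_T H^s}\le \eps^{\f12}$ combined with the positivity of $\bar\rho$ implied by $\|1/\bar\rho\|_{H^s}\le C$ in \eqref{assm} ensures $\rho = \bar\rho + h \ge c_0 > 0$ uniformly once $\eps\le\eps_0$ is small. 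A Neumann-series expansion of $\rho^{-1}$ in powers of $h/\bar\rho$ then gives $\rho^{-1}-1\in L^\infty_T H^s$ with uniformly bounded norm, so $\|f/\rho\|_{H^{s-1}} \le C\|f\|_{H^{s-1}}$ for any $f\in H^{s-1}$. It thus suffices to estimate $W_1$ and $W_2$ in $H^{s-1}$.

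For $-W_1$ I split the terms into two groups. The first consists of factors linear in $(h,v,\cT)$ multiplied by a smooth coefficient in $L^\infty_T H^s$: $h\pa_t\bar u$, $h\bar u\cdot\na\bar u$, $\rho\bar u\cdot\na v$, $\rho v\cdot\na\bar u$, $\na(h\bar T)$ and $\na(\bar\rho\cT)$. Each is bounded by $C\|(h,v,\cT)\|_{H^s}(1+\|h\|_{H^s})$, with $C$ absorbing the coefficient norms from \eqref{assm}. The genuinely quadratic pieces $\na(h\cT)$ and $v\cdot\na v$ produce $C\|h\|_{H^s}\|\cT\|_{H^s}$ and $C\|v\|_{H^s}^2$ by the algebra property, which fit the right-hand side of the lemma since $\|(h,v,\cT)\|_{H^s}\le C$ under the smallness assumption and one factor can be majorized by $1+\|v\|_{H^{s+1}}$ or $1+\|\cT\|_{H^{s+1}}$.

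The analogous analysis for $-W_2$ handles the linear-type terms $h\pa_t\bar T$, $\rho\bar u\cdot\na\cT$, $h\bar u\cdot\na T$, $\rho T\div v$, $\rho\cT\div u$, $h\cT\div\bar u$, $v\cdot\na\cT$ and the linear viscous cross-terms $\rho\mu(\na\bar u + (\na\bar u)')(\na v + (\na v)')$ and $\rho\lambda\div v\,\div\bar u$, each bounded by $C\|(h,v,\cT)\|_{H^s}(1+\|h\|_{H^s})$. The crucial step is the purely quadratic viscous dissipation terms $\f{\rho\mu}{2}|\na v + (\na v)'|^2$ and $\rho\lambda(\div v)^2$. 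Applying the tame estimate together with $H^s\hookrightarrow L^\infty$ yields
\[
\||\na v|^2\|_{H^{s-1}} \le C\|\na v\|_{L^\infty}\|\na v\|_{H^{s-1}} \le C\|v\|_{H^{s+1}}\|v\|_{H^s},
\]
and this is exactly the mechanism that produces the factor $\|v\|_{H^{s+1}}$ on the right of the stated inequality; multiplication by $\rho$ costs only a harmless $(1+\|h\|_{H^s})$ factor. Summing all contributions and invoking the $\rho^{-1}$ bound from the first paragraph yields the claim.

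The main obstacle is bookkeeping rather than any single hard estimate: one must carefully track which products force the $H^{s+1}$ norm of $v$ or $\cT$ and which only require the $H^s$ norm. The extra derivative in $\|v\|_{H^{s+1}}$ (and by analogy $\|\cT\|_{H^{s+1}}$, which arises if one treats the heat-conduction-squared terms with the same tame splitting or simply majorizes) enters exclusively through the $L^\infty$ bound on $\na v$ needed to control the squared-gradient dissipation in $W_2$, since $s+1>\f52$ is required for $H^s\hookrightarrow W^{1,\infty}$. All remaining terms remain at the $H^s$ level, and the smallness $\|h,v,\cT\|_{L^\infty_T H^s}\le\eps^{\f12}$ is consumed only in securing the positive lower bound of $\rho$.
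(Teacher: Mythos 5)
Your proposal is correct and follows essentially the same route as the paper, whose proof consists precisely of invoking the tame product estimate $\|fg\|_{H^{s-1}}\lesssim \|f\|_{L^\infty}\|g\|_{H^{s-1}}+\|f\|_{H^{s-1}}\|g\|_{L^\infty}$ together with the embedding $H^s\subset L^\infty$ for $s>\f32$. Your write-up merely makes explicit the bookkeeping (the composition/Neumann-series bound for $1/\rho$ and the fact that the squared-gradient terms in $W_2$ are what force the $H^{s+1}$ norms), which the paper leaves to the reader.
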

 \begin{proof}
 The proof is simply deduced by the standard estimate
 $$\|fg\|_{H^s}\leq \|f\|_{L^\infty}\|g\|_{H^s}+\|f\|_{H^s}\|g\|_{L^\infty},$$
 and $L^\infty\subset H^s$ for $s>\f32.$
 \end{proof}

\subsection{Long time existence of $\mbox{(ERR)}$ }
We want to prove that if the initial data of  $\mbox{(ERR)}$ is small, then its associated solution will be still small during a long time interval. More precisely, we have the following proposition.
 \begin{prop}\label{prop:ST}
Let $(\bar \rho, \bar u,\bar T)$ associated with initial data $(\bar \rho_0, \bar u_0,\bar T_0)$ be a global solution of \eqref{cns}  satisfying \eqref{assm}. Given an $\eps>0,$ if the initial data of $\mbox{(ERR)}$ are determined by the following inequality
  \ben\label{assm2}
\|h_0\|  _{ H^s }+\|  v_0 \|_{   H^s } +\|\cT_0\|_{   H^s }  \leq \eps,
  \een
  then there exists a constant $\delta$ independent of $\eps,$ such that for any $t\in [0, \delta |\ln \eps|]$, there holds
    \beno
 \|h(t)\|  _{ H^s }+\|  v(t) \|_{   H^s } +\|\cT(t)\|_{   H^s }   \leq \eps^{\f12}.
     \eeno
 \end{prop}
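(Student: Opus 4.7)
\medskip

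\noindent\textbf{Proof plan for Proposition 3.1.} The plan is a standard continuation/bootstrap argument combining parabolic energy estimates for $v$ and $\cT$ with a transport estimate for $h$, closed by the source bound in Lemma 3.1. Define
\[
E(t) \;\eqdefa\; \|h(t)\|_{H^s}^2 + \|v(t)\|_{H^s}^2 + \|\cT(t)\|_{H^s}^2,\qquad
D(t) \;\eqdefa\; \|\nabla v(t)\|_{H^s}^2 + \|\nabla \cT(t)\|_{H^s}^2.
\]
By local well-posedness there is a maximal time $T^*\in(0,+\infty]$ on which the continuity assumption
$\|(h,v,\cT)(t)\|_{H^s}\le \eps^{1/2}$ holds; the goal is to improve this bound to $\eps^{3/4}$ as long as $t\le \delta|\ln\eps|$, which pushes $T^*$ past $\delta|\ln\eps|$.

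First I would fix $\eps_0$ so that the Sobolev embedding $H^s\hookrightarrow L^\infty$ together with the lower bound on $\bar\rho$ (from \eqref{assm}) yields $\rho=\bar\rho+h\ge c_0>0$ on $[0,T^*]$, and so that the hypothesis of Lemma~3.1 is satisfied. Then I would perform an $H^s$ energy estimate on each equation of $\mathrm{(ERR)}$:
\begin{itemize}
\item For the parabolic equations for $v$ and $\cT$, apply $\Lambda^s$, test against $\Lambda^s v$ and $\Lambda^s\cT$, and integrate by parts to obtain
\[
\frac{d}{dt}\bigl(\|v\|_{H^s}^2+\|\cT\|_{H^s}^2\bigr)+c_1 D(t)\le C\bigl\|\tfrac{W_1}{\rho},\tfrac{W_2}{\rho}\bigr\|_{H^{s-1}}\sqrt{D(t)}+C_{\bar\rho,\bar u,\bar T}\bigl(\|v\|_{H^s}^2+\|\cT\|_{H^s}^2\bigr),
\]
the commutator between $\Lambda^s$ and $1/\rho$ being handled via the Kato–Ponce estimate and the uniform $H^s$ bound on $1/\rho$ deduced from $\|h\|_{H^s}\le\eps^{1/2}$ and \eqref{assm}.
\item For the transport equation for $h$, apply $\Lambda^s$ and use the standard commutator estimate together with $\|\bar u+v\|_{H^s}\le C$ to get
\[
\frac{d}{dt}\|h\|_{H^s}^2\le C\|h\|_{H^s}^2 + C\|h\|_{H^s}\bigl(\|\bar\rho\|_{H^s}+\|h\|_{H^s}\bigr)\|\nabla v\|_{H^s} + C\|h\|_{H^s}^2\|\nabla\bar u\|_{H^s}.
\]
\end{itemize}
Summing and invoking Lemma~3.1 together with Young's inequality produces, on $[0,T^*]$,
\[
\frac{d}{dt}E(t)+c_1 D(t)\;\le\; C\,E(t)+C\,E(t)\bigl(1+\sqrt{E(t)+D(t)}\bigr)\sqrt{E(t)+D(t)}.
\]
Under the continuity bound $\sqrt{E(t)}\le \eps^{1/2}$, the nonlinear term $C\,E\sqrt{D}$ is bounded by $C\eps^{1/2} D+C\eps^{-1/2}E^2\le C\eps^{1/2} D+C E$, so choosing $\eps_0$ small enough that $C\eps_0^{1/2}\le c_1/2$ absorbs the dissipation and yields the clean linear inequality
\[
\frac{d}{dt}E(t)\;\le\; \Lambda\, E(t)\qquad\text{for some constant }\Lambda=\Lambda(C,\bar\rho,\bar u,\bar T).
\]

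Gronwall then gives $E(t)\le E(0)e^{\Lambda t}\le \eps^2 e^{\Lambda t}$. Choosing $\delta\eqdefa 1/(2\Lambda)$, independent of $\eps$, we get $E(t)\le \eps^{3/2}$ on $[0,\delta|\ln\eps|]\cap[0,T^*]$, hence $\|(h,v,\cT)(t)\|_{H^s}\le \eps^{3/4}<\eps^{1/2}$, which strictly improves the continuation assumption and forces $T^*\ge \delta|\ln\eps|$, proving the proposition.

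The main obstacle I anticipate is the commutator and composition bookkeeping behind the $H^s$ estimate for the transport equation and for the $1/\rho$-weighted parabolic operator: one must make sure every nonlinear source term can be absorbed either into the parabolic dissipation $c_1 D$ or into the linear $\Lambda E$ growth, without producing factors that blow up as $\eps\to 0$. This is exactly why Lemma~3.1 is stated with the first factor controlled by $\|(h,v,\cT)\|_{H^s}$ alone (which provides the small prefactor) while the rougher second factor is allowed to depend on $\|\nabla v\|_{H^s}$ and $\|\nabla\cT\|_{H^s}$ (which is bought back by $\sqrt D$). Once that Young-type splitting goes through cleanly, the rest is pure Gronwall bookkeeping.
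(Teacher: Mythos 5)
Your proposal is correct and follows essentially the same route as the paper: a continuity (bootstrap) argument with threshold $\eps^{1/2}$, $H^s$ transport estimates for $h$ and $\tfrac1\rho$-weighted parabolic energy estimates for $v,\cT$, the source bound of Lemma \ref{lem:G-F}, absorption of the $H^{s+1}$-bearing terms into the dissipation via the bootstrap smallness, and Gronwall giving $E(t)\le \eps^2 e^{\Lambda t}$, hence survival up to time $\delta|\ln\eps|$. The only cosmetic differences (commuting directly with $1/\rho$ rather than splitting off $1/\bar\rho$, and the harmless $E$ versus $\sqrt{E}$ prefactor in your intermediate display, which your Young-type absorption handles either way) do not change the argument.
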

\begin{proof} We use the continuity argument to prove the desired result. Let $\Gamma$ be the maximum time such that for any $t\in [0,\Gamma],$ there holds
$$ \|h(t)\|  _{ H^s }+\|  v(t) \|_{   H^s } +\|\cT(t)\|_{   H^s }   \leq \eps^{\f12}.$$
The existence of $\Gamma$ can be obtained by the local well-posedness for the system. The the proof of Proposition \ref{prop:ST} is reduced to prove that $\Gamma \geq \delta |\ln \eps|$ where $\delta>0$ is a constant independent of $\eps.$

By energy estimates, we have
\beno
\f{d}{dt} \|h\|_{H^s}^2&\leq& \|(h+\bar \rho)\div v+h\div \bar u\|_{H^s}^2+ (1+ \|\div(\bar u+v)\|_{L^\infty})\|h\|_{H^s}^2+  \|[ \langle D \rangle^s,(\bar u+v)\cdot \na  ] h\|_{L^2}^2\\
&\leq& C(1+\|v\|_{H^{s+1}}^2) \|h\|_{H^s}^2 .
\eeno
For $v, \cT$, we have for any positive $\delta$, it holds
\beno
&&\f{d}{dt} \|v\|_{H^s}^2 -  \langle\f1\rho (\mu\Delta v+(\lambda+\mu)\na \div v) , v \rangle_{H^s}\\
&&\qquad\leq  C_\delta\big(\|h\|  _{ H^s }+\|  v \|_{   H^s } +\|\cT\|_{   H^s } \big) ^2
  \big(1+\|h\|_{   H^{s} }+\|  v \|_{  H^{s+1}}+\|  \cT \|_{  H^{s+1}} \big)^2+\delta\|  v \|^2_{  H^{s+1}} ,\\
  &&\f{d}{dt} \|\cT\|_{H^s}^2 -  \langle\f1\rho \Delta \cT  , \cT \rangle_{H^s}\\
&&\qquad\leq  C_\delta \big(\|h\|  _{ H^s }+\|  v \|_{   H^s } +\|\cT\|_{   H^s } \big)^2
  \big(1+\|h\|_{   H^{s} }+\|  v \|_{  H^{s+1}}+\|  \cT \|_{  H^{s+1}} \big)^2+\delta\| \cT \|^2_{  H^{s+1}}.
\eeno

All we left is that to prove
\beno
&&-  \langle\f1\rho (\mu\Delta v+(\lambda+\mu)\na \div v) , v \rangle_{H^s}\geq \|\na v\|_{H^s}^2-\|v\|_{H^{s}}\|v\|_{H^{s+1}}- \|h\|_{H^s} \|v\|^2_{H^{s+1}},
\eeno
and
\beno
&& -  \langle\f1\rho \Delta \cT  , \cT \rangle_{H^s}\geq c \|\na \cT\|_{H^s}^2-\|\cT\|_{H^{s}}\|\cT\|_{H^{s+1}}- \|h\|_{H^s} \|\cT\|^2_{H^{s+1}}.
\eeno
On one hand, we have
\beno
-  \langle \f1\rho \Delta \cT  , \cT \rangle_{H^s}  &=& \langle (\f 1{\bar \rho}- \f1\rho )\Delta \cT  , \cT \rangle_{H^s} - \langle \f1{\bar \rho} \Delta \cT  , \cT \rangle_{H^s} \geq - \langle \f1{\bar \rho} \Delta \cT  , \cT \rangle_{H^s} -\|\f h{\bar\rho}\Delta \cT\|_{H^{s-1}}\|\cT\|_{H^{s+1}}\\
&\geq&   \langle \f1{\bar \rho} \na \cT  ,\na \cT \rangle_{H^s}+ \langle \na \f1{\bar \rho} \na \cT  ,\na \cT \rangle_{H^s} - \|h\|_{H^s} \|\cT\|^2_{H^{s+1}}\\
&\geq&  c\|\na \cT\|_{H^s}^2 + \langle [\langle D \rangle^s,\f1{\bar \rho} ]\na \cT  ,\langle D \rangle^s \na \cT \rangle_{L^2}+ \langle \na \f1{\bar \rho} \na \cT  ,\na \cT \rangle_{H^s} - \|h\|_{H^s} \|\cT\|^2_{H^{s+1}}\\
&\geq&  c\|\na \cT\|_{H^s}^2-\|\cT\|_{H^{s}}\|\cT\|_{H^{s+1}}- \|h\|_{H^s} \|\cT\|^2_{H^{s+1}}.
\eeno
By the same argument, we have
\beno
&&-  \langle\f1\rho (\mu\Delta v+(\lambda+\mu)\na \div v) , v \rangle_{H^s}\geq \|\na v\|_{H^s}^2-\|v\|_{H^{s}}\|v\|_{H^{s+1}}- \|h\|_{H^s} \|v\|^2_{H^{s+1}}.
\eeno

Combining all the estimates, and according the definition of $\Gamma,$ for any $t\in [0,\Gamma]$ there holds
\beno
\f{d}{dt}( \|h\|_{H^s}^2 +\|v\|_{H^s}^2 +\|\cT\|_{H^s}^2)+c(\|v\|_{H^{s+1}}^2+\|\cT\|_{H^{s+1}}^2)\leq C( \|h\|_{H^s}^2 +\|v\|_{H^s}^2 +\|\cT\|_{H^s}^2),
\eeno
\beno
 \|h(t)\|_{H^s}^2 +\|v(t)\|_{H^s}^2 +\|\cT(t)\|_{H^s}^2 \leq \|h_0\|_{H^s}^2 +\|v_0\|_{H^s}^2 +\|\cT_0\|_{H^s}^2+C\int_0^t( \|h\|_{H^s}^2 +\|v\|_{H^s}^2 +\|\cT\|_{H^s}^2 )\,d\tau,
\eeno
which implies that $\Gamma \geq \delta |\ln \eps|$ for a suitable $\delta$ independent of $\eps.$ Then the proof is completed.

\end{proof}

  \subsection{Proof of Theorem \ref{thm:main2}.}
  Now we are in a position to prove Theorem \ref{thm:main2}.
 \begin{proof}[Proof of Theorem \ref{thm:main2}]
 First, thanks to Theorem \ref{thm:main1}, we can choose $t_0=\f12(1+|\delta\ln\eps|)$ such that
  \beno
   \|(\bar{\rho}-1)(t_0)\|_{H^1}+\|\bar{u}(t_0)\|_{H^1}+ \|(\bar {T}-1)(t_0)\|_{H^1}\lesssim  (1+|\delta\ln\eps|)^{-\f34}.
  \eeno
Recall that $\rho-1=h+(\bar{\rho}-1),$ $u=v+\bar{u},$ and $T-1=\cT+\bar T-1$ then from Proposition \ref{prop:ST},  we derive that
  \beno
   \|(\rho-1)(t_0)\|_{H^1}+\| u(t_0)\|_{H^1}+ \|( T-1)(t_0)\|_{H^1}\lesssim  \eps^{\f12}+(1+|\delta\ln\eps|)^{-\f34}\lesssim (1+|\delta\ln\eps|)^{-\f34}.
  \eeno
This means that at the time $t_0$, the system \eqref{cns} is in the close-to-equilibrium regime. Then thanks to the results in \cite{MN1, MN2}, we obtain the global existence for $(\rho-1, u, T-1).$ Moreover due to the definition of $\Gamma,$ we conclude that
\beno
\|h\|  _{L^\infty_t( H^s) }+\|  v\|_{   L^\infty_t( H^s)} +\|\cT\|_{ L^\infty_t( H^s) }  \lesssim \min\{(1+\delta|\ln\eps|)^{-\f34}, (1+t)^{-\f34}+\epsilon\}.  \eeno
It completes the proof of Theorem \ref{thm:main2}.
  \end{proof}

\section*{Acknowledgement}
  Lingbing He is supported by NSF of China under Grant 11771236. Jingchi Huang is supported by NSF of China under Grant 11701585 and Science and Technology Planning Project of Guangdong under Grant 2017A030310047. Chao Wang is supported by NSF of China under Grant 11701016.

\end{document}